\newcommand{\bxi}{\textit{{\boldmath$\xi$}}}
\newcommand{\bu}{{\bf u} }
\newcommand{\p}{\partial}
\newcommand{\Og}{\Omega}
\newcommand{\fl}[2]{\frac{#1}{#2}}
\newcommand{\dt}{\delta}
\newcommand{\gm}{\gamma}
\newcommand{\nn}{\nonumber}
\newcommand{\ap}{\alpha}
\newcommand{\veps}{\varepsilon}
\newcommand{\Dt}{\Delta}
\newcommand{\be}{\begin{equation}}
\newcommand{\ee}{\end{equation}}
\newcommand{\ba}{\begin{array}}
\newcommand{\ea}{\end{array}}
\newcommand{\bea}{\begin{eqnarray}}
\newcommand{\eea}{\end{eqnarray}}
\newcommand{\beas}{\begin{eqnarray*}}
\newcommand{\eeas}{\end{eqnarray*}}
\newtheorem{remark}{Remark}[section]
\newtheorem{lemma}{Lemma}[section]
\newtheorem{theorem}{Theorem}[section]
\newtheorem{corollary}{Corollary}[section]
\newcommand{\bx}{{\bf x} }
\newcommand{\bm}{{\bf m}}
\newcommand{\bk}{{\bf k}}
\definecolor{ForestGreen}{rgb}{0.0, 0.5, 0.0}
\title{Numerical approximations for the tempered fractional Laplacian: Error analysis and applications}
\author{
Siwei Duo\thanks{Department of Mathematics, University of South Carolina, Columbia, SC 29208 (Email: duo@mail.sc.edu)} \  \  and 
Yanzhi Zhang\thanks{Department of Mathematics and Statistics, Missouri University of Science and Technology, Rolla, MO 65409-0020 (Email: zhangyanz@mst.edu) }}
\date{ }
\begin{document}
\maketitle
\vspace{-5mm}
\begin{abstract}
In this paper, we propose an accurate finite difference method to discretize the $d$-dimensional (for $d \ge 1$) tempered integral fractional Laplacian and apply it to study the tempered effects on the solution of problems arising in various applications.  
Compared to other existing methods, our method has higher accuracy and simpler implementation.
Our numerical method has an accuracy of ${\mathcal O}(h^\veps)$, for $u \in C^{0, \,\ap + \veps} (\bar{\Og})$ if $\ap < 1$ (or $u \in C^{1, \,\ap - 1 + \veps} (\bar{\Og})$ if $\ap \ge 1$) with $\veps > 0$,  suggesting the minimum consistency conditions. 
The  accuracy  can be improved to ${\mathcal O}(h^2)$, for $u \in C^{2, \,\ap + \veps} (\bar{\Og})$ if $\ap < 1$ (or $u \in C^{3, \,\ap - 1 + \veps} (\bar{\Og})$ if $\ap \ge 1$). 
Numerical experiments confirm our analytical results and provide insights in solving the tempered fractional Poisson problem. 
It suggests that to achieve the second order of accuracy, our method only requires the solution $u \in C^{1,1}(\bar{\Og})$ for any $\ap \in (0, 2)$. 
Moreover, if the solution of  tempered fractional Poisson problems satisfies $u \in C^{p, s}(\bar{\Og})$ for $p = 0, 1$ and $s\in(0, 1]$, our method has the accuracy of ${\mathcal O}(h^{p+s})$.
Since our method yields a (multilevel) Toeplitz {stiffness} matrix, one can design fast algorithms via the fast Fourier transform for efficient simulations. 
Finally, we apply it together with fast algorithms to study the tempered effects on the solutions of various  tempered fractional PDEs, including the Allen--Cahn equation and Gray--Scott equations. 

\smallskip
{\bf Key words. } Tempered integral fractional Laplacian, finite difference methods, error estimates,   fractional Allen--Cahn equation,  fractional Gray--Scott equations.
\end{abstract}

\section{Introduction}
\setcounter{equation}{0}
\label{section1}

The anomalous diffusion of L\'evy motion, in contrast to the normal diffusion of Brownian motion,  has gained a lot of attention in the last couple of decades \cite{Laskin2000, Carr2003, Cartea2007, Javanainen2013, Duo2016, Kirkpatrick2016}. 
Recently, the coexistence and transition of anomalous to normal diffusion was observed in many fields, ranging from biology \cite{Javanainen2013}, finance \cite{Carr2002,Carr2003}, turbulence \cite{Dubrulle1998}, to geophysics \cite{Meerschaert2008, Zhang2012}.  
To model such a phenomenon, several approaches were proposed in the literature, such as truncating a stable L\'evy process \cite{Mantegna1994,Koponen1995}, adding a high-order power law factor \cite{Sokolov2004}, including a nonlinear friction term \cite{Chechkin2005}, and exponentially tempering a stable  L\'evy process \cite{Cartea2007, Rosinski2007}. 
In the tempered models \cite{Cartea2007, Rosinski2007}, a damping term is introduced to exponentially temper the power-law decay of the L{\'e}vy process. 
Hence, they can capture the transition phenomena of anomalous diffusion in the early stage and then normal diffusion in the late stage.  
It shows that exponential tempering offers many technical advantages over other approaches \cite{Cartea2007, Rosinski2007, Meerschaert2008, Baeumer2010, Zhang2012}. 
However, the current mathematical and numerical studies of the tempered models still remain limited. 
In this study,  we propose  an efficient and accurate finite difference method to discretize the $d$-dimensional ($d \ge 1$) tempered  fractional Laplacian. 

The tempered fractional Laplacian $(-\Dt + \lambda)^{\fl{\ap}{2}}$ is defined in hypersingular integral form \cite{Meerschaert2008, Zhang0017-2, Sun0018}:
\bea\label{TFL}
(-\Dt + \lambda)^{\fl{\ap}{2}} u(\bx) = c_d^{\ap, \lambda}\,{\rm P. V.} \int_{{\mathbb R}^d}\fl{u({\bf x}) - u(\bf y)}{e^{\lambda|{\bx} - {\bf y}|}|{\bf x} - {\bf y}|^{d+\ap}} d{\bf y}, \qquad\mbox{for} \ \  \ap \in  (0, 2),\quad \lambda \ge 0,
\eea
for $d = 1, 2$, or $3$, where P.V. stands for the principal value, and $|\bx- {\bf y}|$ denotes the Euclidean distance
between points $\bx$ and ${\bf y}$.
The normalization constant $c_d^{\ap, \lambda}$ is defined as
\begin{eqnarray*}
	c_d^{\alpha,\lambda}= \fl{1}{2 \sqrt{\pi^d}}\left\{
	\begin{array}{ll}
\displaystyle
		2^\ap\alpha \Gamma\Big(\fl{d+\alpha}{2}\Big)/\Gamma\Big(1-\fl{\alpha}{2}\Big), &\quad \text{if} \ \ \lambda = 0 \ \ \text{or} \ \ \alpha = 1, \\ 
\displaystyle
		\Gamma\Big(\fl{d}{2}\Big)/|\Gamma(-\alpha)|,& \quad \text{otherwise}	\end{array}
	\right.
\end{eqnarray*}
with $\Gamma(\cdot)$ being the Gamma function. 
Probabilistically, the operator (\ref{TFL}) represents an infinitesimal generator of a tempered symmetric $\alpha$-stable L{\'e}vy process \cite{Meerschaert2008, Baeumer2010, Rosinski2007}.
As shown in \cite{Meerschaert2008, Rosinski2007},  the tempered $\alpha$-stable L{\'e}vy process can approximate a traditional $\alpha$-stable L\'evy process over a short distance, while over a long distance it behaves like  Brownian motion. 
Hence, the tempered fractional Laplacian $(-\Dt + \lambda)^{\fl{\ap}{2}}$ couples normal and anomalous diffusion in a seamless way. 
In the special case of $\lambda = 0$, the operator (\ref{TFL}) collapses to the fractional Laplacian $(-\Dt)^{\fl{\ap}{2}}$, which has been extensively studied (see \cite{Acosta2017, Duo2018, Duo2019, Minden0018, Tang0019} and references therein). 
Note that many other tempered fractional derivatives exist in the literature, such as the tempered Riemann--Liouville derivatives \cite{Cartea2007, Baeumer2010}, tempered Caputo derivatives \cite{Cartea2007}, and tempered Riesz derivatives, but {\it in this study we focus on the tempered integral fractional Laplacian  (\ref{TFL})}. 

So far,  numerical methods for the tempered fractional Laplacian $(-\Dt + \lambda)^{\fl{\ap}{2}}$ still remain very limited. 
In one-dimensional  (i.e., $d = 1$) cases, a finite difference collocation method is presented in \cite{Zhang0017-2}  to solve the tempered fractional Poisson equation, while later a Riesz basis Galerkin method is proposed  in \cite{Zhang0017}. 
Recently, a finite difference method based on the bilinear interpolation is proposed in \cite{Sun0018} to discretize the two-dimensional (i.e., $d = 2$) operator (\ref{TFL}).  
To the best of our knowledge, the current numerical methods for the tempered  fractional Laplacian suffer two main limitations:  low-dimensional (i.e., $d = 1, 2$) discretization and  $\alpha$-dependent accuracy, e.g.,  $\mathcal{O}(h^{2-\alpha})$, with $h$ a small mesh size. 
In this paper, we propose accurate finite difference methods to discretize the general $d$-dimensional ($d\geq 1$) tempered integral fractional Laplacian (\ref{TFL}). 
The main contributions of this study include: 
\vspace{-2mm}
\begin{itemize}\itemsep -2pt
\item[(i)]  New finite difference methods are proposed to discretize $d$-dimensional (for $d= 1, 2, 3$) tempered fractional Laplacian $(-\Dt+\lambda)^{\fl{\ap}{2}}$. 
In contrast to other methods, our schemes take similar framework for any dimension $d \ge 1$, making both error estimates and computer implementation much easier. 
\item[(ii)] Error analysis in this study provides a tighter consistency condition.
Moreover, we prove the second-order accuracy with much less regularity requirements. 
In the special case of $\lambda = 0$, our analysis improves the error estimates in \cite{Duo2018} for the fractional Laplacian $(-\Dt)^{\fl{\ap}{2}}$. 
\item[(iii)] Our method can achieve the accuracy of ${\mathcal O}(h^2)$ for any $\ap\in (0,  2)$, in contrast to an $\ap$-dependent accuracy ${\mathcal O}(h^{2-\ap})$ of other existing methods.
Moreover, it always yields a (multilevel) Toeplitz stiffness matrix for $d \ge 1$, enabling efficient implementations via the fast Fourier transform (FFT).
\end{itemize}
\vspace{-2mm}
This paper is organized as follows. 
In Sec. \ref{section2}, we first introduce the general framework of our method and then present the detailed schemes for one-, two-, and three-dimensional cases.
In Sec.  \ref{section3},  numerical  analysis is presented to study the local truncation errors. 
In Sec.  \ref{section4},  we present numerical experiments  to test the performance of our method in approximating the operator and in solving fractional Poisson problems.  
Two tempered fractional problems, i.e., Allen--Cahn equation and  Gray--Scott equations, are presented in Sec. \ref{section5} to study the tempered effects of the operator.
Finally, we make conclusions in Sec. \ref{section6}. 

\section{Numerical methods}
\setcounter{equation}{0}
\label{section2}

The main numerical challenges in discretizing the tempered fractional Laplacian $(-\Dt + \lambda)^{\fl{\ap}{2}}$ are from its nonlocality and strong singular kernel function. 
As mentioned previously,  the existing numerical methods are limited to the one- and two-dimensional cases \cite{Zhang0017-2, Sun0018}, and no reports can be found for three-dimensional tempered fractional Laplacian. 
Moreover, these methods have an $\ap$-dependent (i.e., rate of ($2-\ap$)) accuracy. 
In this section, we propose a new and accurate finite difference method for any $d$-dimensional ($d \ge 1$) tempered fractional Laplacian  (\ref{TFL}). 
%

Let $\Og \subset {\mathbb R}^d$ be a bounded domain, and $\Og^c = {\mathbb R}^d\backslash\Og$ denotes its complement. 
Here, we consider the tempered fractional Laplacian (\ref{TFL}) on domain $\Og$ (i.e., $\bx \in \Og$) with extended homogeneous Dirichlet boundary conditions on $\Og^c$  (i.e., $u(\bx) = 0$ for $\bx\in\Og^c$).
Let $\xi^{(i)}=  |x_i - y_i|$ for $1 \le i\le d$, and define a new vector $\bxi = \big(\xi^{(1)}, \, \xi^{(2)}, \, \cdots, \, \xi^{(d)}\big)$. 
For an integer $M > 0$,  we denote the index sets 
\beas
\varkappa_M = \{(m_1, \, m_2, \, \cdots, \, m_d) \ | \ 0 \le m_i \le M, \ \mbox{for} \ 1 \le i \le d \},\qquad
\widetilde{\varkappa}_M = \varkappa_M\backslash (0,\, 0, \, \cdots\, 0).
\eeas
Then, the tempered fractional Laplacian (\ref{TFL}) can be reformulated into a weighted integral as:
\bea \label{TfL2}
(-\Dt + \lambda)^{\fl{\ap}{2}}u(\bx) = -c_d^{\ap, \lambda} \int_{{\mathbb R}_+^d} \varphi_{d, \gm}(\bx, \bxi)\,w_{\lambda, \gm}(\bxi)\,d\bxi, 
\eea
where ${\mathbb R}_+^d = [0, \infty)^d$, the weight function $w_{\lambda,\gm} = |\bxi|^{\gm-(d+\ap)}\exp(-\lambda|\bxi|)$, and 
\bea \label{Psi}
\varphi_{d,\gm}(\bx,\bxi) = \fl{1}{|\bxi|^\gm}\Big(\sum_{{\bf m} \in \varkappa_1} u(\bx + (-1)^\bm \circ \bxi) - 2^d u(\bx)\Big)
\eea
with $\gm \in (\ap, 2]$ being a splitting parameter,  the vector  $(-1)^{\bf m} = \big((-1)^{m_1}, \, (-1)^{m_2}, \, \cdots, \, (-1)^{m_d} \big)$, and ${\bf a} \circ {\bf b}$ denoting the Hadamard product of ${\bf a}$ and ${\bf b}$.  
In other words, we split the strong singular kernel function of the tempered fractional Laplacian  (\ref{TFL}) into two weaker parts, i.e., $|\bxi|^{-\gm}$ in function $\varphi_{d, \gm}$ and $|\bxi|^{\gm-(d+\ap)}$ in the weight $w_{\lambda, \gm}$.
Note that one key idea that distinguishes our method  from other existing  methods is to split the kernel function and  approximate the resulting integral by the composite weighted trapezoidal rules.  
Here, how to choose the splitting parameter $\gm$ and where to include the damping term $e^{-\lambda|\bxi|}$ play a crucial role in determining the accuracy of the finite difference methods; see more discussion  in  Remarks \ref{remark2-1} and \ref{remark3-1}. 

Denote $\Og = (a_1, b_1)\, \times\, \cdots\, \times (a_d, b_d)$. 
Choose $L = \max_{1 \leq  i  \leq d} \{b_i - a_i\}$ and define mesh size $h = L/N$ with integer $N > 0$. 
We then define the grid points  $\bxi_{{\bf k}} = \big(\xi^{(1)}_{k_1},\, \xi^{(2)}_{k_2}, \, \cdots,\, \xi^{(d)}_{k_d}\big)$ for ${\bf k} \in \varkappa_N$, with  $\xi^{(i)}_{k_i} = k_ih$. For  each $\bk \in \varkappa_{N-1}$, define the element
 ${I}_{\bk} = [k_1 h, \, (k_1+1)h]\, \times \, [k_2 h, \, (k_2+1)h] \,\times\, \cdots \,  \times [k_d h, \,(k_d+1)h]$. 
Then, the tempered fractional Laplacian in (\ref{TfL2}) can be rewritten as: 
\bea\label{split1}
(-\Dt + \lambda)^{\fl{\ap}{2}}u(\bx) = -c_d^{\ap, \lambda}\bigg(\sum _{\bk\in\varkappa_{N-1}} \int _{{I}_{\bf k}} \varphi_{d,\gm}(\bx,\bxi)w_{\lambda,\gm}(\bxi) d\bxi + \int _{D_d}\varphi_{d,\gm}(\bx,\bxi)w_{\lambda,\gm}(\bxi) d\bxi\bigg),
\eea
where $D_d = {\mathbb R}_+^d\backslash[0, L]^d$. 
It is easy to verify that if ${\bf x}\in \Omega$ and $\bxi \in D_d$,  the point $(\bx + (-1)^\bm \circ \bxi)\in \Omega^c$ for $\bm \in \varkappa_1$, and thus the extended homogeneous Dirichlet boundary conditions imply that $u(\bx + (-1)^\bm\circ \bxi) = 0$.
This fact simplifies $\varphi_{d, \gm}(\bx, \bxi) = -2^d |\bxi|^{-\gm} u(\bx)$ and reduces the second integral in (\ref{split1})  to: 
\begin{eqnarray}\label{Int2}
\int_{D_d}\varphi_{d,\gm}(\bx,\bxi) w_{\lambda, \gm}(\bxi)\,d\bxi 
= -\fl{u({\bf x}) }{2^d}\int_{D_d} e^{-\lambda|\bxi|}|\textit{{\boldmath$\xi$}}|^{-(d+\alpha)} d\bxi.
\eea
For the integrals over element $I_\bk$, we divide our discussion into two parts. 
For $|{\bk}| \neq 0$ (i.e., $\bk \in \widetilde{\varkappa}_{N-1}$), we apply the $d$-dimensional weighted trapezoidal rule and obtain: 
\begin{eqnarray}\label{I-2D}
\int _{I_\bk}\varphi_{d,\gm}(\bx,\bxi) w_{\lambda, \gm}(\bxi)\,d\bxi
\approx \fl{1}{2^d}\Big(\sum_{{\bf n} \in \varkappa_1} \varphi_{d,\gm}\big(\bx, \bxi_{\bk + {\bf n}}\big)\Big)\int _{I_{\bk}}e^{-\lambda|\bxi|} |\bxi|^{\gm -(d+\ap)}\,d\bxi.
\end{eqnarray}

For $|\bk|  = 0$,  the integral over $I_{\bf 0}$ can be similarly approximated in form of (\ref{I-2D}), but 
the term $\varphi_{d, \gm}(\bx, \bxi_{\bf 0})$ should be replaced with $\lim_{\bxi\to {\bf 0}}\varphi_{d, \gm}(\bx, \bxi)$, since a singularity occurs at $\bxi_{\bf 0} = {\bf 0}$.  
Assuming that this limit exists, then it depends on the choice of splitting parameter $\gm \in (\ap, 2]$. 
If $\gm = 2$, it is approximated as: 
\beas
\label{lim2}
\lim_{\bxi \rightarrow {\bf 0}}\varphi_{d,2}(\bx,\bxi) \approx 
\Big(\fl{2^d}{d}-1\Big)\sum_{|{\bf n}| =1}\varphi_{d,2}(\bx,\bxi_{\bf n})
- \sum_{\substack{{\bf n} \in \widetilde{\varkappa}_1, \,|{\bf n}| \neq 1}} \varphi_{d,2}(\bx,\bxi_{\bf n}),  
\eeas
while for any $\gm \in (\ap,  2)$, this limit is zero, as 
$\displaystyle \lim_{\bxi \rightarrow {\bf 0}}\varphi_{d,\gm}(\bx,\bxi) = \lim_{\bxi \rightarrow {\bf 0}}(\varphi_{d,2}(\bx,\bxi) |\bxi|^{2-\gamma})$. 
Combining the above limits and  (\ref{I-2D})  with $|\bk| = 0$ yields the approximation: 
\bea\label{I00-final}
\int _{I_{{\bf 0}}}
\varphi_{d,\gm}(\bx,\bxi) w_{\lambda,\gm}(\bxi)\,d\bxi \approx 
\fl{1}{2^d}\bigg(\sum_{{\bf n} \in \widetilde{\varkappa}_1} c_{\bf n}^\gm\, \varphi_{d,\gm}\big(\bx, \bxi_{\bf n}\big)\bigg) \int _{I_{{\bf 0}}}e^{-\lambda|\bxi|} |\bxi|^{\gm-(d+\ap)}\,d\bxi,   
\eea
where  $c_{\bf n}^\gm = 1$ for $\gm \in (\ap, 2)$ and any ${\bf n}\in\widetilde{\varkappa}_1$; if $\gm = 2$,   $c_{\bf n}^\gm = 2^d/d$ for $|{\bf n}| = 1$, and $c_{\bf n}^\gm = 0$ for $|{\bf n}| \neq  1$. 

Combining (\ref{split1})--(\ref{I00-final}) and rearranging  terms, we obtain the following approximation to the $d$-dimensional tempered fractional Laplacian $(-\Dt + \lambda)^{\fl{\ap}{2}}$:
\bea \label{scheme0}  
&&(-\Delta+\lambda)_{h,\gm}^{\fl{\alpha}{2}} u(\bx) = 
-\frac{c_{d}^{\ap,\lambda}}{2^d}\bigg[\,\sum_{\bk\in\widetilde{\varkappa}_{N-1}}
\varphi_{d,\gm}\big({\bf x},\,\bxi_{\bk}\big)\int _{T_{\bk}}e^{-\lambda|\bxi|}|\bxi|^{\gm-(d+\ap)}d\bxi - 4^du(\bx)\int_{D_d} e^{-\lambda|\bxi|}|\bxi|^{-(d+\ap)} d\bxi\nn\\
&&\hspace{1cm}+\left\lfloor\fl{\gm}{2}\right\rfloor\bigg(\Big(\frac{2^d}{d}-1\Big)\sum_{|{\bf k}| = 1}\varphi_{d, \gm}(\bx,\bxi_{\bf k})
- \sum_{{\bf k}\in\widetilde{\varkappa}_1, |{\bf k}|\neq 1} \varphi_{d, \gm}(\bx,\bxi_{\bf k})\bigg)\int _{I_{\bf 0}} e^{-\lambda|\bxi|}|\bxi|^{\gm-(d+\ap)}d\bxi\bigg],
\eea
for $\bx \in\Og$, where $\lfloor \cdot \rfloor$ represents the floor function, and  
${T}_{\bk} = \big(\bigcup_{{\bf n}\in {\varkappa}_1}{I}_{\bk-{\bf n}} \big) \bigcap (0, L)^d$ (for $\bk\in\varkappa_{N-1}$)
denotes as the collection of elements associating with $\bxi_{\bk}$, i.e., elements that have $\bxi_{\bk}$ as a vertex.

\begin{remark}[Effect of the damping term] \label{remark2-1}
We emphasize that in order to obtain the optimal accuracy for all $\ap \in (0, 2)$,  the damping term $e^{-\lambda |\bxi|}$ must be included in the weight function as written in (\ref{TfL2}) and then eventually retained in the integral. 
\end{remark}

\begin{figure}[htb!]
\centerline{
(a) \includegraphics[height=4.66cm,width=6.460cm]{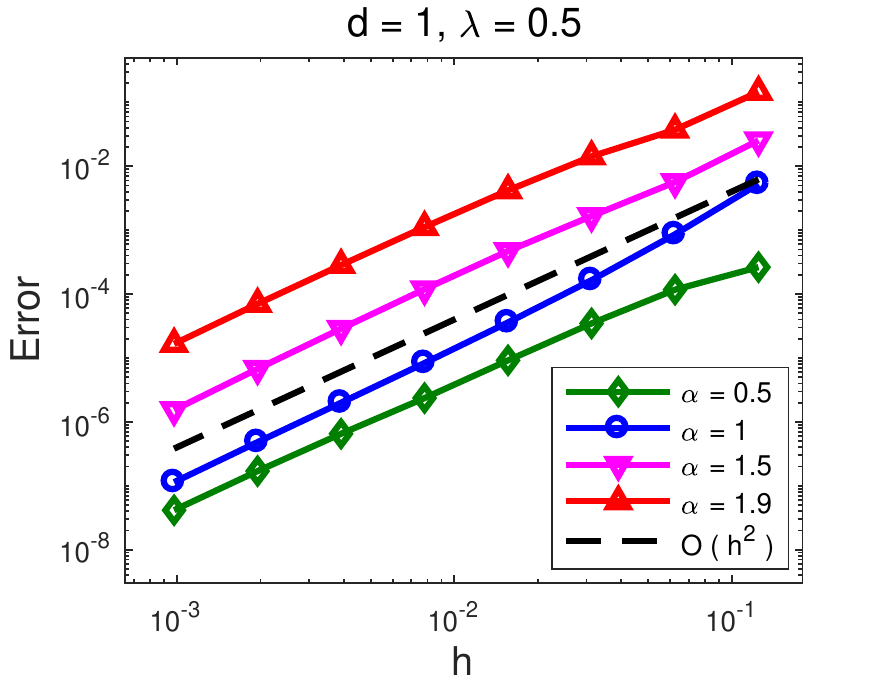}
(b) \includegraphics[height=4.66cm,width=6.460cm]{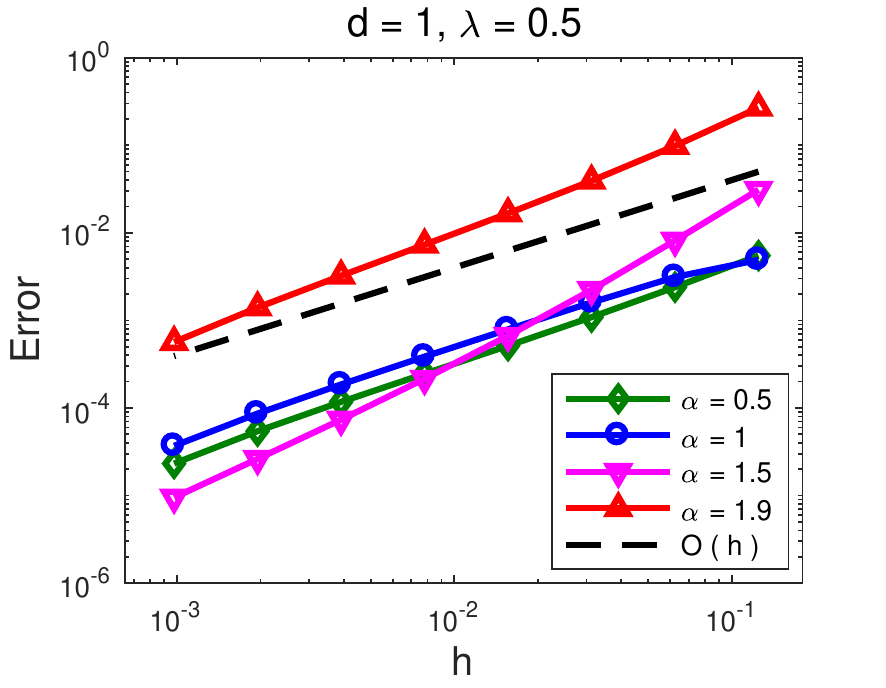}}
\vspace{-1mm}
\centerline{
(c) \includegraphics[height=4.66cm,width=6.460cm]{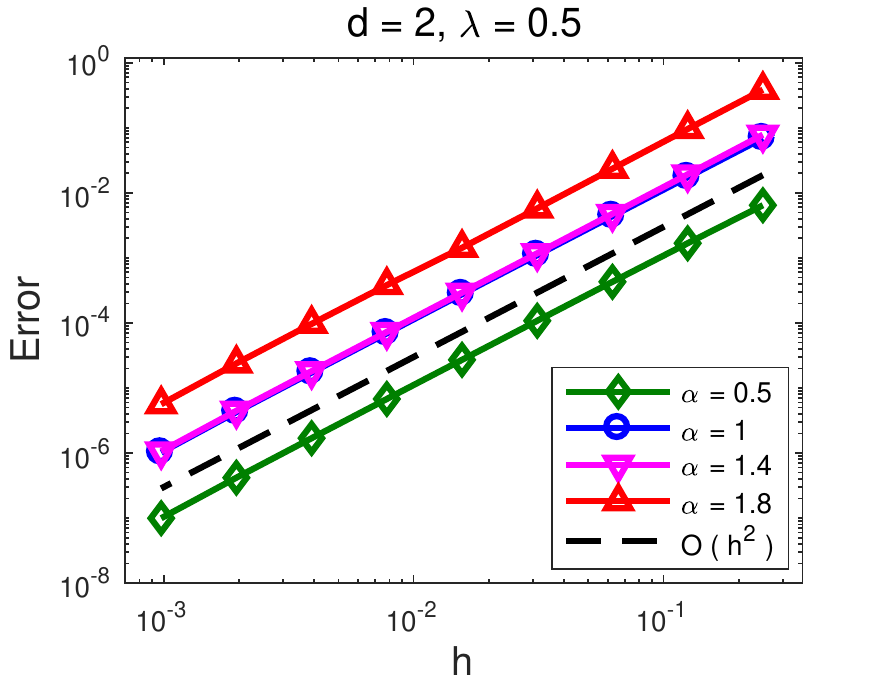}
(d) \includegraphics[height=4.66cm,width=6.460cm]{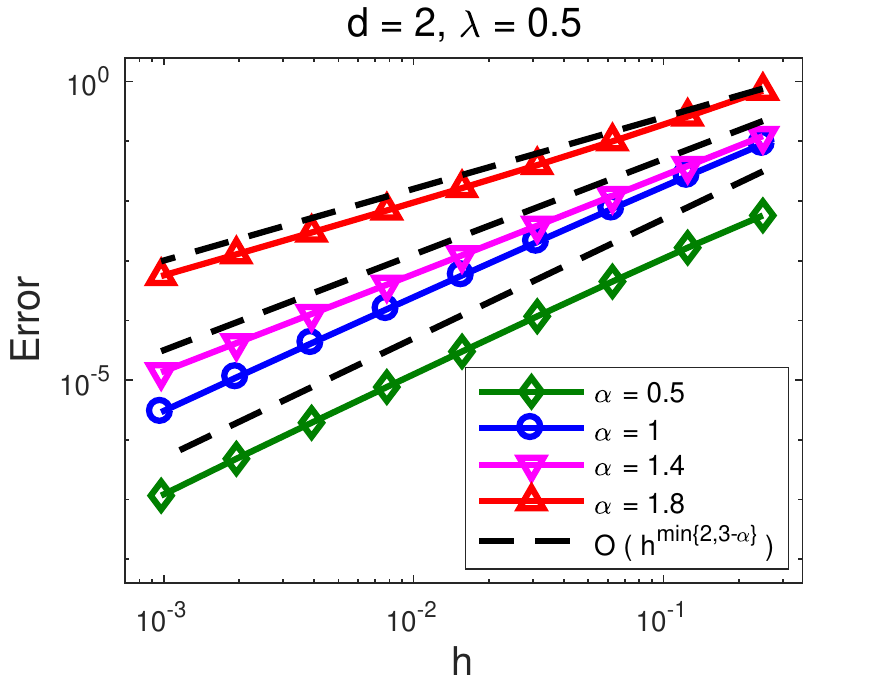}}
\caption{Comparison of numerical errors in approximating $(-\Dt+\lambda)^{\fl{\ap}{2}}u$.
(a)\&(c):  Including $e^{-\lambda|\bxi|}$ in the weight function as that in (\ref{TfL2});
(b)\&(d):  excluding $e^{-\lambda|\bxi|}$ from the weight function.}\label{Fig1}
\end{figure}
To demonstrate it,   we compare in Fig. \ref{Fig1} numerical errors of two methods obtained by, respectively, including and excluding  the damping term from the weight function, where we choose 
$u = (1-x^2)_+^{3+\fl{\ap}{2}}$, $\lambda = 0.5$ and $\gm = 2$.
Fig. \ref{Fig1} (a)\&(c) show that including the damping term $e^{-\lambda|\bxi|}$ in the weight function leads to the second-order accuracy, independent of  $\ap$ and $d$.  
By contrast,  excluding it from the weight function yields a method with an $\ap$-dependent accuracy; see Fig. \ref{Fig1} (b)\&(d). 
Moreover, the numerical errors in Fig. \ref{Fig1} (b) \& (d) are much larger than those in Fig. \ref{Fig1} (a)\&(c)  with the same parameters.

\smallskip
Combining (\ref{scheme0}) with (\ref{Psi}),  we can obtain the finite difference approximation to the tempered integral fractional Laplacian $(-\Dt + \lambda)^{\fl{\ap}{2}}$. 
Without loss of generality, we assume that $N_x = N$, and choose $N_y$ and $N_z$  be the smallest integer such that  $a_2 + N_yh \geq b_2$ and $a_3 + N_zh \geq b_3$. 
Define the grid points $x_i = a_1 + ih$ for $0 \le i \le N_x$,  \,$y_j = a_2 + jh$ for $0 \le j \le N_y$, and $z_k = a_3 + kh$ for $0 \le k \le N_z$. 
For the convenience of the reader, we will summarize the finite difference scheme for $d = 1,2$ and $3$ in Sec. \ref{section2-1}--\ref{section2-3}, respectively.

\subsection{One-dimensional cases}
\label{section2-1}

In one-dimensional (i.e., $d = 1$) cases,
notice the definition of $\varphi_{1, \gm}$ in (\ref{Psi}), i.e., 
$\varphi_{1, \gm}(x, \xi) = [u(x-\xi) - 2u(x) + u(x+\xi)]/\xi^\gm$, 
which can be viewed as a (weighted) central difference approximation to operator $\p_{xx}$. 
Substituting it into  (\ref{scheme0}) and rearranging terms, we obtain the finite difference approximation to the 1D tempered integral fractional Laplacian $(-\Dt + \lambda)^{\fl{\ap}{2}}$ as
 \bea\label{fLh-1D}
(-\Delta+\lambda)_{h,\gamma}^{\fl{\alpha}{2}}u_{i}  = -c_{1}^{\alpha,\lambda}
\bigg(a_{0} u_{i}+\sum_{m=0}^{i-1}a_{m}u_{i-m}
+\sum_{m=0}^{N_x-1-i} a_{m}u_{i+m}\bigg), \qquad 1 \le i \le N_x-1.\quad 
\eea
Due to its nonlocality, the  approximation of $(-\Dt + \lambda)^{\fl{\ap}{2}}$ at point $x_i$ depends on all points in domain $\Og$. 
The coefficients $a_m$, depending on the splitting parameter $\gm \in (\ap, 2]$, are given by 
\bea\label{am}
a_{m} = \left\{\begin{array}{ll} 
\displaystyle -2\lambda^{\alpha}\breve{\Gamma}(-\alpha,\lambda L) -2\sum_{1 \le n \le N_x} a_{n}, &  \mbox{if \  $m = 0$},\\
\displaystyle \fl{\lambda^{-\nu}}{2 h^\gm} \Big(2\Gamma(\nu) - \breve{\Gamma}(\nu, 2\lambda h) - \breve{\Gamma}(\nu,  \lambda h)\Big), &\mbox{if \ $m = 1$ and $\gm = 2$},\\
\displaystyle  \fl{\lambda^{-\nu}}{2m^\gm h^\gm} \Big(\breve{\Gamma}(\nu, (m-1)\lambda h) - \breve{\Gamma}(\nu,  (m+1)\lambda h)\Big), \quad &\mbox{otherwise}\\
\end{array}\right. 
\eea
with $\nu = \gamma-\ap$, and $\breve{\Gamma}(a, b)$ represents the upper incomplete Gamma function.
In the special case of $\lambda = 0$,  (\ref{fLh-1D}) reduces to the finite difference method for the 1D fractional Laplacian $(-\Dt)^{\fl{\ap}{2}}$ in \cite{Duo2018, Duo2015}. 
 
Denote the vector ${\bf u} = (u_1, u_2, \cdots, u_{N_x-1})^T$.  
We can write (\ref{fLh-1D})--(\ref{am}) into matrix-vector form of $(-\Dt + \lambda)_{h, \gm}^{\fl{\ap}{2}}{\bf u} = A_1{\bf u}$, where $A_1$ is a positive definite symmetric Toeplitz matrix with its entries satisfying
\beas
A_{i,j} = A_{i+1, j+1} = a_{|i-j|}, \qquad  1\le i, j \le N_x-1.
\eeas
Hence, the matrix-vector multiplication $A_1{\bf u}$ can be efficiently computed via the one-dimensional FFT with computational costs of 
${\mathcal O}((N_x-1)\log(N_x-1))$ and storage of ${\mathcal O}(N_x-1)$.

\subsection{Two-dimensional cases}
\label{section2-2}
In two-dimensional (i.e., $d = 2$) cases, we denote $u_{ij} = u(x_i, y_j)$. 
Setting $d = 2$ in (\ref{scheme0}) and noticing the definition of $\varphi_{2,\gm}$ in (\ref{Psi}), we obtain the finite difference approximation to the 2D tempered integral fractional Laplacian $(-\Dt + \lambda)^{\fl{\ap}{2}}$ as: for $1 \le i \le N_x-1$ and $1 \le j \le N_y-1$, 
 \bea\label{fLh-2D}
&&(-\Delta+\lambda)_{h,\gamma}^{\fl{\alpha}{2}}u_{ij}  = -c_{2}^{\alpha,\lambda}
\bigg[a_{00} u_{ij}+\sum_{m=0}^{i-1}\bigg(\sum_{\substack{n=0 \\ m+n\neq 0}}^{j-1}a_{mn}u_{(i-m)(j-n)} + \sum_{n=1}^{N_y-1-j} a_{mn} u_{(i-m)(j+n)} \bigg)\qquad \  \  \nn\\
&&\hspace{3.5cm}  +\sum_{m=0}^{N_x-1-i} \bigg(\sum_{\substack{n = 0 \\ m+n\neq 0}}^{j-1}a_{mn}u_{(i+m)(j-n)}+\sum_{n=1}^{N_y-1-j}a_{mn}u_{(i+m)(j+n)}\bigg)\bigg]. \eea
Denote $\sigma(m, n)$ as the number of zeros among integers $m$ and $n$. 
Then the coefficients $a_{mn}$ are given by: 
\beas\label{amn}
a_{mn} = \left\{\begin{array}{ll}  
\displaystyle \frac{2^{\sigma(m, n)}}{4|\bxi_{mn}|^{\gamma}}
\bigg(\int_{T_{mn}} e^{-\lambda|\bxi|}|\bxi|^{\gamma-(2+\alpha)}\,d\bxi
+ \bar{c}_{mn}\left\lfloor\fl{\gm}{2}\right\rfloor\int_{I_{00}} e^{-\lambda|\bxi|}|\bxi|^{\gamma-(2+\alpha)}\,d\bxi\bigg), & \mbox{if $m + n > 0$}, \\
\displaystyle -2\sum_{i=1}^{N} \big(a_{i0} + a_{0i}\big)
-4\sum_{i, j =1}^{N} a_{ij}
-4 \int_{D_2} e^{-\lambda|\bxi|} |\bxi|^{-(2+\ap)}\,d\xi d\eta, & \mbox{if $m=n=0$}.
\end{array}\right.
\eeas
where  the constant $\bar{c}_{01} = \bar{c}_{10} = -\bar{c}_{11} = 1$, and  $\bar{c}_{mn} \equiv 0$ for other $m, n > 0$.  

Denote the vector ${\bf u}_{x, j} = (u_{1, j}, u_{2, j},\ldots,u_{N_x-1, j})$ for $1\le j \le N_y-1$, and the block vector $
{\bf u} = ({\bf u}_{x,1}, {\bf u}_{x,2},\,\ldots,{\bf u}_{x,\,N_y-1})^T$. 
We can write the scheme (\ref{fLh-2D}) into matrix-vector form $(-\Delta + \lambda)^{\fl{\alpha}{2}}_{h,\gamma}{\bf u} = A_2{\bf u}$. 
Here, the matrix $A_2$ is a symmetric block Toeplitz matrix, defined as
\bea\label{A-2D}
{{A_2}}= 
\left(
\begin{array}{cccccc}
A_{x,0} & A_{x,1} & \ldots &  A_{x,N_y-3} & A_{x,N_y-2}  \\
A_{x,1}& A_{x,0} & A_{x,1}  &  \cdots & A_{x,N_y-3}  \\
\vdots & \ddots  & \ddots & \ddots & \vdots \\
A_{x,N_y-3} &  \ldots  & A_{x,1} & A_{x,0} & A_{x,1} \\
A_{x,N_y-2} & A_{{x},N_y-3}  & \ldots & A_{x,1} & A_{x,0}
\end{array}
\right) _{M\times M}
\eea
with $M = (N_x-1)(N_y-1)$ being the total number of unknowns, and each block $A_{x, j}$ (for $0 \le j \le N_y-2$) is a symmetric Toeplitz matrix with its entries defined as 
\beas
\big(A_{x, j}\big)_{i, k} = \big(A_{x, j}\big)_{i+1, k+1} = a_{|i-k|j}, \qquad 1 \le i, k \le N_x-1. 
\eeas
Since $A_2$ is a block-Toeplitz-Toeplitz-block matrix,  the matrix-vector multiplication $A_2{\bf u}$ can be computed efficiently via the two-dimensional FFT.

\subsection{Three-dimensional cases}
\label{section2-3}

In three-dimensional (i.e., $d = 3$) cases, we denote $u_{ijk} = u(x_i, y_j, z_k)$.  
Setting $d = 3$ in (\ref{scheme0}) and substituting $\varphi_{3,\gm}$ into it,  we obtain the finite difference approximation to the 3D tempered  integral fractional Laplacian $(-\Dt + \lambda)^{\fl{\ap}{2}}$ as: 
\begin{eqnarray}\label{fLh-3D}
&&(-\Delta+\lambda)_{h,\gamma}^{\frac{\alpha}{2}}u_{ijk}
= -c_{3}^{\alpha,\lambda}\bigg[a_{000}\,u_{ijk} + 
\sum_{p=0, 1}\bigg(\sum_{m\in S_i^p}a_{m00}\,u_{[i+(-1)^pm]jk} 
+ \sum_{n\in S_j^p}a_{0n0}u_{i[j+(-1)^pn]k} \nn\\
&&\hspace{1.6cm} + \sum_{s\in S_k^p}a_{00s}u_{ij[k+(-1)^ps]} \bigg)
+\sum_{p, q=0, 1}\bigg(\sum_{s\in S_k^q}\sum_{n\in S_j^p}a_{0ns}u_{i[j+(-1)^pn][k+(-1)^qs]}  \nn\\
&&\hspace{1.6cm}+ \sum_{s\in S_k^q}\sum_{m\in S_i^p}a_{m0s}u_{[i+(-1)^pm]j[k+(-1)^qs]}
+ \sum_{n\in S_j^q}\sum_{m\in S_i^p}a_{mn0}u_{[i+(-1)^pm][j+(-1)^qn]\,k} \bigg) \nn\\
&&\hspace{1.6cm}+ \sum_{p,q,r=0}^{1}\sum_{s\in S_k^r}\sum_{n\in S_j^q}\sum_{m\in S_i^p}a_{mns}u_{[i+(-1)^pm][j+(-1)^qn][k+(-1)^rs]} \bigg], 
\end{eqnarray}
for $1 \le i \le N_x -1$, $1 \le j \le N_y -1$, and $1 \le k \le N_z-1$, where the index sets 
\beas
&&S_i^p = \big\{l\,|\, l\in{\mathbb N}, \  1 \le i + (-1)^pl \le N_x-1\big\},\\
&&S_j^p = \big\{l\,|\, l\in{\mathbb N}, \  1 \le j + (-1)^pl \le N_y-1\big\}, \\
&&S_k^p = \big\{l\,|\, l\in{\mathbb N}, \  1 \le k + (-1)^pl \le N_z-1\big\},\qquad p = 0, \,\mbox{or} \ 1.
\eeas
The coefficients $a_{mns}$ are given by: 
\beas
a_{mns} = \left\{\begin{array}{ll}
\displaystyle -2\sum_{i=1}^{N}\big(a_{i00} + a_{0i0} + a_{00i}\big) 
-4\sum_{i,j=1}^{N}\big(a_{0ij} + a_{i0j} + a_{ij0}\big) & \\
\displaystyle \hspace{4.5cm}- 8\sum_{i,j,k=1}^{N}a_{ijk} -8\int_{D_3} e^{-\lambda|\bxi|}|\bxi|^{-(3+\ap)}\,d\bxi, & \mbox{if $ m=n=s=0$},\\
\displaystyle \frac{2^{\sigma(m, n, s)}}{8|\bxi_{mns}|^{\gamma}}\bigg(\int_{T_{mns}} e^{-\lambda|\bxi|}|\bxi|^{\gamma-(3+\alpha)}\,d\bxi - \bar{c}_{mns}\left\lfloor \fl{\gm}{2}\right\rfloor\int_{I_{000}}e^{-\lambda|\bxi|} |\bxi|^{\gamma-(3+\alpha)}\,d\bxi\bigg), & \mbox{otherwise},
\end{array}\right.
\eeas
where $\sigma(m, n, s)$ denotes the number zeros among integers $m, n$ and $s$, and $N = \max\{N_x,  N_y, N_z\}$. 
For $m,n,s \leq 1$, the constant $\bar{c}_{mns} = -\fl{5}{3}$ if $\sigma(m, n, s) = 2$;  $\bar{c}_{mns} = 1$ if $\sigma(m, n, s) = 0$ or $1$. In other cases, i.e., if one of $m,n,s>1$, $\bar{c}_{mns} = 0$.

Denote the vector ${\bf u}_{x, j, k} = \big({u}_{1jk},  \,\ldots,\, u_{(N_x-1)jk}\big)$,  the block vector ${\bf u}_{x,y,k} = \big({\bf u}_{x,1,k},  \,\ldots,\,{\bf u}_{x, N_y-1, k}\big)$, and then ${\bf u} = \big({\bf u}_{x,y,1},   \, \ldots,\, {\bf u}_{x,y,N_z-1}\big)^T$. 
The matrix-vector form of (\ref{fLh-3D}) is given by $(-\Dt + \lambda)_{h, \gm}^{\fl{\ap}{2}}{\bf u} = A_3 {\bf u}$, where the matrix $A_3$ is defined as: 
\begin{eqnarray*}\label{A-3D}
{{{A}_3}}= 
\left(
\begin{array}{cccccc}
A_{x,y,0} & A_{x,y,1} & \ldots &  A_{x,y,N_z-3} & A_{x,y,N_z-2}  \\
A_{x,y,1}& A_{x,y,0} & A_{x,y,1}  &  \cdots & A_{x,y,N_z-3}  \\
\vdots & \ddots  & \ddots & \ddots & \vdots \\
A_{x,y,N_z-3} &  \ldots  & A_{x,y,1} & A_{x,y,0} & A_{x,y,1} \\
A_{x,y, N_z-2} & A_{x,y,N_z-3}  & \ldots & A_{x,y,1} & A_{x,y,0}
\end{array}
\right).
\end{eqnarray*}
For $k = 0,1,\dots,N_z-2$, the block matrix
\begin{eqnarray*}\label{Ak}
{{A}}_{x,y,k} = 
\left(
\begin{array}{cccccc}
A_{x,0,k} & A_{x,1,k} & \ldots &  A_{x,N_y-3,k} & A_{x,N_y-2,k}  \\
A_{x,1,k} & A_{x,0,k} & A_{x,1,k}  &  \cdots & A_{x,N_y-3,k}  \\
\vdots & \ddots  & \ddots & \ddots & \vdots \\
A_{x,N_y-3,k} &  \ldots  & A_{x,1,k} & A_{x,0,k} & A_{x,1,k} \\
A_{x,N_y-2,k} & A_{x,N_y-3,k}  & \ldots & A_{x,1,k} & A_{x,0,k}
\end{array}
\right), \nn
\end{eqnarray*}
and each block $A_{x, j, k}$  is a symmetric Toeplitz matrix with its entries defined as 
\beas
\big(A_{x, j, k}\big)_{i, l} = \big(A_{x, j}\big)_{i+1, l+1} = a_{|i-l|jk}, \qquad 1 \le i, l \le N_x-1. 
\eeas
%
Similarly, the matrix-vector product $A_3\bu$ can be efficiently computed via the three-dimensional FFT. 


\section{Error analysis}
\setcounter{equation}{0}
\label{section3}

In this section, we study local truncation errors of our method in approximating the tempered fractional Laplacian $(-\Dt + \lambda)^{\fl{\ap}{2}}$. 
Without loss of generality, we will provide detailed error estimates for the one-dimensional cases, and the generalization to two and three dimensions can be done  by following similar lines. 
For  $k \in {\mathbb N}$ and $0 \le s \le 1$, we denote $C^{k, s}({\mathbb R^d})$ as the space that consists of function $u: {\mathbb R}^d \to {\mathbb R}$ with continuous derivatives of order less or equal to $k$, and their $k$-th (partial) derivatives are uniformly H\"older continuous with exponent $s$. 
Let $w: [a, b] \to {\mathbb R}$ be a non-negative integrable function. 
We define
\begin{eqnarray}\label{Tht0}
\Theta_{[a,\,b]}^{(m)}(x) =
\int_{a}^{x}w(\xi)\,\frac{(x-{\xi})^{m}}{m!}\,d{\xi}
+\int_{b}^{x}w(\xi)\,\frac{(x-{\xi})^{m}}{m!}\,d{\xi}, \quad \ \, x \in [a,\,b], \quad m \in {\mathbb N}, 
\end{eqnarray}
which can be viewed as an extension of the generalized Peano kernel function. 
It is easy to show that for any $m \in {\mathbb N}$, there is
\bea\label{prop-tht}
\big|\Theta_{[a, b]}^{(m)}(x)\big| \le C(b-a)^m \int_a^b \big|w(\xi)\big| d\xi,\qquad x\in[a, b].
\eea
The main technique used in our proof is the extension of weighted Montgomery identity \cite{Khan2013, Duo2018}. 
For the convenience of the reader, we will review it in the following lemma. 

\begin{lemma}[Extension of the weighted Montgomery identity \cite{Duo2018}]\label{Montgomery}
Let $w, f: [a,\,b]\rightarrow \mathbb{R}$ be integrable functions.
If  the derivative $f^{(n)}$ exists and integrable for  $n \in {\mathbb N}$,  we have
\begin{eqnarray}
&&\int_{a}^{b} \Big(2f(x)- f(a) - f(b) \Big)w(x)\,dx  = (-1)^n \int_a^b \Theta_{[a,\,b]}^{(n-1)}(x)f^{(n)}(x)\,dx\nn\\
&& \hspace{6cm}+\sum_{k=2}^{n}(-1)^{k-1}
\left(\Theta_{[a,\,b]}^{(k-1)}(b)f^{(k-1)}(b)-\Theta_{[a,\,b]}^{(k-1)}(a)f^{(k-1)}(a)\right). \qquad\qquad\qquad\qquad  \nn
\end{eqnarray}
\end{lemma}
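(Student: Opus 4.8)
The plan is to prove the extension of the weighted Montgomery identity by induction on $n$, starting from the base case $n=1$ and then using integration by parts to bootstrap from $n$ to $n+1$.

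\textbf{Base case.} For $n = 1$ the claimed identity reads $\int_a^b (2f(x) - f(a) - f(b))\,w(x)\,dx = -\int_a^b \Theta_{[a,b]}^{(0)}(x)\,f'(x)\,dx$, with the sum over $k$ being empty. Here $\Theta_{[a,b]}^{(0)}(x) = \int_a^x w(\xi)\,d\xi + \int_b^x w(\xi)\,d\xi$. I would verify this directly: write $2f(x) - f(a) - f(b) = (f(x) - f(a)) + (f(x) - f(b)) = \int_a^x f'(\xi)\,d\xi - \int_x^b f'(\xi)\,d\xi$, multiply by $w(x)$, integrate over $[a,b]$, and swap the order of integration in each double integral (Fubini, justified by integrability of $w$ and $f'$). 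The $\xi$-region for the first term is $\{a \le \xi \le x \le b\}$, giving $\int_a^b f'(\xi)\big(\int_\xi^b w(x)\,dx\big)d\xi$; the second gives $-\int_a^b f'(\xi)\big(\int_a^\xi w(x)\,dx\big)d\xi$. Adding and relabeling $\xi \leftrightarrow x$ yields $\int_a^b f'(x)\big(\int_x^b w - \int_a^x w\big)dx = -\int_a^b f'(x)\,\Theta_{[a,b]}^{(0)}(x)\,dx$, which is exactly the $n=1$ case.

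\textbf{Inductive step.} Assume the identity holds for some $n \ge 1$; suppose $f^{(n+1)}$ exists and is integrable. In the $n$-term identity I would integrate by parts the leading term $(-1)^n\int_a^b \Theta_{[a,b]}^{(n-1)}(x)\,f^{(n)}(x)\,dx$. The key observation is that $\frac{d}{dx}\Theta_{[a,b]}^{(n)}(x) = \Theta_{[a,b]}^{(n-1)}(x)$, which follows by differentiating (\ref{Tht0}) under the integral sign (the boundary terms from differentiating the upper limit $x$ vanish because the integrand $w(\xi)(x-\xi)^n/n!$ is zero at $\xi = x$ when $n \ge 1$). Hence $\int_a^b \Theta_{[a,b]}^{(n-1)}(x)\,f^{(n)}(x)\,dx = \big[\Theta_{[a,b]}^{(n)}(x)\,f^{(n)}(x)\big]_a^b - \int_a^b \Theta_{[a,b]}^{(n)}(x)\,f^{(n+1)}(x)\,dx$. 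Multiplying by $(-1)^n$ and absorbing the boundary term $(-1)^n(\Theta_{[a,b]}^{(n)}(b)f^{(n)}(b) - \Theta_{[a,b]}^{(n)}(a)f^{(n)}(a))$ into the existing sum (it is precisely the $k = n+1$ summand, since $(-1)^n = (-1)^{(n+1)-1}$) produces the $(n+1)$-term identity. This closes the induction.

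\textbf{Main obstacle.} The calculations are essentially routine; the only points requiring care are (i) justifying the Fubini interchange in the base case and the differentiation under the integral sign for $\frac{d}{dx}\Theta^{(n)}_{[a,b]} = \Theta^{(n-1)}_{[a,b]}$ under the mere integrability hypotheses on $w$ and $f$ — one uses that $w$ is integrable and the polynomial factors are bounded on $[a,b]$ — and (ii) bookkeeping the signs $(-1)^n$ versus $(-1)^{k-1}$ so that the new boundary term lands exactly as the $k = n+1$ term of the sum. Since this lemma is quoted verbatim from reference \cite{Duo2018}, I would, if space is tight, simply cite that reference for the full induction and only sketch the base case and the one integration-by-parts step above.
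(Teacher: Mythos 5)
Your proof is correct. The paper itself gives no proof of this lemma---it is quoted from \cite{Duo2018} ``for the convenience of the reader''---so there is nothing in-paper to compare against; your induction (base case via Fubini, inductive step via the relation $\frac{d}{dx}\Theta^{(n)}_{[a,b]}=\Theta^{(n-1)}_{[a,b]}$ and one integration by parts, with the boundary term landing exactly as the $k=n+1$ summand) is the standard derivation and the sign bookkeeping checks out.
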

If $w \equiv 1$, Lemma \ref{Montgomery} gives the error estimates of the conventional trapezoidal rule. 
Next, we will present some often used properties of the central difference quotient $\varphi_{1,\gm}$. 
For notational simplicity, we will omit ${ x}$ and let $\varphi_{1, \gamma}(\xi) := \varphi_{1, \gamma}(x, \xi)$, and also denote $\varphi^{(n)}_{1, \gm}(\xi) := \p_\xi^n\varphi_{1, \gm}(x, \xi)$.
\begin{lemma}\label{lemma1-2D}
Let $0 < s \le 1$ and $\xi \in {\mathbb R}^{+}$.
\begin{itemize}\itemsep -0pt
\item[(i)]  If $u \in C^{m, s}(\mathbb{R})$ for $m = 0, 1$,  there exists a constant $C > 0$, such that
$\big|\varphi_{1, 0}(\xi)\big| \leq C\xi^{s+m}$. 
\item[(ii)] If $u \in C^{1, s}(\mathbb{R})$,  then the derivative $\varphi_{1, \gm}'$ exists.  Moreover,  there is a constant $C > 0$, such that 
$\big|\varphi'_{1, \gamma}(\xi)\big| \leq C{\xi}^{s-\gamma}$ for  any $\gm \in (\ap, 2]$.
\item[(iii)] If $u \in C^{m, s}(\mathbb{R})$ for $m = 2, 3$, the derivatives $\varphi'_{1,2}$, $\varphi''_{1,2}$ exist. Moreover, there exists a constant $C > 0$, such that
$\big|\varphi'_{1, 2}(\xi)\big|\leq C{\xi}^{s-(3-m)}$ and
$\big|\varphi''_{1,2}(\xi)\big|\leq C{\xi}^{s-(4-m)}$.
\end{itemize}
\end{lemma}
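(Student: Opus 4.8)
The plan is to estimate the central difference quotient $\varphi_{1,\gm}(\xi) = [u(x-\xi) - 2u(x) + u(x+\xi)]/\xi^\gm$ and its $\xi$-derivatives directly from the H\"older/differentiability hypotheses, using Taylor expansion with the appropriate number of terms and controlling the remainder by the H\"older seminorm of the top derivative. Throughout, the symmetry $u(x-\xi) + u(x+\xi) - 2u(x)$ is even in $\xi$, so all odd-order Taylor terms cancel; this is what produces the stated exponents. I would treat the three parts in order, since (ii) and (iii) reuse the expansion ideas from (i).

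For part (i), write $g(\xi) := u(x+\xi) + u(x-\xi) - 2u(x)$ so that $\varphi_{1,0}(\xi) = g(\xi)$. If $u \in C^{0,s}$, bound $|g(\xi)| \le |u(x+\xi) - u(x)| + |u(x-\xi) - u(x)| \le 2[u]_{C^{0,s}}\xi^s$, giving $m=0$. If $u \in C^{1,s}$, write $u(x\pm\xi) - u(x) = \pm\xi u'(x) + \int_0^{\pm\xi}(u'(x+t) - u'(x))\,dt$; the linear terms cancel in the sum, and each remainder integral is bounded by $[u']_{C^{0,s}}\int_0^\xi t^s\,dt = \frac{[u']_{C^{0,s}}}{s+1}\xi^{s+1}$, giving $|g(\xi)| \le C\xi^{s+1}$, i.e. $m=1$.

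For part (ii), note $\varphi_{1,\gm}(\xi) = \xi^{-\gm}g(\xi)$, so $\varphi_{1,\gm}'(\xi) = \xi^{-\gm}g'(\xi) - \gm\xi^{-\gm-1}g(\xi)$, where $g'(\xi) = u'(x+\xi) - u'(x-\xi)$. When $u \in C^{1,s}$ we have $|g'(\xi)| = |u'(x+\xi) - u'(x-\xi)| \le [u']_{C^{0,s}}(2\xi)^s \le C\xi^s$, and from part (i) (the $m=1$ bound, which holds since $C^{1,s}$) $|g(\xi)| \le C\xi^{s+1}$. Hence $|\varphi_{1,\gm}'(\xi)| \le C\xi^{s-\gm} + C\xi^{s-\gm} = C\xi^{s-\gm}$, uniformly for $\gm \in (\ap,2]$. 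For part (iii) with $\gm = 2$, differentiate once more: $\varphi_{1,2}''(\xi)$ is a linear combination of $\xi^{-2}g''(\xi)$, $\xi^{-3}g'(\xi)$, and $\xi^{-4}g(\xi)$, where $g''(\xi) = u''(x+\xi) + u''(x-\xi) - 2u''(x) \cdot 0$— actually $g''(\xi) = u''(x+\xi) - (-1)^2 u''(x-\xi) \cdot (-1)$; being careful, $g''(\xi) = u''(x+\xi) + u''(x-\xi)$ from differentiating $u'(x+\xi) - u'(x-\xi)$, but since we want smallness we instead observe $g''(\xi) = [u''(x+\xi) - u''(x)] + [u''(x-\xi) - u''(x)] + 2u''(x)$; the last constant is problematic, so instead for $m=2$ I would use the expansion $g(\xi) = \xi^2 u''(x) + o(\xi^2)$ and differentiate that. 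The clean route: for $u\in C^{2,s}$ use Taylor with integral remainder through order $2$, subtracting the (even) quadratic term only where it helps — the key bounds are $|g(\xi) - \xi^2 u''(x)| \le C\xi^{2+s}$, $|g'(\xi) - 2\xi u''(x)| \le C\xi^{1+s}$, $|g''(\xi) - 2u''(x)| \le C\xi^{s}$. Substituting into $\varphi_{1,2}(\xi) = \xi^{-2}g(\xi)$ and its derivatives, the $u''(x)$ contributions combine into the exact derivatives of the constant $u''(x)$ (which vanish for $\varphi_{1,2}'$, $\varphi_{1,2}''$), leaving $|\varphi_{1,2}'(\xi)| \le C\xi^{s-1}$ and $|\varphi_{1,2}''(\xi)| \le C\xi^{s-2}$, matching the claim for $m=2$; the $m=3$ case is identical with one more Taylor order, giving the extra power of $\xi$.

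The main obstacle is bookkeeping the cancellations correctly: one must expand $g$ to exactly the order of regularity available, peel off the even polynomial part (which is the only surviving term and whose contribution to $\varphi_{1,2}$ is a constant, hence disappears under differentiation), and bound the remainder by the H\"older seminorm of the top derivative over an interval of length $\xi$. A secondary point is ensuring the constants in (ii) are genuinely uniform in $\gm \in (\ap,2]$, which follows because the only $\gm$-dependence is through the bounded factor $\gm$ and the power $\xi^{-\gm}$, and on the relevant range $\xi \in (0, L]$ we never need $\gm$ near its endpoints to control anything. No deep idea is needed beyond Taylor's theorem with integral remainder; the delicacy is purely in matching exponents.
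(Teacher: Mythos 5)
Your proposal is correct and follows exactly the route the paper intends: the paper dismisses this lemma with the single remark that it ``can be done by directly applying Taylor's theorem,'' and your Taylor-with-integral-remainder argument, exploiting the evenness of $u(x+\xi)+u(x-\xi)-2u(x)$ so that the surviving polynomial part of $\xi^{-2}g(\xi)$ is the constant $u''(x)$ and drops out under differentiation, is precisely that computation carried out in detail. The momentary fumble over $g''(\xi)=u''(x+\xi)+u''(x-\xi)$ is resolved correctly by your ``clean route,'' in which the coefficients $2-8+6=0$ annihilate the $u''(x)$ contributions and the H\"older remainders deliver the stated exponents.
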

The proof of Lemma \ref{lemma1-2D} can be done by directly applying Taylor's theorem.
In the following,  we will present our main results in Theorems \ref{thm1} and \ref{thm2} for $\ap \in (0, 1)$ and $\ap \in [1, 2)$, respectively.  
Define the local truncation error as:
\bea\label{localerror}
e_{\alpha,\gamma}^{h}(\bx) = (-\Dt+\lambda)^{\fl{\ap}{2}} u(\bx) - (-\Dt+\lambda)^{\fl{\ap}{2}}_{h,\gamma} u(\bx),\qquad \mbox{for} \ \ \bx\in\Og.
\eea
\begin{theorem}[{\bf Error estimates for $0 < \ap < 1$}]\label{thm1}
Let $(-\Dt + \lambda)_{h,\gamma}^{\fl{\ap}{2}}$ be the finite difference approximation of the operator $(-\Dt + \lambda)^{\fl{\ap}{2}}$, with $h$ a small mesh size.  
Suppose that  $u$ has finite support on the domain $\Og \subset {\mathbb R}^d$. 
For $0 < \veps \le 1-\ap$, there exists a  constant $C > 0$ independent of $h$, such that
\begin{enumerate}\itemsep -1pt
\item[](i) if $u \in C^{0,\,\ap + \veps}(\bar{\Og})$ and $\gm \in (\ap, 2]$, the local truncation error satisfies $\big\|e_{\alpha,\gamma}^{h}(\bx)\big\|_\infty \le Ch^\veps$. 
\item[](ii) if $u \in C^{2,\,\ap + \veps}(\bar{\Og})$ and $\gm = 2$, the local truncation error satisfies $\big\|e_{\alpha, 2}^{h}(\bx)\big\|_\infty \le Ch^2$.
\end{enumerate}
\end{theorem}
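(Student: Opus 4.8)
Throughout I would work in one dimension, the passage to $d=2,3$ being line‑by‑line once the corresponding element integrals and the near‑origin correction are written out. The first step is to express the error as a sum of element quadrature errors. Using the weighted‑integral form (\ref{TfL2}) and the splitting (\ref{split1})--(\ref{I00-final}), at a grid point $x_i$ one gets
\[
e_{\ap,\gm}^{h}(x_i)=-c_1^{\ap,\lambda}\sum_{k=0}^{N-1}R_k,\qquad R_k:=\int_{I_k}\varphi_{1,\gm}(x_i,\xi)\,w_{\lambda,\gm}(\xi)\,d\xi-Q_k,
\]
where $I_k=[kh,(k+1)h]$, $w_{\lambda,\gm}(\xi)=\xi^{\gm-1-\ap}e^{-\lambda\xi}$, and $Q_k$ is the (corrected) weighted‑trapezoidal value our scheme assigns to $I_k$: for $k\ge1$, $Q_k=\tfrac12\big(\varphi_{1,\gm}(x_i,kh)+\varphi_{1,\gm}(x_i,(k+1)h)\big)\int_{I_k}w_{\lambda,\gm}$, while for $k=0$ the $\lfloor\gm/2\rfloor$‑rule collapses (in 1D) to $Q_0=\varphi_{1,2}(x_i,h)\int_{I_0}w_{\lambda,2}$ when $\gm=2$ and to $Q_0=\tfrac12\varphi_{1,\gm}(x_i,h)\int_{I_0}w_{\lambda,\gm}$ when $\gm<2$. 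The integral over $(L,\infty)$ contributes no error since $u$ has finite support on $\Og$, so there $\varphi_{1,\gm}(x_i,\xi)=-2u(x_i)\xi^{-\gm}$ and that piece is integrated in closed form. Hence it suffices to bound $\sum_{k=0}^{N-1}R_k$, and I would always isolate the singular element $I_0$. The tempered factor $e^{-\lambda\xi}$ is smooth and $\le1$ on $[0,L]$, so it only contributes bounded multipliers, with $|w_{\lambda,2}'(\xi)|\le C\xi^{-\ap}$ on $(0,L]$.

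For part (i), set $s=\ap+\veps\in(\ap,1]$. For $k\ge1$ I would use that the plain trapezoidal value is exact on constants, so $|R_k|\le(\mathrm{osc}_{I_k}\varphi_{1,\gm})\int_{I_k}w_{\lambda,\gm}$; writing $\varphi_{1,\gm}=\varphi_{1,0}(\xi)\xi^{-\gm}$ and combining $|\varphi_{1,0}(\xi)|\le C\xi^{s}$ with the Hölder bound $|\varphi_{1,0}(\xi_1)-\varphi_{1,0}(\xi_2)|\le C|\xi_1-\xi_2|^{s}$ from Lemma \ref{lemma1-2D}(i), and $|\xi_1^{-\gm}-\xi_2^{-\gm}|\le C(kh)^{-\gm-1}h$ on $I_k$, one gets $\mathrm{osc}_{I_k}\varphi_{1,\gm}\le Ch^{s}(kh)^{-\gm}$; since $\int_{I_k}w_{\lambda,\gm}\le Ch(kh)^{\gm-1-\ap}$, this yields $|R_k|\le Ch^{\veps}k^{-1-\ap}$, a convergent series because $\ap>0$. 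For $I_0$ I would bound the two pieces of $R_0$ separately: $\big|\int_{I_0}\varphi_{1,\gm}w_{\lambda,\gm}\big|=\big|\int_0^h\varphi_{1,0}(\xi)\xi^{-1-\ap}e^{-\lambda\xi}d\xi\big|\le C\int_0^h\xi^{s-1-\ap}d\xi\le Ch^{\veps}$ (this is exactly where $s>\ap$ is needed for convergence), and $|Q_0|\le Ch^{s-\gm}\int_0^h\xi^{\gm-1-\ap}d\xi\le Ch^{\veps}$. Summing gives $\|e_{\ap,\gm}^{h}\|_\infty\le Ch^{\veps}$.

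For part (ii), now $u\in C^{2,s}(\bar{\Og})$ with $s=\ap+\veps$ and $\gm=2$, and I may use $|\varphi_{1,2}'(\xi)|\le C\xi^{s-1}$, $|\varphi_{1,2}''(\xi)|\le C\xi^{s-2}$ from Lemma \ref{lemma1-2D}(iii). On $I_0$: $R_0=\int_{I_0}[\varphi_{1,2}(x_i,\xi)-\varphi_{1,2}(x_i,h)]w_{\lambda,2}(\xi)d\xi$ and $|\varphi_{1,2}(x_i,\xi)-\varphi_{1,2}(x_i,h)|\le\int_\xi^h|\varphi_{1,2}'|\le Ch^{s}$, so $|R_0|\le Ch^{s}\int_0^h\xi^{1-\ap}d\xi\le Ch^{2+\veps}$. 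For $k\ge1$ I would apply the weighted Montgomery identity, Lemma \ref{Montgomery}, with $n=2$, $f=\varphi_{1,2}$, $w=w_{\lambda,2}$ on $I_k$, obtaining $2R_k=\int_{I_k}\Theta_{I_k}^{(1)}(\xi)\varphi_{1,2}''(\xi)d\xi-\Theta_{I_k}^{(1)}((k+1)h)\varphi_{1,2}'((k+1)h)+\Theta_{I_k}^{(1)}(kh)\varphi_{1,2}'(kh)$. By (\ref{prop-tht}), $|\Theta_{I_k}^{(1)}(\xi)|\le Ch\int_{I_k}w_{\lambda,2}\le Ch^2(kh)^{1-\ap}$ and $\int_{I_k}|\varphi_{1,2}''|\le Ch(kh)^{s-2}$, so the interior term is $\le Ch^{2+\veps}k^{\veps-1}$. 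A per‑element estimate of the two boundary terms would only be $O(h^{2+\veps}k^{\veps})$, summing to a useless $O(h)$; instead I would sum over $k$ first, so that the boundary terms telescope to $\Theta_{I_1}^{(1)}(h)\varphi_{1,2}'(h)-\Theta_{I_{N-1}}^{(1)}(Nh)\varphi_{1,2}'(Nh)+\sum_{j=2}^{N-1}\big[\Theta_{I_j}^{(1)}(jh)-\Theta_{I_{j-1}}^{(1)}(jh)\big]\varphi_{1,2}'(jh)$. After substituting $\xi=jh\pm t$, $\Theta_{I_j}^{(1)}(jh)-\Theta_{I_{j-1}}^{(1)}(jh)=\int_0^h[w_{\lambda,2}(jh+t)-w_{\lambda,2}(jh-t)]\,t\,dt$, which by $|w_{\lambda,2}'|\le C\xi^{-\ap}$ is $\le Ch^{3}(jh)^{-\ap}$, so the $j$‑th summand is $\le Ch^{2+\veps}j^{\veps-1}$. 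Since $N=L/h$ and $\sum_{k\le N}k^{\veps-1}\le CN^{\veps}$, the two $h^{2+\veps}k^{\veps-1}$‑type sums total $\le CL^{\veps}h^{2}$, while the two telescoped endpoints are $O(h^{2+\veps})$ and $O(h^{2})$; altogether $\|e_{\ap,2}^{h}\|_\infty\le Ch^{2}$.

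The far elements are routine: the weighted trapezoidal rule together with Lemma \ref{lemma1-2D} gives the expected rates. The two delicate points, which I expect to be the real work, are: (a) the singular element $I_0$, where in part (ii) both $\varphi_{1,2}$ and its derivatives blow up at $\xi=0$ and one must exploit that $\int_0^h\xi^{s-2}$‑type quantities are still controlled precisely because $s>\ap$ — i.e.\ the singularity of the difference quotient is weaker than the compensating power of the (tempered) weight; and (b) in part (ii), recognizing that the endpoint terms produced by the Montgomery identity on each element cannot be bounded element‑by‑element but must be reorganized by a discrete summation by parts — only after telescoping do they become a second difference of the weight, of size $O(h^{3}\xi^{-\ap})$, and only then is the barely divergent sum $\sum_k k^{\veps-1}\sim N^{\veps}=(L/h)^{\veps}$ absorbed by $h^{\veps}$ to yield the clean $\mathcal O(h^{2})$. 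The extension to $d=2,3$ then introduces only the combinatorial factor $2^{\sigma(\cdot)}$ in the element integrals and the $\bar c$‑coefficients in the corner correction near the origin, neither of which changes the rates.
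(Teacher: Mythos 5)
Your proposal is correct and follows essentially the same route as the paper's proof: the same isolation of the singular element $I_0$, the same use of Lemma \ref{lemma1-2D} together with the H\"older continuity of $u$ and the Lipschitz bound on $\xi^{-\gm}$ for part (i), and for part (ii) the same application of the weighted Montgomery identity with $n=2$ followed by the crucial summation by parts of the boundary terms, yielding the $O(h^3\xi_j^{-\ap})$ bound on the telescoped $\Theta$-differences. The only cosmetic difference is that you estimate $\Theta^{(1)}_{[\xi_{j-1},\xi_j]}(\xi_j)-\Theta^{(1)}_{[\xi_j,\xi_{j+1}]}(\xi_j)=\int_0^h\big[w_{\lambda,2}(\xi_j-t)-w_{\lambda,2}(\xi_j+t)\big]\,t\,dt$ directly via the mean value theorem on the weight, whereas the paper introduces the auxiliary function $G$ and a third-order Taylor bound; the two computations are equivalent.
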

\begin{proof}
Here, we will focus on the proof for $d = 1$.  
From (\ref{split1})--(\ref{I-2D}) and (\ref{I00-final}) with $d = 1$, we obtain
\bea\label{error}
e_{\alpha,\gamma}^{h}(x) &=&-\fl{c_{1}^{\alpha,\lambda}}{2}
\bigg(\int_{0}^{\xi_1}\Big(2\varphi_{1, \gamma}(\xi) 
- c_{1}^\gm\varphi_{1, \gm}(\xi_{1})\Big)
w_{\lambda,\gm}(\xi)d\xi \nn\\
&&+ \sum_{i=1}^{N-1} \int_{\xi_{i}}^{\xi_{i+1}} \Big( 2\varphi_{1, \gamma}(\xi) - \varphi_{1, \gamma}\big(\xi_{i}\big)- \varphi_{1, \gamma}\big(\xi_{i+1}\big)\Big) w_{\lambda,\gm}(\xi) d\xi \bigg] = -\fl{c_1^{\alpha,\lambda}}{2} (I+II), \quad 
\eea
where  constant $c_1^{\gamma}$ is defined after (\ref{I00-final}). 
We will then prove Cases (i) and (ii) separately. 
\bigskip 

\noindent{\bf Case (i)  (For $u\in C^{0,\,\ap+\veps}({\mathbb R})$): }  
Formally, we can rewrite  $\varphi_{1, \gamma}(\xi) = \xi^{-\gm}\varphi_{1, 0}(\xi)$. 
Using the triangle inequality and Lemma \ref{lemma1-2D} (i) with $m = 0$ and $s = \ap+\veps$, we  obtain 
\bea\label{thm11-termI}
|\,I \, | &=& \bigg|\int_0^h
\Big(2\varphi_{1, 0}(\xi)\xi^{-(1+\alpha)} - c_{1}^\gm\,\varphi_{1, 0}(\xi_{1})\,\xi_{1}^{-\gm} \xi^{\gamma-(1+\alpha)}\Big)e^{-\lambda \xi}\,d\xi \bigg|\nn\\
&\le& C\bigg(\int_0^h\big|\varphi_{1, 0}(\xi)\big| \xi^{-(1+\alpha)}\,d\xi + h^{-\gm}\int_0^h \big|\varphi_{1, 0}(\xi_{1})\big|\xi^{\gamma-(1+\alpha)}d\xi \bigg)\quad \nn\\
&\le& C\bigg(\int_0^h \xi^{\veps-1}d\xi + h^{\ap+\veps-\gm}\int_0^h \xi^{\gm-(1+\ap)}d\xi \bigg)  \le \, Ch^{\varepsilon}.
\eea
For term $II$, we first add and subtract {$\big(\varphi_{1, 0}(\xi_i) + \varphi_{1, 0}(\xi_{i+1})\big)\xi^{-(1+\ap)}e^{-\lambda\xi}$} and then use the triangle inequality, Taylor's  theorem, and Lemma \ref{lemma1-2D} (i) with $m = 0$ and  $s = a+\veps$, to obtain
\beas
|\,II\,| &\leq& \sum_{i=1}^{N-1} \int_{\xi_i}^{\xi_{i+1}} \big| 2\varphi_{1, 0}(\xi) - \varphi_{1, 0}\big(\xi_{i}\big)- \varphi_{1, 0}\big(\xi_{i+1}\big)\big| \xi^{-(1+\alpha)} d\xi\nn\\
&&+\sum_{i=1}^{N-1} \int_{\xi_i}^{\xi_{i+1}} \sum_{m = 0,1}\big|\xi^{-\gm}-\xi_{i+m}^{-\gm}\big| \big|\varphi_{1, 0}\big(\xi_{i+m}\big)\big|\xi^{\gamma-(1+\alpha)} d\xi \nn\\
&\le& C\bigg( \sum_{i=1}^{N-1}\int_{\xi_i}^{\xi_{i+1}} \sum_{m = 0, 1} \Big(|u(x-\xi) - u(x-{\xi_{i+m}})| + |u(x+\xi) - u(x+{\xi_{i+m}})|\Big)\xi^{-(1+\ap)} d\xi\nn\\
&&+ \sum_{i=1}^{N-1}\int_{\xi_i}^{\xi_{i+1}}  h \Big(\max_{\eta\in[\xi_i, \xi]} \eta^{-(\gm+1)} +  \max_{\zeta\in[\xi, \xi_{i+1}]} \zeta^{-(\gm+1)}\Big) |\xi_{i+m}|^{\ap+\veps}\xi^{\gamma-(1+\alpha)} d\xi \bigg) \nn\\
&\leq& C\bigg(\sum_{i=1}^{N-1} \int_{\xi_i}^{\xi_{i+1}}  h^{\alpha+\varepsilon} \xi^{-(1+\alpha)}\,d\xi 
+ \sum_{i=1}^{N-1} \int_{\xi_i}^{\xi_{i+1}} h \big(\xi^{-(\gm+1)}\big)\big(\xi^{\alpha+\varepsilon}\big)\big(\xi^{\gamma-(1+\alpha)}\big)\, d\xi \bigg)\qquad\qquad\qquad \nn\\
\label{thm11-termII}
&=& C\bigg(h^{\ap + \veps}\int_h^L \xi^{-(1+\ap)}d\xi  + h\int_h^L \xi^{\veps - 2}d\xi \bigg)\, \le  \, Ch^{\varepsilon}, \qquad\qquad\qquad\qquad\qquad\qquad\qquad\ \ 
\eeas
where we use the fact that for $i \ge 1$, if $\xi \in [\xi_i, \xi_{i+1}]$, then $\xi_{i+1} \le 2\xi_i \le 2\xi$. 
Combining (\ref{error})--(\ref{thm11-termII}) leads to $\big\|e_{\ap, \gm}^h(x)\big\|_\infty \le Ch^\veps$ immediately. \\

\noindent{\bf Case (ii) (For $u\in C^{2,\,\ap+\veps}({\mathbb R})$): }  
Setting $\gm = 2$ and using Lemma \ref{Montgomery} with $n = 2$ to (\ref{error}), we get 
\begin{eqnarray} \label{Error2}
&&e_{\alpha, 2}^{h}(x) = -\fl{c_{1}^{\alpha,\lambda}}{2}
\bigg(2 \int_{0}^{\xi_1} 
\Big(\varphi_{1, 2}(\xi)-\varphi_{1, 2}(\xi_1)\Big)e^{-\lambda\xi}\xi^{1-\alpha}\,d\xi +\sum_{i=1}^{N-1} \int_{\xi_i}^{\xi_{i+1}} \Theta_{[\xi_i,\xi_{i+1}]}^{(1)} (\xi)\varphi''_{1, 2}(\xi) \,d\xi\nn\\
&&\hspace{1cm}-\sum_{i=1}^{N-1}
\Big(\Theta_{[\xi_i,\xi_{i+1}]}^{(1)}(\xi_{i+1})\varphi'_{1, 2}(\xi_{i+1})-\Theta_{[\xi_i,\xi_{i+1}]}^{(1)}(\xi_{i})\varphi'_{1, 2}(\xi_{i})\Big)\bigg) =-\fl{c_{1}^{\alpha,\lambda}}{2}\big(\bar{I} + \bar{II} +\bar{III}\big).\qquad \qquad 
\end{eqnarray}
By Taylor's theorem and Lemma \ref{lemma1-2D} (iii) with $m = 2$ and $s = \ap+\veps$, we obtain
\begin{eqnarray}\label{term1}
|\,\bar{I}\,| =  2\,\bigg|\int_0^h\bigg(\int_h^{\xi}\varphi'_{1, 2}(\widetilde{\xi})\,d\widetilde{\xi}\bigg)e^{-\lambda\xi} \xi^{1-\alpha}d\xi \bigg| &\le& C\int_0^h\bigg(\int_{\xi}^h|\varphi'_{1, 2}(\widetilde{\xi})|\,d\widetilde{\xi}\bigg)\xi^{1-\alpha}d\xi \nn\\
&\leq& C \bigg(\int_0^{h}\widetilde{\xi}^{\alpha+\varepsilon-1}\,d\widetilde{\xi}\bigg)\bigg(\int_0^h\xi^{1-\alpha}d\xi\bigg) \leq  Ch^{2+\varepsilon}.\qquad\quad
\eea

For term $\bar{II}$, using the triangle inequality,  property (\ref{prop-tht}),  and Lemma \ref{lemma1-2D} (iii) with $m = 2$ and $s = \alpha+\veps$, we obtain 
\begin{eqnarray}\label{term2}
|\,\bar{II}\,| &\leq&   \sum_{i = 1}^{N-1} \int_{\xi_i}^{\xi_{i+1}} \big|\Theta_{[\xi_i,\xi_{i+1}]}^{(1)} (\xi)\big|\,\big|\varphi''_{1, 2}(\xi)\big| d\xi \le  C\sum_{i=1}^{N-1}\bigg(h\int_{\xi_{i}}^{\xi_{i+1}} e^{-\lambda \xi} \xi ^{1-\alpha} d\xi  \bigg)
\bigg(\int_{\xi_{i}}^{\xi_{i+1}}  \xi^{\ap+\veps-2} \,d\xi \bigg) \nn\\
&\leq & Ch\sum_{i=1}^{N-1}\bigg( \int_{\xi_{i}}^{\xi_{i+1}}  \xi_{i+1}^{-1}\xi ^{1-\alpha} d\xi  \bigg)
\bigg(\int_{\xi_{i}}^{\xi_{i+1}}  \xi_{i+1}\xi^{\ap+\veps-2} \,d\xi \bigg) \nn\\
&\leq & Ch\sum_{i=1}^{N-1}\bigg( \int_{\xi_{i}}^{\xi_{i+1}}\xi ^{-\alpha} d\xi  \bigg)
\bigg(\int_{\xi_{i}}^{\xi_{i+1}} \xi^{\ap+\veps-1} \,d\xi \bigg) \leq  Ch^2\sum_{i=1}^{N-1}\int_{\xi_{i}}^{\xi_{i+1}}  \xi^{\veps-1} d\xi \,
\le \, C h^{2}, 
\end{eqnarray}
where the third inequality is obtained by using the same property as  in obtaining (\ref{thm11-termII}), and the {second inequality from the end} is by the Chebyshev integral inequality. 

For term $\bar{III}$, we first rewrite it as
\beas
&&\bar{III}=-\sum_{i=2}^{N-1} \Big(\Theta_{[\xi_{i-1}, \xi_i]}^{(1)}(\xi_{i})
-\Theta_{[\xi_{i}, \xi_{i+1}]}^{(1)}(\xi_{i})\Big)\varphi'_{1, 2}(\xi_{i})
+\Theta_{[\xi_{1}, \xi_{2}]}^{(1)}(\xi_{1})\varphi'_{1, 2}(\xi_{1})
-\Theta_{[\xi_{N-1},\xi_{N}]}^{(1)}(\xi_{N})\varphi'_{1, 2}(\xi_{N})\qquad\qquad\nn\\
&&\hspace{0.7cm} =  \bar{III}_1 + \bar{III}_2 + \bar{III}_3.
\eeas
To estimate term $\bar{III}_1$, we define an auxiliary function
$G(x) =\int_{x}^{\xi_{i}} e^{-\lambda\xi}\, \xi^{1-\alpha}(\xi_i - {\xi})\,d\xi$.
Noticing the definitions of $\Theta$ and $G$, and by Taylor's theorem, we  obtain 
\begin{eqnarray}\label{eq100}
\big|\Theta_{[\xi_{i-1},\xi_i]}^{(1)}(\xi_{i})
-\Theta_{[\xi_{i},\xi_{i+1}]}^{(1)}(\xi_{i})\big| 
= \big|G(\xi_{i-1}) - G(\xi_{i+1})\big| 
\leq Ch^{3}\max_{\widetilde{\xi}\in[\xi_{i-1},\xi_{i+1}]}|G'''(\widetilde{\xi})| \le Ch^3\xi_i^{-\ap}, \quad 
\end{eqnarray}
where we use the fact that $G(\xi_i) = G'(\xi_i) = 0$ and $\max_{\widetilde{\xi}\in[\xi_{i-1},\xi_{i+1}]}|G'''(\widetilde{\xi})| \leq C \xi_i^{-\alpha}$. 
Using the triangle inequality, Lemma \ref{lemma1-2D} (iii) with $m = 2$ and $s = \ap + \veps$, and (\ref{eq100}), we obtain      
\beas\label{term3-1}
|\, \bar{III}_1 \,| 
&\le&\sum_{i=2}^{N-1} \big|\Theta_{[\xi_{i-1},\xi_i]}^{(1)}(\xi_{i})
-\Theta_{[\xi_{i},\xi_{i+1}]}^{(1)}(\xi_{i})\big|\big|\varphi'_{1, 2}(\xi_{i})\big|\nn\\
&\le& C h^{3} \sum_{i=2}^{N-1}\xi_i^{-\alpha} \xi_i^{\alpha+\veps-1} 
\le  Ch^2 \int_h^L \xi^{\veps-1}d\xi  \le Ch^{2+\veps}.\qquad \qquad \qquad\qquad  
\eeas
Using Lemma \ref{lemma1-2D} (iii) with $m = 2$ and $s = \ap+\veps$ and the property (\ref{prop-tht}) to term $\bar{III}_2$, we obtain
\beas\label{term3-2}
|\, \bar{III}_2 \,|
= \big|\varphi'_{1, 2}(\xi_{1})\big|  \big|\Theta_{[\xi_{1},\xi_{2}]}^{(1)}(\xi_{1})\big| \le  C h^{\ap + \veps -1} \bigg(h \int_{\xi_1}^{\xi_2}e^{-\lambda \xi}\xi^{1-\alpha}\,d\xi\,\bigg) \le Ch^{\alpha+\veps} \Big(\xi_2^{2-\alpha}-\xi_1^{2-\alpha}\Big)
   \le Ch^{2+\veps}.\qquad \qquad \qquad\qquad 
\eeas
Following similar lines and using Taylor's theorem, we get 
\beas\label{term3-3}
|\, \bar{III}_3 \,| =  \big|\varphi'_{1, 2}(\xi_{N})\big| \big|\Theta_{[\xi_{N-1},\xi_{N}]}^{(1)}(\xi_{N})\big|
\leq C \xi_N^{\ap +\veps -1}\bigg(h\int_{\xi_{N-1}}^{\xi_{N}}e^{-\lambda \xi}\xi^{1-\alpha}\,d\xi\bigg) \le Ch \,\xi_{N}^{\alpha+\veps-1} \Big(\xi_{N}^{2-\alpha}-\xi_{N-1}^{2-\alpha}\Big)
   \le Ch^{2}.\qquad\qquad\qquad
\eeas
Combining the above estimates on $\bar{III}_1$, $\bar{III}_2$ and $\bar{III}_3$, we obtain $|\, \bar{III} \,| \le Ch^{2}$.
Hence, the error bounds in Case (ii) is obtained by combining the estimates on terms $\bar{I}$, $\bar{II}$ and $\bar{III}$. 
\end{proof}
Theorem \ref{thm1} indicates that for $\ap \in (0, 1)$, our method is consistent if $u \in C^{0, \ap+\veps}(\bar{\Og})$ with small $\veps > 0$, independent of the splitting parameter $\gm$.  
Furthermore, if choosing $\gm = 2$, our method has the second-order accuracy for $u \in C^{2, \ap+\veps}(\bar{\Og})$. 
The above conclusions hold for any dimension $d \ge 1$ and $\lambda \ge 0$.  
Note that if $\lambda = 0$, Theorem \ref{thm1} (ii) improves the results in \cite{Duo2018} for the fractional Laplacian $(-\Dt)^{\fl{\ap}{2}}$, i.e., proving the second-order accuracy with a much less regularity requirement. 
 
\begin{theorem}[{\bf Error estimates for $1 \le \ap < 2$}]\label{thm2}
Let $(-\Dt + \lambda)_{h,\gamma}^{\fl{\ap}{2}}$ be the finite difference approximation of the operator $(-\Dt + \lambda)^{\fl{\ap}{2}}$, with $h$ a small mesh size.  
Suppose that  $u$ has finite support on the domain $\Og \subset {\mathbb R}^d$. 
For $0 < \veps  {\, \leq \,} 2-\ap$, there exists a constant $C > 0$ independent of $h$, such that 
\begin{enumerate}\itemsep -1pt
\item[](i) If $u \in C^{1,\,\ap -1 + \veps}(\bar{\Og})$ and $\gm\in(\ap, 2]$, the local truncation error 
satisfies $\big\|e_{\ap, \gm}^h(\bx)\big\|_\infty \le Ch^\veps$. 
\item[](ii) If $u \in C^{3,\,\ap -1 + \veps}(\bar{\Og})$ and $\gm =  2$, the local truncation error 
satisfies $\big\|e_{\ap, 2}^h(\bx)\big\|_\infty \le Ch^2$.
\end{enumerate}
\end{theorem}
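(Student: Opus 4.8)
The plan is to mirror the structure of the proof of Theorem \ref{thm1}, working again in one dimension and then remarking that the two- and three-dimensional cases follow by the same tensor-product bookkeeping. Starting from the identity (\ref{error}) for $e_{\ap,\gm}^h(x) = -\tfrac{c_1^{\ap,\lambda}}{2}(I+II)$, the difference from Theorem \ref{thm1} is that $u$ now has one more order of smoothness ($C^{1,\ap-1+\veps}$ or $C^{3,\ap-1+\veps}$), but the kernel singularity $\xi^{-(1+\ap)}$ is correspondingly stronger (since $\ap\ge 1$). The first step is to record that the relevant bounds on $\varphi_{1,\gm}$ are now those in Lemma \ref{lemma1-2D}(i) with $m=1$ (so $|\varphi_{1,0}(\xi)|\le C\xi^{1+s}$), Lemma \ref{lemma1-2D}(ii) (so $|\varphi_{1,\gm}'(\xi)|\le C\xi^{s-\gm}$), and Lemma \ref{lemma1-2D}(iii) with $m=3$ (so $|\varphi_{1,2}'(\xi)|\le C\xi^{s}$ and $|\varphi_{1,2}''(\xi)|\le C\xi^{s-1}$), always taking $s=\ap-1+\veps$.

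For Case (i), I would bound term $I$ (the integral over $[0,\xi_1]$) by the triangle inequality together with $|\varphi_{1,0}(\xi)|\le C\xi^{1+s}$ and $|\varphi_{1,0}(\xi_1)|\le C h^{1+s}$; the first piece gives $\int_0^h \xi^{1+s-(1+\ap)}d\xi = \int_0^h \xi^{\veps-1}d\xi \le Ch^\veps$, and the second piece gives $h^{1+s-\gm}\int_0^h \xi^{\gm-(1+\ap)}d\xi$, which after inserting $s=\ap-1+\veps$ is again $O(h^\veps)$; note this requires $\gm-(1+\ap)>-1$, i.e.\ $\gm>\ap$, which holds. For term $II$ I would, as in the proof of Theorem \ref{thm1}, add and subtract $(\varphi_{1,0}(\xi_i)+\varphi_{1,0}(\xi_{i+1}))\xi^{-(1+\ap)}e^{-\lambda\xi}$, but now -- because $u\in C^1$ -- estimate the ``consistency'' part $|2\varphi_{1,0}(\xi)-\varphi_{1,0}(\xi_i)-\varphi_{1,0}(\xi_{i+1})|$ using a first-order Taylor expansion of $u'$ rather than of $u$, obtaining a bound of order $h^{1+s}=h^{\ap+\veps}$ on each subinterval, hence $\sum_i \int_{\xi_i}^{\xi_{i+1}} h^{\ap+\veps}\xi^{-(1+\ap)}d\xi = h^{\ap+\veps}\int_h^L \xi^{-(1+\ap)}d\xi \le Ch^\veps$; the ``weight-variation'' part is handled exactly as before using $|\xi^{-\gm}-\xi_{i+m}^{-\gm}|\le Ch\,\xi^{-(\gm+1)}$, the near-equal-spacing fact $\xi_{i+1}\le 2\xi_i$, and $|\varphi_{1,0}(\xi_{i+m})|\le C\xi^{1+s}$, which yields $h\int_h^L \xi^{\veps-1}\,d\xi \le Ch^\veps$.

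For Case (ii) with $\gm=2$, I would again apply the extension of the weighted Montgomery identity (Lemma \ref{Montgomery}) with $n=2$ to (\ref{error}), giving the decomposition $e_{\ap,2}^h(x)=-\tfrac{c_1^{\ap,\lambda}}{2}(\bar I+\bar{II}+\bar{III})$ exactly as in (\ref{Error2}), with weight $w(\xi)=e^{-\lambda\xi}\xi^{1-\ap}$ and with $\varphi_{1,2}',\varphi_{1,2}''$ now controlled by Lemma \ref{lemma1-2D}(iii) with $m=3$. The boundary-type term $\bar I$ over $[0,h]$: write $\varphi_{1,2}(\xi)-\varphi_{1,2}(\xi_1)=\int_h^\xi \varphi_{1,2}'(\widetilde\xi)\,d\widetilde\xi$ and use $|\varphi_{1,2}'|\le C\widetilde\xi^{\,s}$ with $s=\ap-1+\veps$; since $s>0$ here (because $\ap\ge 1$), this integrates cleanly to $O(h^{1+s})\cdot\int_0^h\xi^{1-\ap}d\xi=O(h^{3+\veps})$ (using $\ap<2$ so $\int_0^h\xi^{1-\ap}d\xi<\infty$). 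For $\bar{II}$ I would use property (\ref{prop-tht}), $|\varphi_{1,2}''(\xi)|\le C\xi^{s-1}$, the $\xi_{i+1}\le 2\xi_i$ trick, and the Chebyshev integral inequality to reduce to $h^2\int_h^L\xi^{\veps-1}d\xi\le Ch^2$ — structurally identical to (\ref{term2}) with $\ap+\veps$ replaced by $1+s=\ap+\veps$, so the exponents in fact coincide and the computation carries over verbatim. For $\bar{III}$ I would split it as $\bar{III}_1+\bar{III}_2+\bar{III}_3$ (interior telescoping plus the two endpoint terms), estimating the interior term via the auxiliary function $G(x)=\int_x^{\xi_i}e^{-\lambda\xi}\xi^{1-\ap}(\xi_i-\xi)\,d\xi$ with $G(\xi_i)=G'(\xi_i)=0$ and $\max|G'''|\le C\xi_i^{-\ap}\cdot\xi_i^{?}$ — here I would need to recheck the power of $\xi_i$ in $|G'''|$ since the weight $\xi^{1-\ap}$ differs in exponent from the $\ap<1$ case only through $\ap$ itself, so the same computation as (\ref{eq100}) applies — and the endpoint terms via $|\varphi_{1,2}'(\xi_1)|\le Ch^s$, $|\varphi_{1,2}'(\xi_N)|\le C\xi_N^{\,s}$ together with (\ref{prop-tht}).

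The part I expect to require the most care is verifying that all the singular integrals near $\xi=0$ remain convergent when $\ap$ is close to $2$: in Case (ii) the building block $\int_0^h \xi^{1-\ap}\,d\xi$ is finite precisely because $\ap<2$, and in the $\bar I$ estimate one also needs $s=\ap-1+\veps>0$, which is automatic for $\ap\ge1$ but is the structural reason the $C^3$ (rather than $C^2$) regularity is the right hypothesis here; in Case (i) the analogous point is that the Taylor step for term $II$ must be taken on $u'$, and one must confirm that $\gm-(1+\ap)>-1$ so that $\int_0^h\xi^{\gm-(1+\ap)}d\xi$ converges for every admissible $\gm\in(\ap,2]$. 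Once these threshold checks are in place, every remaining estimate is a routine repetition of the corresponding step in the proof of Theorem \ref{thm1} with the substitutions $m\mapsto m+1$ in the Hölder space and $s\mapsto\ap-1+\veps$, so I would state those steps briefly and refer back rather than reproducing the algebra. The higher-dimensional cases then follow because, as in Sec.\ \ref{section2}, the scheme (\ref{scheme0}) is built from one-dimensional weighted trapezoidal quadratures along each coordinate direction, and $\varphi_{d,\gm}$ decomposes accordingly; I would indicate this reduction in a sentence at the end.
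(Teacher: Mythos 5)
Your Case (ii) is essentially the paper's own proof: the same Montgomery decomposition \eqref{Error2} with $n=2$, the same splitting $\bar{III}=\bar{III}_1+\bar{III}_2+\bar{III}_3$ with the auxiliary function $G$, and Lemma \ref{lemma1-2D}(iii) with $m=3$, $s=\ap-1+\veps$; your threshold checks ($\gm>\ap$, $\ap<2$, $s>0$) are exactly the right ones. The only slip there is arithmetic: in $\bar I$ you get $h^{1+s}\int_0^h\xi^{1-\ap}\,d\xi\le Ch^{\ap+\veps}\,h^{2-\ap}=Ch^{2+\veps}$, not $Ch^{3+\veps}$ --- harmless, since $O(h^2)$ is all that is needed and it matches the paper's bound.

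For Case (i) you depart from the paper on term $II$. The paper applies Lemma \ref{Montgomery} with $n=1$, converting $\int\big(2\varphi_{1,\gm}(\xi)-\varphi_{1,\gm}(\xi_i)-\varphi_{1,\gm}(\xi_{i+1})\big)w\,d\xi$ into $-\int\Theta^{(0)}_{[\xi_i,\xi_{i+1}]}\varphi_{1,\gm}'\,d\xi$ and then uses only $|\varphi_{1,\gm}'(\xi)|\le C\xi^{s-\gm}$ together with the Chebyshev integral inequality. Your route --- the add-and-subtract step followed by a Taylor expansion of $u'$ --- can be made to work, but the intermediate bound you assert is not correct as stated: for $u\in C^{1,s}$ and $\xi\in[\xi_i,\xi_{i+1}]$ one has
\[
2u(x\pm\xi)-u(x\pm\xi_i)-u(x\pm\xi_{i+1})=\pm(2\xi-\xi_i-\xi_{i+1})\,u'(x\pm\xi)+O(h^{1+s}),
\]
and the first-order terms do not cancel pointwise (only at the midpoint of the subinterval), so the consistency part is $O(h\,\xi^{s})+O(h^{1+s})$ rather than $O(h^{1+s})$. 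The conclusion nevertheless survives: pairing the $x+\xi$ and $x-\xi$ contributions gives $(2\xi-\xi_i-\xi_{i+1})\big(u'(x+\xi)-u'(x-\xi)\big)=O(h\,\xi^{s})$, and $h\sum_i\int_{\xi_i}^{\xi_{i+1}}\xi^{\,s-(1+\ap)}\,d\xi=h\int_h^L\xi^{\veps-2}\,d\xi\le Ch^{\veps}$, the same order as the piece you kept. So you should either carry this extra term explicitly or switch to the paper's $\Theta^{(0)}$ formulation, which absorbs the first-order structure automatically. Everything else, including the reduction to $d=1$ and the treatment of term $I$, matches the paper.
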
 

\begin{proof} 
Again, we will  focus on the proof for $d = 1$ and divide our discussion into two parts. \vskip 6pt

\noindent{\bf Case (i) (For $u\in C^{1,\,\ap-1+\veps}({\mathbb R})$): } 
Using Lemma \ref{Montgomery} with $n = 1$ to the error function (\ref{error}), we  get 
\bea\label{error1}
e_{\alpha,\gamma}^{h}(x) &=&-\fl{c_{1}^{\alpha,\lambda}}{2}
\bigg(\int_{0}^{\xi_1}\Big(2\varphi_{1, \gamma}(\xi) 
- c_{1}^\gm\varphi_{1, \gm}(\xi_{1})\Big)
w_{\lambda,\gm}(\xi)d\xi -\sum _{i=1}^{N-1}
\int_{\xi_i}^{\xi_{i+1}} \Theta_{[\xi_i,\xi_{i+1}]}^{(0)} (\xi)\, \varphi'_{1, \gamma}(\xi) \,d\xi \bigg)\nn\\
&=& -\fl{c_{1}^{\alpha,\lambda}}{2} \big(I + II\big).
\eea
For term $I$, using the triangle inequality and then Lemma \ref{lemma1-2D} (i) with $m = 1$ and $s = \ap-1+\veps$ yields 
\bea\label{termI}
|\,I \, | &=& \bigg|\int_0^h
\Big(2\varphi_{1, 0}(\xi)\xi^{-(1+\alpha)} - c_{1}^\gm\,\varphi_{1, 0}(\xi_{1})\,\xi_{1}^{-\gm} \xi^{\gamma-(1+\alpha)}\Big)e^{-\lambda \xi}\,d\xi \bigg|\nn\\
&\le& C\bigg(\int_0^h\big|\varphi_{1, 0}(\xi)\big| \xi^{-(1+\alpha)}\,d\xi + \int_0^h \big|\varphi_{1, 0}(\xi_{1})\big|\,\xi_{1}^{-\gm} \xi^{\gamma-(1+\alpha)}d\xi \bigg)\quad \nn\\
&\le& C\bigg(\int_0^h \xi^{\ap+\veps}\xi^{-(1+\ap)}d\xi + h^{\ap+\veps-\gm}\int_0^h \xi^{\gm-(1+\ap)}d\xi \bigg)  \le \, Ch^{\varepsilon}.\qquad\qquad\qquad 
\eea
By triangle inequality, Chebyshev integral inequality  and Lemma \ref{lemma1-2D} (ii) with  $s = \ap-1+\veps$, we get
\bea\label{termII}
|\,II\,| \le \sum _{i=1}^{N-1}
\int_{\xi_i}^{\xi_{i+1}} \big|\Theta_{[\xi_i,\xi_{i+1}]}^{(0)} (\xi)\big| \big|\varphi'_{1, \gamma}(\xi)\big| d\xi &\le& C\sum _{i=1}^{N-1}
\bigg(\int_{\xi_{i}}^{\xi_{i+1}} \xi ^{\gamma-(1+\ap)}\,d\xi 
\bigg)
\bigg(\int_{\xi_{i}}^{\xi_{i+1}} \xi^{\ap+\veps-1-\gamma}\,d\xi \bigg)\nn\\
&\leq& Ch\sum _{i=1}^{N-1}
\int_{\xi_{i}}^{\xi_{i+1}} \xi ^{\veps-2} \,d\xi \ \le \ Ch^{\veps}.
\eea
Combining (\ref{error1})--(\ref{termII}) yields the error estimate in Case (i).\\

\noindent{\bf Case (ii)  (For $u\in C^{3,\,\ap-1+\veps}({\mathbb R})$): } 
Starting from  (\ref{Error2}), we can obtain the same estimates for terms $\bar{I}$ and $\bar{III}$, where instead Lemma \ref{lemma1-2D} {(iii)} with $m = 3$ and $s = \ap + \veps - 1$ is used. 
While for term $\bar{II}$, we use the triangle inequality, property (\ref{prop-tht}), Lemma \ref{lemma1-2D} {(iii)} with $m = 3$ and $s = \ap+\veps-1$, and Chebyshev integral inequality to obtain
\bea
|\,\bar{II}\,| &\leq&   \sum_{i = 1}^{N-1} \int_{\xi_i}^{\xi_{i+1}} \big|\Theta_{[\xi_i,\xi_{i+1}]}^{(1)} (\xi)\big|\,\big|\varphi''_{1, 2}(\xi)\big| d\xi \le  C\sum_{i=1}^{N-1}\bigg(h\int_{\xi_{i}}^{\xi_{i+1}} e^{-\lambda \xi} \xi ^{1-\alpha} d\xi  \bigg)
\bigg(\int_{\xi_{i}}^{\xi_{i+1}}  \xi^{\ap+\veps-2} \,d\xi \bigg) \nn\\
&\le&Ch\sum_{i=1}^{N-1}\bigg(
\int_{\xi_{i}}^{\xi_{i+1}} \xi ^{1-\alpha} d\xi  \bigg)
\bigg(\int_{\xi_{i}}^{\xi_{i+1}}  \xi^{\ap+\veps-2} \,d\xi \bigg) \le Ch^2\sum_{i=1}^{N-1}\int_{\xi_{i}}^{\xi_{i+1}}  \xi^{\veps-1} d\xi \,
\le \, C h^{2}.
\end{eqnarray}
Hence, Case (ii) is proved immediately.
\end{proof}
Comparing Theorems \ref{thm1} and \ref{thm2}, it shows that to obtain the same error estimates, the smoothness requirements of function $u$ is higher, if $\ap \ge 1$.  
Choosing $\veps = 1+\lfloor\ap\rfloor - \ap$ in Theorems \ref{thm1} (i) and \ref{thm2} (i) immediately leads to the following corollary: 

\begin{corollary}
Suppose that $u \in C^{\lfloor\ap\rfloor, 1}(\bar{\Og})$ has finite support on domain $\Og \in {\mathbb R}^d$.  For any $\gm \in (\ap, 2]$, the local truncation error satisfies $\big\|e_{\ap, \gm}^h\big\|_\infty \le Ch^{1+\lfloor\ap\rfloor - \ap}$.
\end{corollary}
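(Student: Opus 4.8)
The plan is to derive this corollary directly from the two preceding theorems by choosing the H\"older exponent that exactly fills the gap between the integer part of $\ap$ and $1$. Since the regularity hypothesis $u \in C^{\lfloor\ap\rfloor, 1}(\bar{\Og})$ means the $\lfloor\ap\rfloor$-th derivatives of $u$ are Lipschitz (i.e. H\"older continuous with exponent $1$), I would set $\veps = 1 + \lfloor\ap\rfloor - \ap$, so that $\ap - \lfloor\ap\rfloor + \veps = 1$; this makes the exponent appearing in the theorem hypotheses equal to $1$.

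First I would split into the two cases $\ap \in (0,1)$ and $\ap \in [1,2)$, matching the dichotomy between Theorem \ref{thm1} and Theorem \ref{thm2}. In the first case $\lfloor\ap\rfloor = 0$, so the hypothesis reads $u \in C^{0,1}(\bar{\Og})$; with $\veps = 1-\ap$ we have $\ap + \veps = 1$, hence $u \in C^{0,\ap+\veps}(\bar{\Og})$, and since $0 < \veps = 1-\ap \le 1-\ap$ the range condition of Theorem \ref{thm1} (i) is met. That theorem then yields $\|e_{\ap,\gm}^h\|_\infty \le C h^\veps = C h^{1-\ap} = C h^{1+\lfloor\ap\rfloor-\ap}$ for every $\gm \in (\ap,2]$. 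In the second case $\lfloor\ap\rfloor = 1$, so the hypothesis reads $u \in C^{1,1}(\bar{\Og})$; with $\veps = 2-\ap$ we have $\ap - 1 + \veps = 1$, hence $u \in C^{1,\ap-1+\veps}(\bar{\Og})$, and $0 < \veps = 2-\ap \le 2-\ap$ satisfies the range condition of Theorem \ref{thm2} (i). That theorem gives $\|e_{\ap,\gm}^h\|_\infty \le C h^\veps = C h^{2-\ap} = C h^{1+\lfloor\ap\rfloor-\ap}$ for every $\gm \in (\ap,2]$. Combining the two cases completes the argument.

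There is essentially no obstacle here: the corollary is a cosmetic repackaging of Theorems \ref{thm1} (i) and \ref{thm2} (i) under a unified hypothesis. The only point requiring a word of care is that the endpoint value $\veps = 1-\ap$ (resp. $\veps = 2-\ap$) is admissible, since both theorems state the hypothesis as $0 < \veps \le 1-\ap$ (resp. $0 < \veps \le 2-\ap$) with a closed upper endpoint, so choosing $\veps$ equal to the endpoint is legitimate and $\veps > 0$ holds because $\ap < 1$ (resp. $\ap < 2$). I would also remark that the constant $C$ is independent of $h$ and, as in the theorems, the conclusion extends verbatim to $d = 2, 3$ and to all $\lambda \ge 0$.
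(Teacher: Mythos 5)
Your proposal is correct and is exactly the paper's argument: the authors state immediately before the corollary that it follows by choosing $\veps = 1+\lfloor\ap\rfloor-\ap$ in Theorem \ref{thm1} (i) and Theorem \ref{thm2} (i), which is precisely your case split with $\veps = 1-\ap$ for $\ap\in(0,1)$ and $\veps = 2-\ap$ for $\ap\in[1,2)$. Your observation that the endpoint value of $\veps$ is admissible because the theorems state a closed upper bound is the only detail worth checking, and you handled it.
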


\begin{figure}[htb!]
\centerline{
(a)\includegraphics[height=4.660cm,width=6.46cm]
{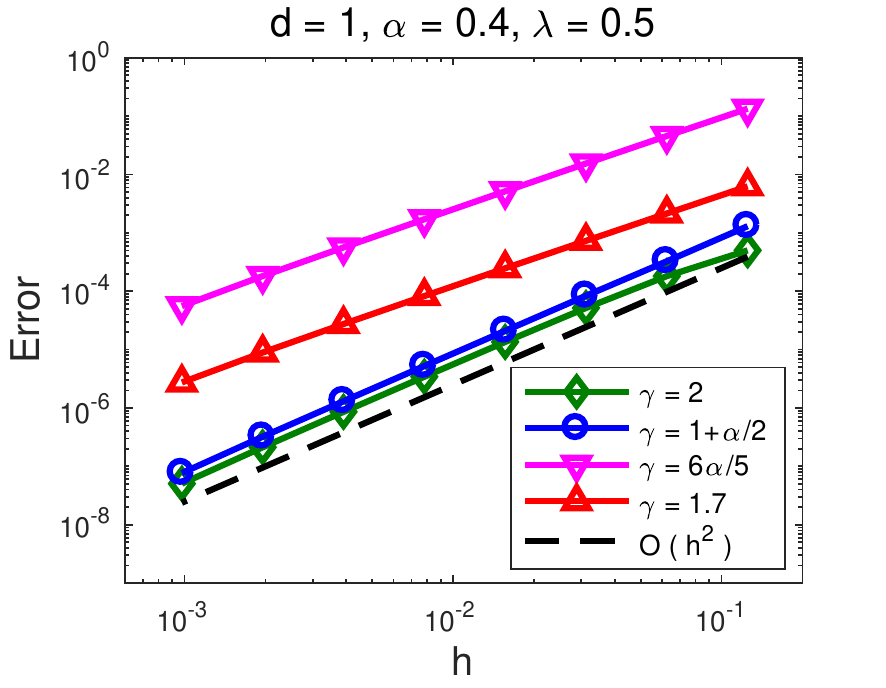}
(b)\includegraphics[height=4.660cm,width=6.46cm]{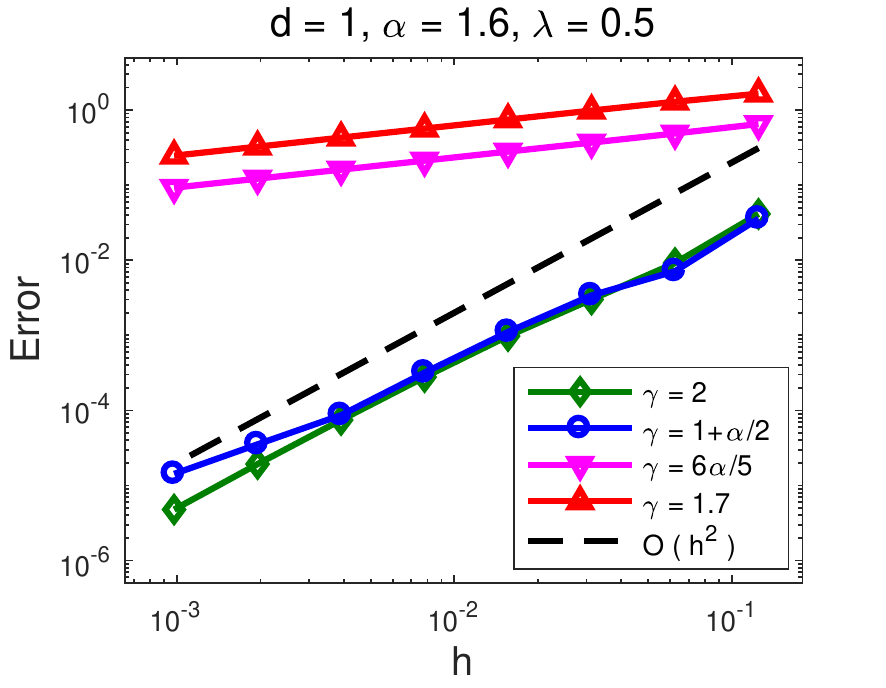}}
\vspace{-1mm}
\centerline{(c) \includegraphics[height=4.660cm,width=6.46cm]{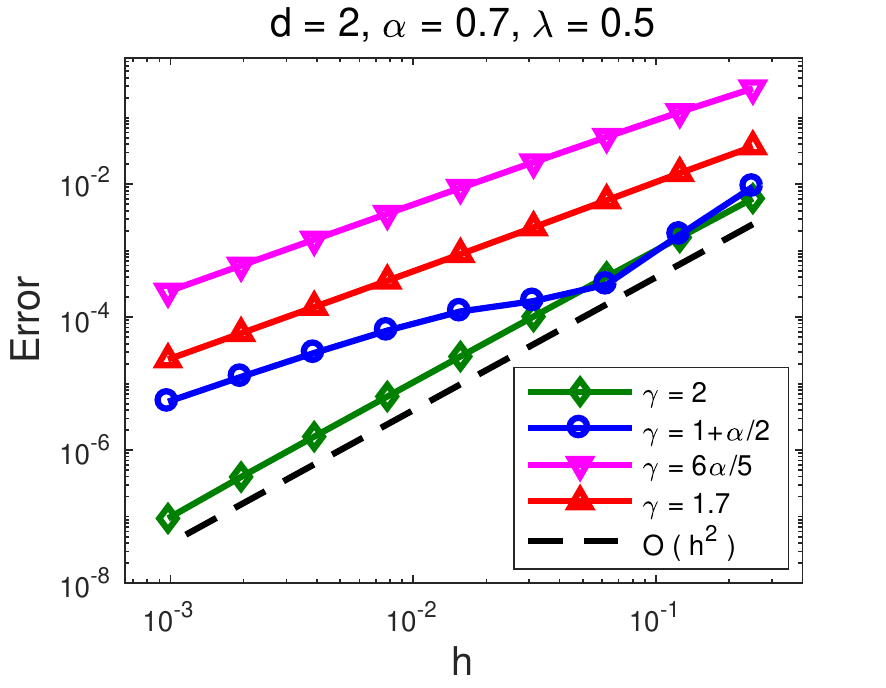}
(d)\includegraphics[height=4.660cm,width=6.46cm]{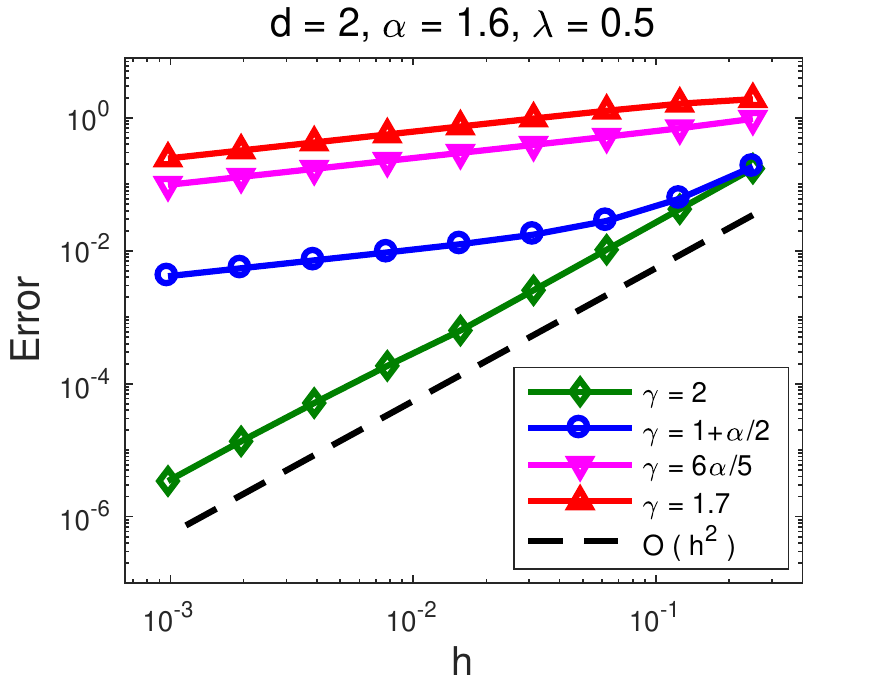}}
\caption{Illustration of the effect of splitting parameter $\gm$, where $u \in C^{2+\lfloor\ap\rfloor, 1}(\bar{\Og})$ and $\lambda = 0.5$.}\label{Fig3}
\end{figure} 
\begin{remark}[Effect of splitting parameter $\gm$] \label{remark3-1}
Theorems \ref{thm1} (ii) and \ref{thm2} (ii) show that the splitting parameter $\gm$ plays an important role in the accuracy of our methods.
\vspace{-.1cm}
\begin{enumerate}\itemsep -4pt
\item[(i)] In one-dimensional cases, there exist two optimal choices of $\gm$ that yield accuracy of ${\mathcal O}(h^2)$:  (i) $\gm = 2$ for any $\ap \in (0, 2)$ and $\lambda \ge 0$; (ii) $\gm = 1+\fl{\ap}{2}$ for $\ap \in (0, 1]$ or $\lambda = 0$; see Fig. \ref{Fig3} (a){\rm\&}(b).

\item[(ii)]  In two- or three-dimensional cases, only the optimal choice $\gm = 2$ leads to   accuracy of ${\mathcal O}(h^2)$; see illustration in Fig. \ref{Fig3} (c){\rm \&}(d). 
\item[(iii)] For other choices of $\gm \in (\ap, 2)$ but not mentioned in (i) and (ii),  our studies show that the accuracy is $\ap$-dependent, i.e., ${\mathcal O}(h^{2-\ap})$.
\end{enumerate}
\end{remark}


\vspace{-3mm}
\section{Numerical accuracy}
\setcounter{equation}{0}
\label{section4}
 
In this section, we test the accuracy of our methods in approximating the operator $(-\Dt + \lambda)^{\fl{\ap}{2}}$ and  solving the fractional  Poisson problems.  
Unless otherwise stated we will use the splitting parameter $\gm = 2$ in all numerical simulations.  
\subsection{Accuracy in approximating $(-\Dt + \lambda)^{\fl{\ap}{2}}$}
\label{section4-1}

In Remarks \ref{remark2-1} and \ref{remark3-1}, we have briefly discussed the accuracy of our methods under different conditions. 
Next, we will carry out further studies to understand their performance  in discretizing  $(-\Dt + \lambda)^{\fl{\ap}{2}}$. 
Here, we  focus on the errors $\|e_{\ap, 2}^h\|_\infty$ and $\|e_{\ap, 2}^h\|_2$ with the error function $e_{\ap, \gm}^h$ defined in (\ref{localerror}).  
Since the analytical solution of $(-\Dt + \lambda)^{\fl{\ap}{2}}u$ remains unknown,  we will use numerical solutions with very fine mesh size, i.e., $h = 1/4096$,  as the ``exact" solutions. 

\bigskip
\noindent{\bf Example 1 (Cases for $\alpha <1$). }
We consider the function
\bea\label{fun2}
u(x, y) = [(1-x^2)_+(1-y^2)_+]^p, \qquad (x, y) \in {\mathbb R}^2, \quad p > 0, 
\eea  
i.e., $u = [(1-x^2)(1-y^2)]^p$ for $(x, y) \in (-1, 1)^2$, otherwise $u = 0$ if $(x, y) \notin (-1, 1)^2$. 
In Fig. \ref{Fig4-1}, we present numerical errors $\|e_{\ap, 2}^h\|_\infty$ and $\|e_{\ap, 2}^h\|_2$ for various $p$, where order lines are included for easy comparison. 
\begin{figure}[htb!]
\centerline{
(a) \includegraphics[height=4.660cm,width=6.46cm]{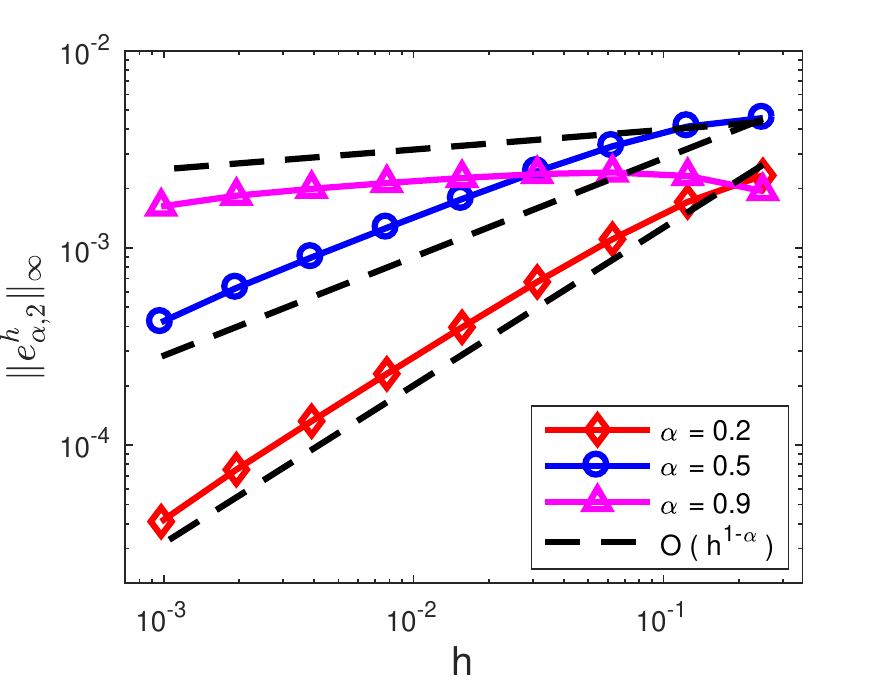}
(b) \includegraphics[height=4.660cm,width=6.46cm]{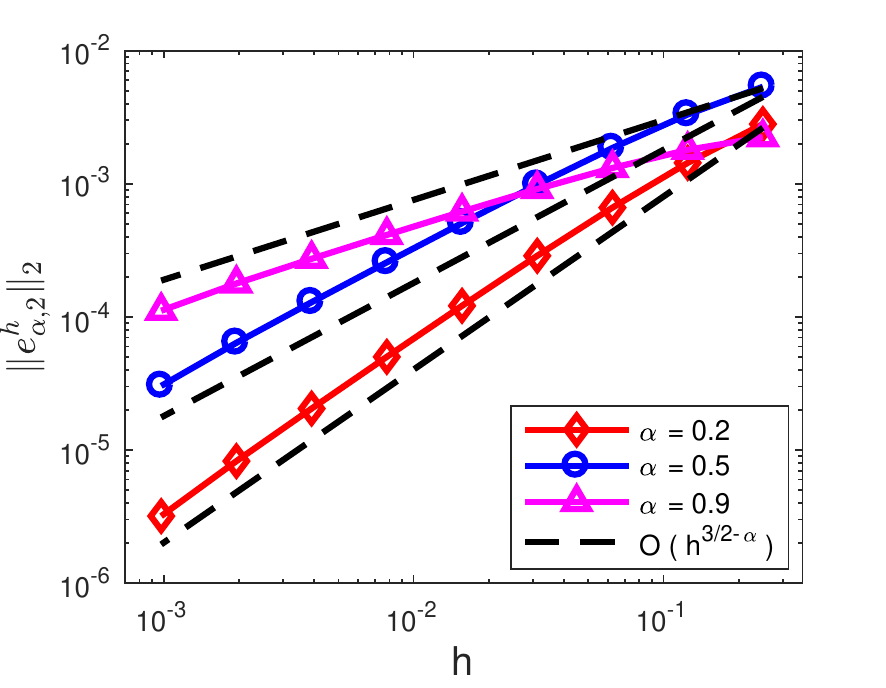}}
\vspace{-1mm}
\centerline{
(c) \includegraphics[height=4.660cm,width=6.46cm]{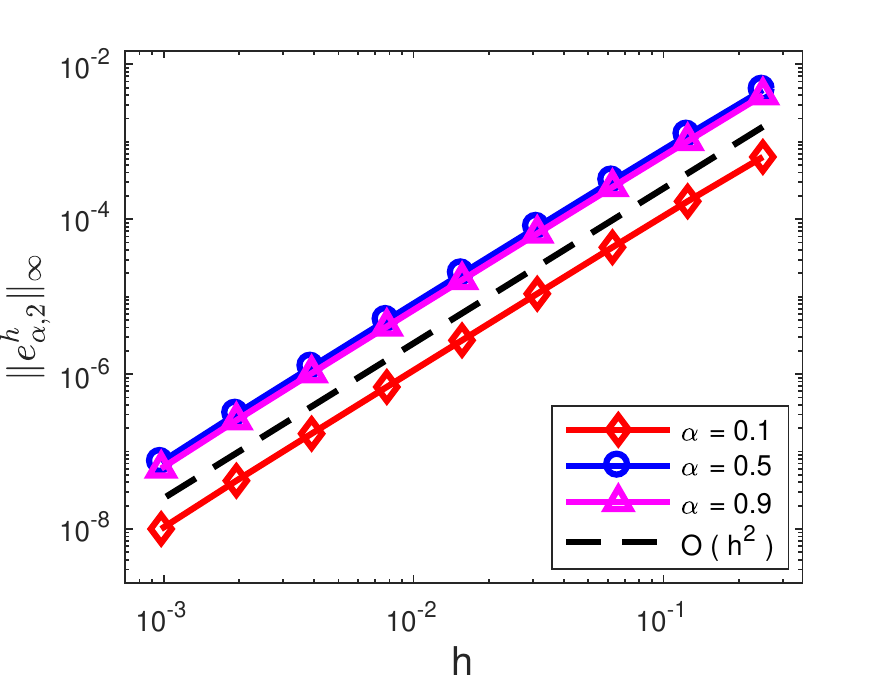}
(d) \includegraphics[height=4.660cm,width=6.46cm]{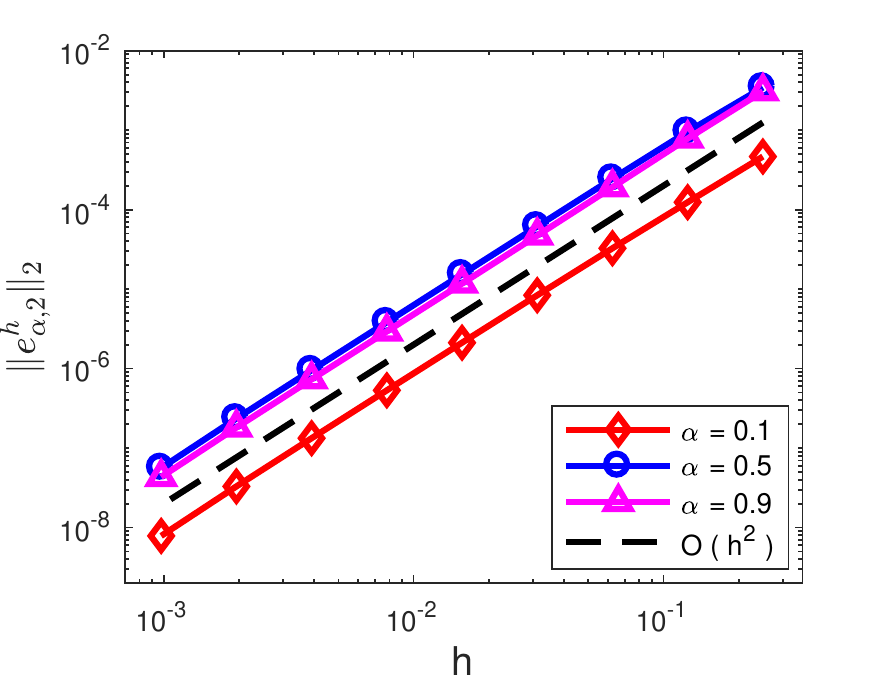}}
\caption{Numerical errors in approximating the operator $(-\Dt + \lambda)^{\fl{\ap}{2}}$ with $\lambda = 0.5$, where  $u$ is defined in {(\ref{fun2})} with $p = 1$ ((a) \& (b)) and $p = 2+\ap + \veps$ and $\veps = 0.05$ ((c) \& (d)).}\label{Fig4-1}
\end{figure}
In Fig. \ref{Fig4-1} (a) \& (b), we choose $p  =1$, i.e., $u \in C^{0,1}({\mathbb R}^2)$. 
It shows that an accuracy of ${\mathcal O}(h^{1-\ap})$ is achieved for all $\ap \in (0, 1)$, confirming our analytical results in Theorem \ref{thm1} (i). 
Additionally, we find that the accuracy in $2$-norm is ${\mathcal O}(h^{\fl{3}{2}-\ap})$,  $1/2$ order higher than that of the $\infty$-norm. 
On the other hand, Fig. \ref{Fig4-1} (c) \& (d) shows numerical errors for  $p = 2+\ap + \veps$, i.e., $u \in C^{2,\alpha+\varepsilon}(\mathbb{R}^2)$,  where $\varepsilon = 0.05$. 
It is clear that our method has the second order of accuracy in both $\infty$- and $2$-norm, independent of parameters $\ap$ and $\lambda$. 
Moreover, our extensive simulations show that  numerical errors are more sensitive to the power $\ap$, but remain almost the same for different damping parameter $\lambda$. 

\begin{figure}[htb!]
\centerline{
(a) \includegraphics[height=4.660cm,width=6.46cm]{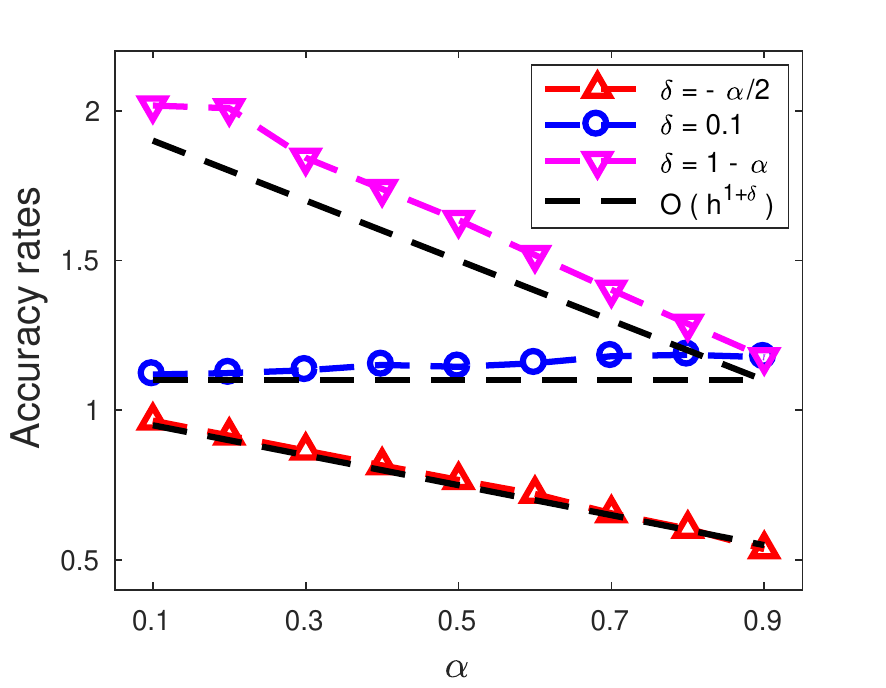}
(b) \includegraphics[height=4.660cm,width=6.46cm]{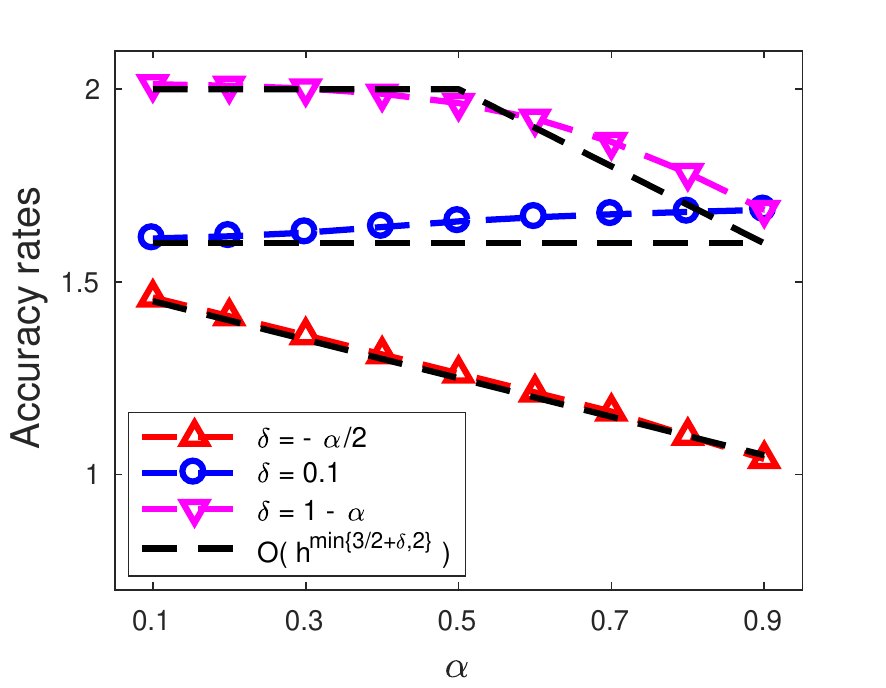}}
\caption{Accuracy rate in $\infty$-norm (a) and $2$-norm (b) of our method in approximating the operator $(-\Dt + \lambda)^\fl{\ap}{2}$ with $\lambda = 0.5$, where $u$ is defined in (\ref{fun2}) with $p = \lceil \ap \rceil + \ap + \dt$. }\label{Fig4-2}
\end{figure} 
Theorem \ref{thm1} predicts the accuracy of our method under the conditions of  $u \in C^{0, \ap+\veps}(\bar{\Og})$ and $u \in C^{2, \ap+\veps}(\bar{\Og})$.
To gain further insights, we study its accuracy if $u \in C^{1,\,\ap + \dt}(\bar{\Og})$ with $-\ap \le \dt \le 1-\ap$, i.e., choosing $u$ in (\ref{fun2}) with $p = \lceil{\ap}\rceil+\ap+\delta$, where $\lceil \cdot \rceil$ denotes the ceiling function.
Fig. \ref{Fig4-2} (a)  shows that our method has the accuracy of ${\mathcal O}(h^{1+\dt})$ in $\infty$-norm. 
Moreover, Fig. \ref{Fig4-2} (b) shows that the accuracy rate in $2$-norm is ${\mathcal O}(h^{\min\{\fl{5}{2}-\ap, 2\}})$.
We remark that in the case of $\dt = -\fl{\ap}{2}$, the accuracy is ${\mathcal O}(h^{1-\fl{\ap}{2}})$, consistent with the error estimates in \cite[Theorem 3.1]{Duo2018} for the fractional Laplacian $(-\Dt)^{\fl{\ap}{2}}$, i.e., $\lambda = 0$. 

%
\bigskip
\noindent{\bf Example 2 (Cases with $\alpha \geq 1$). } 
Again, we consider function $u$ in (\ref{fun2}) and carry out studies for $\ap \ge 1$. 
Fig. \ref{Fig4-3} shows  numerical errors for $\lambda = 0.5$, where $p = 2$ in (a) \& (b) and $p = 2 + \ap + \veps$ with $\veps = 0.05$ in (c) \& (d). 
From Fig. \ref{Fig4-3} (a) \& (c), we find that for $u \in C^{1,1}({\mathbb R}^2)$, our method has an accuracy of ${\mathcal O}(h^{2-\ap})$ in $\infty$-norm, confirming the conclusion in Theorem \ref{thm2} (i) with $\veps = 2 -\ap$. 
Moreover, the accuracy in $2$-norm is $\fl{1}{2}$-order higher, i.e., ${\mathcal O}(h^{\fl{3}{2}-\ap})$.
\begin{figure}[htb!]
\centerline{
(a) \includegraphics[height=4.660cm,width=6.46cm]{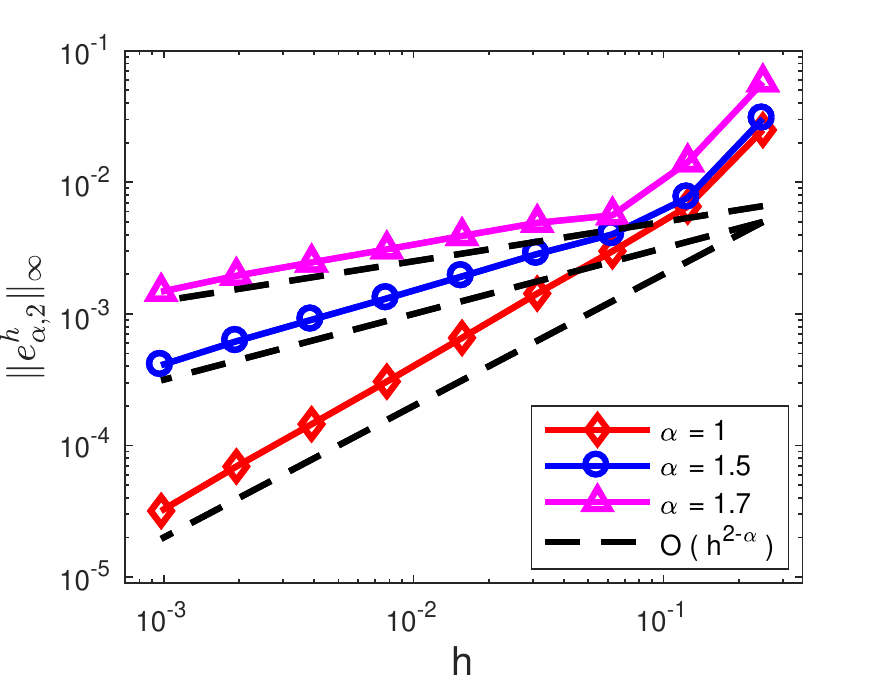}
(b) \includegraphics[height=4.660cm,width=6.46cm]{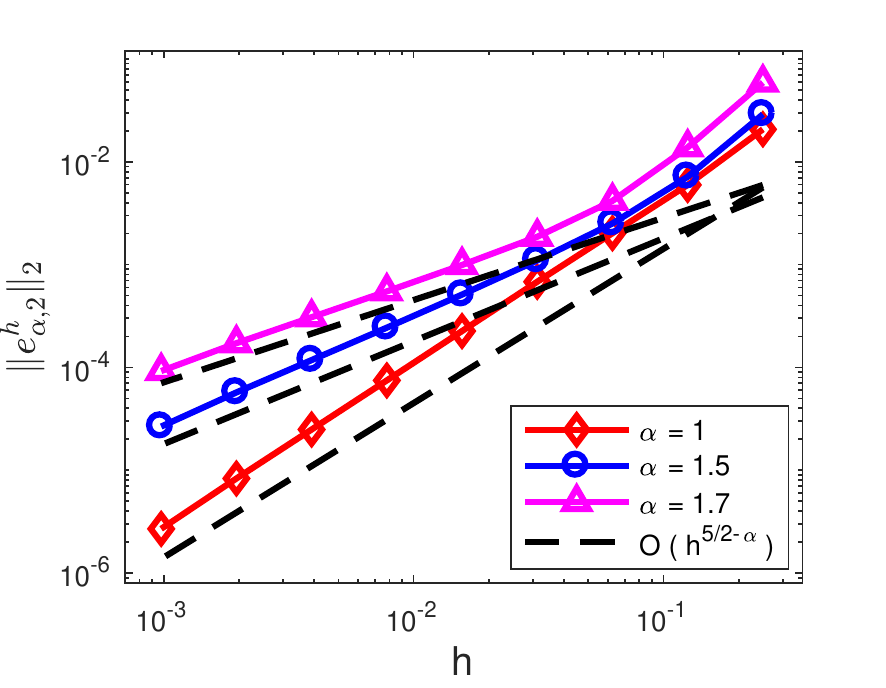}}
\vspace{-1mm}
\centerline{(c) \includegraphics[height=4.660cm,width=6.46cm]{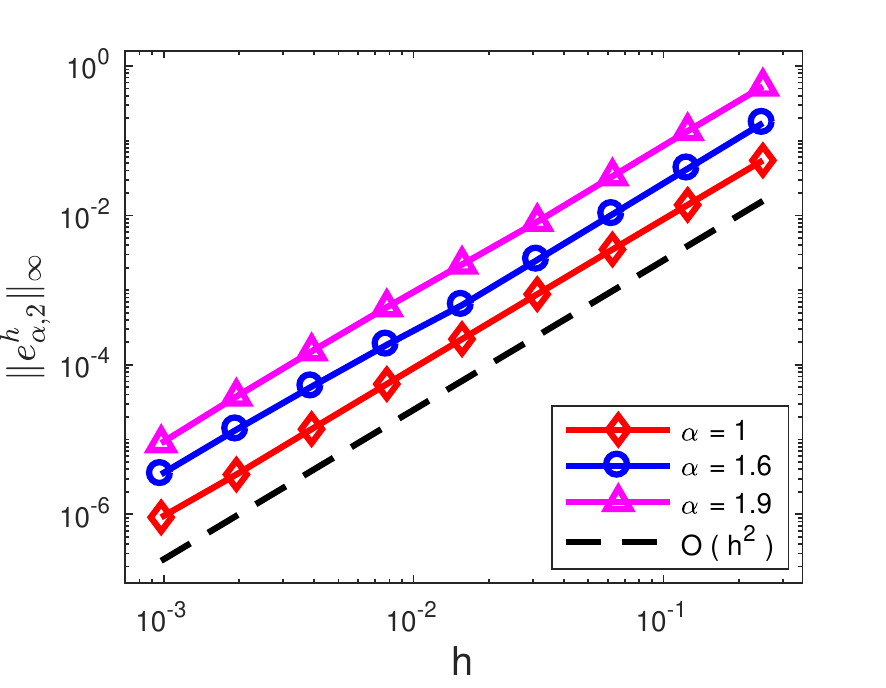}
(d) \includegraphics[height=4.660cm,width=6.46cm]{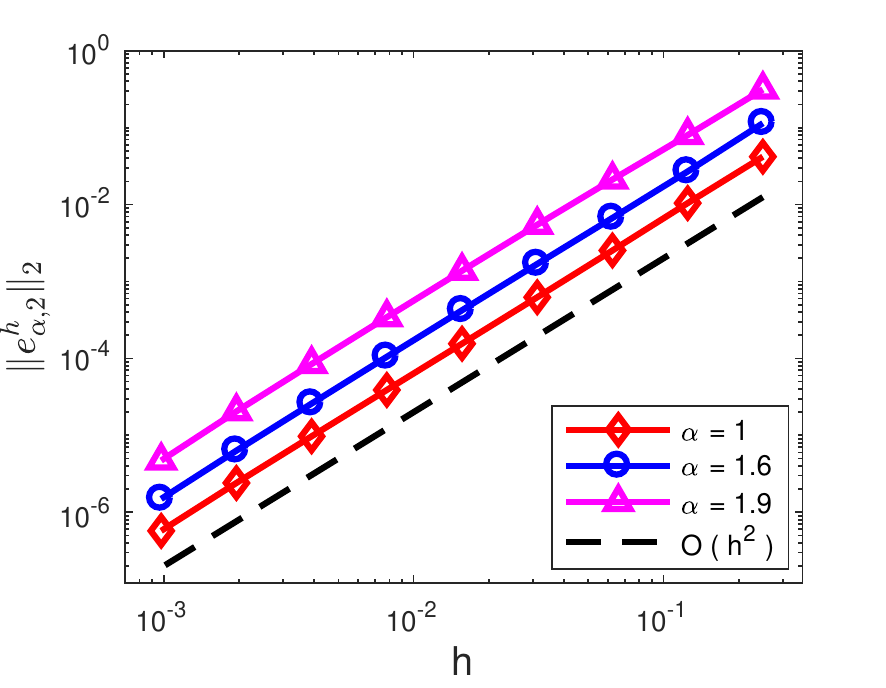}}
\caption{Numerical errors in approximating the operator $(-\Dt + \lambda)^{\fl{\ap}{2}}$ with $\lambda = 0.5$, where  $u$ is defined in (\ref{fun2}) with $p = 2$ ((a) \& (b)) and $p = 2+\ap + \veps$ and $\veps = 0.05$ ((c) \& (d)).}\label{Fig4-3}
\end{figure}
Similar to the case of $\ap < 1$,  the larger the value of $\ap$, the bigger the numerical errors. 
On the other hand, Fig. \ref{Fig4-3} (c) \& (d) not only verify our conclusions in Theorem \ref{thm2} (ii)  but also suggest that the accuracy of our method in $2$-norm is also ${\mathcal O}(h^2)$.

\subsection{Accuracy in solving fractional Poisson problems}
\label{section4-2}

In the following, we test the performance of our method in solving the fractional Poisson problem:  
\bea\label{Poisson}
(-\Dt + \lambda)^{\fl{\ap}{2}}u = f(\bx), &\ & \bx \in (-1, 1)^d,\\ 
\label{BCP}
u(\bx) = 0, && \bx \in {\mathbb R}^d\backslash (-1, 1)^d.
\eea
Our extensive studies show that the same conclusions can be obtained from solving any $d$-dimensional ($d \ge 1$) Poisson problem (\ref{Poisson})--(\ref{BCP}).  
For the purpose of brevity, we will thus focus on the examples with $d = 2$.
Here, the numerical errors are computed as $\|e_u\|_p = \|u - u_h\|_p$, 
where $u$ and $u_h$ denote the exact and numerical solutions of (\ref{Poisson})--(\ref{BCP}), respectively.

\vskip 8pt
\noindent{\bf Example 3.}  We consider a benchmark example  in \cite[Example 3]{Sun0018} -- the fractional Poisson problem (\ref{Poisson})--(\ref{BCP}) with exact solution $u$ defined as in (\ref{fun2}) with $p = 2$.
It is easy to verify that the solution satisfies $u \in C^{1,1}({\mathbb R}^2)$.
In practice, the function $f$ in (\ref{Poisson})  is prepared numerically with a fine mesh size $h_x = h_y = 2^{-12}$, i.e., computing $f = (-\Dt+\lambda)^{\fl{\ap}{2}}u$ with exact $u$. 
\begin{table}[htb!]
\begin{center}
\begin{tabular}{|c|l r r r r r r|}
\hline
\diagbox[width=3em]{$\ap$}{$h$} && 1/16 & 1/32 & 1/64 & 1/128 & 1/256  & 1/512 \\
\hline
\multirow{4}{*}{$0.5$} & \multirow{2}{*}{$\|e_u\|_\infty$}&  6.117E-4 &   1.498E-4 & 3.699E-5 & 9.176E-6 &  2.278E-6 & 5.613E-7  \\
& & c.r. & 2.0295 & 2.0179 & 2.0114 & 2.0103 & 2.0206  \\
\cline{2-8}
&  \multirow{2}{*}{$\|e_u\|_2$}& 4.642E-4 & 1.141E-4 & 2.842E-5 &  7.106E-6 & 1.774E-6 & 4.387E-7  \\
& & c.r. & 2.0249 & 2.0046 & 2.0000 & 2.0024 & 2.0155  \\
\hline
\multirow{4}{*}{$1$} & \multirow{2}{*}{$\|e_u\|_\infty$}& 1.001E-3 &  2.389E-4 & 5.785E-5 & 1.414E-5 & 3.471E-6 & 8.487E-7 \\
& & c.r. &2.0661 &2.0462 &2.0328 &2.0261 &2.0321  \\
\cline{2-8}
&  \multirow{2}{*}{$\|e_u\|_2$}&  7.961E-4 & 1.864E-4 & 4.522E-5 &  1.117E-5 &  2.774E-6 & 6.852E-7  \\
& & c.r. &2.0945 &2.0434 &2.0179 &2.0092 &2.0172  \\
\hline
\multirow{4}{*}{$1.5$} & \multirow{2}{*}{$\|e_u\|_\infty$}& 1.669E-3 &  3.893E-4 &  9.177E-5 & 2.184E-5 & 5.229E-6 & 1.250E-6  \\
& & c.r. & 2.1004 & 2.0848 & 2.0714 & 2.0622 & 2.0645 \\
\cline{2-8}
&  \multirow{2}{*}{$\|e_u\|_2$}&  1.506E-3 & 3.361E-4 & 7.660E-5 &  1.783E-5 &  4.218E-6 & 1.004E-6  \\
& & c.r. & 2.1642 & 2.1332 & 2.1032 & 2.0794 & 2.0705 \\
\hline
\end{tabular}
\caption{Numerical errors in solving the 2D Poisson problem (\ref{Poisson})--(\ref{BCP}) with $\lambda = 0.5$, where $f$ is chosen such that the exact solution is $u(\bx) = (1-x^2)_+^2(1-y^2)_+^2$.}
\label{Tab1}
\end{center}\vspace{-5mm}
\end{table}
In Table \ref{Tab1}, we present numerical errors $\|e_u\|_\infty$ and $\|e_u\|_2$ for various $\ap$, where $\lambda = 0.5$.
It shows that even though the solution $u \in C^{1, 1}({\mathbb R}^2)$, our method achieves the  accuracy of ${\mathcal O}(h^2)$, {\it uniformly} for any $\ap \in (0, 2)$. 
In other words, to obtain the second-order accuracy,  the regularity  that is required on the solution of fractional Poisson problems is much lower than that required in approximating the operator $(-\Dt+\lambda)^{\fl{\ap}{2}}$ in Theorems \ref{thm1} and \ref{thm2}. 
This observation is consistent with the central difference scheme for the classical Laplace operator $\Dt$. 
Additionally, Fig. \ref{Fig7-1} compares the numerical errors for different $\lambda$, where $\ap = 0.5$ and $1.5$. 
It shows that for small $\ap$, the smaller the parameter $\lambda$,  the less the numerical errors, and numerical errors for $\lambda = 0$ are minimized. 
However,  numerical errors become insensitive to $\lambda$ as $\ap$ increases. 
\begin{figure}[htb!]
\centerline{
(a) \includegraphics[height=4.66cm,width=6.46cm]{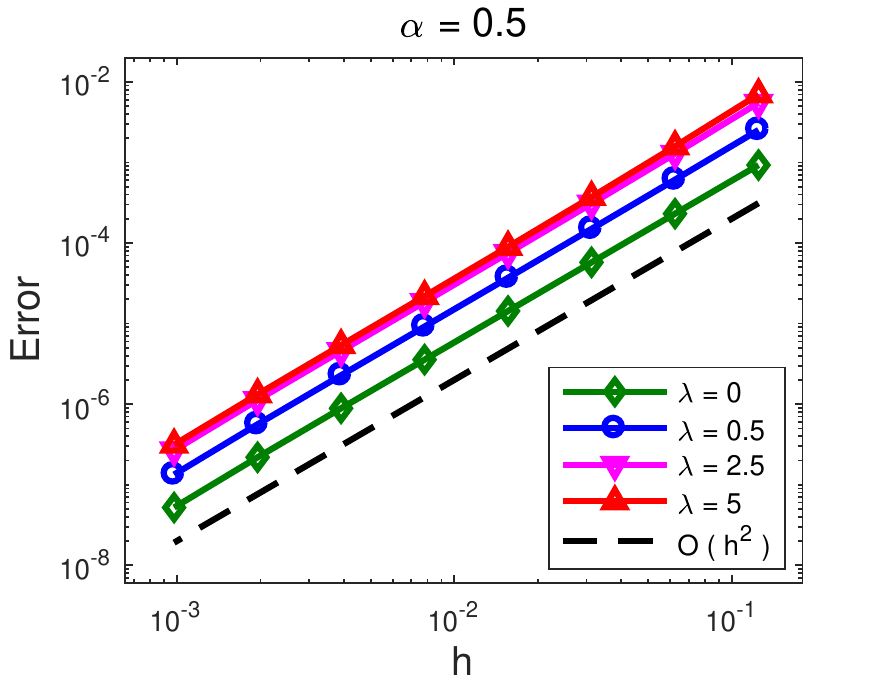}
(b) \includegraphics[height=4.66cm,width=6.46cm]{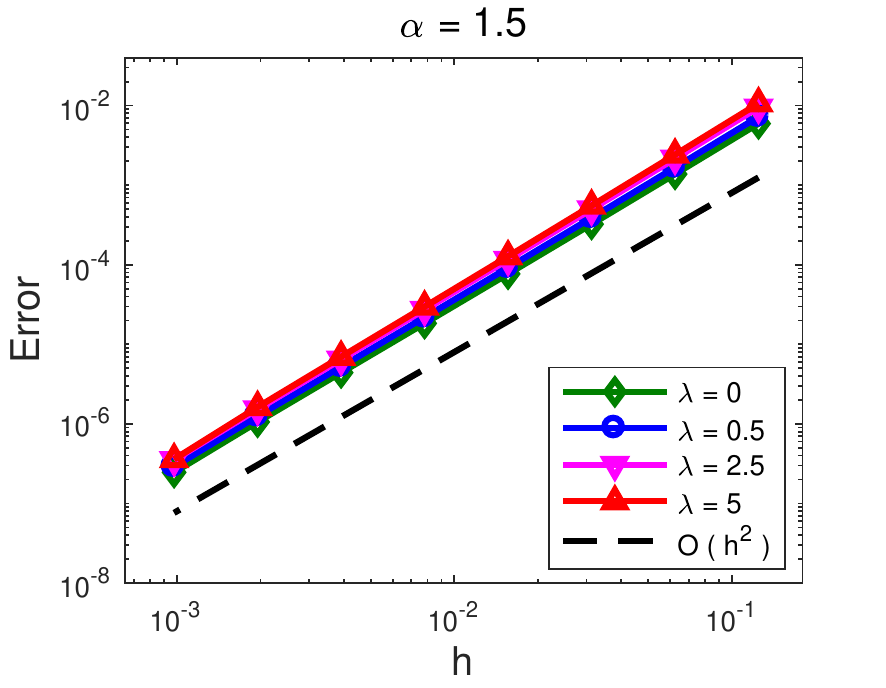}}
\caption{Numerical errors $\|e_u\|_\infty$ in solving the fractional Poisson problem in Example 3.}  \label{Fig7-1}
\end{figure}

Moreover, we compare the errors in Table \ref{Tab1} with those in \cite[Table 8]{Sun0018} to further study the performance of our methods. 
We find that: (i) The method in \cite{Sun0018}  has accuracy of ${\mathcal O}(h^{2-\ap})$. 
In contrast, our accuracy is ${\mathcal O}(h^{2})$ for any $\ap \in (0, 2)$. 
(ii) For fixed $\ap$, $\lambda$ and $h$, numerical errors of our method are much smaller than those  in \cite[Table 8]{Sun0018}. 
For example, when $\ap = 1.5$, $\lambda = 0.5$,  and $h = 1/256$, our method yields $\|e_u\|_\infty = 5.229$E-6, but a much larger error $\|e_u\|_\infty =5.1048$E-2 from the method in \cite[Table 8]{Sun0018}.  
As discussed in Remark \ref{remark2-1}, including of the damping term $e^{-\lambda|\bxi|}$ in  the weight function is crucial in the design of an accurate finite difference method for the tempered fractional Laplacian.
The above comparisons further confirm our conclusions in  Remark \ref{remark2-1}. 

From the above results and our extensive studies, we conclude that to obtain the second-order accuracy in solving fractional Poisson problems, our method requires the solution $u\in C^{1,1}(\bar{\Og})$  {\it at most}.
In the following, we will test the performance of our method in solving the Poisson problem (\ref{Poisson})--(\ref{BCP}), when its solution has lower regularity than that in Example 3. 

\bigskip
\noindent{\bf Example 4.}  We solve the problem (\ref{Poisson})--(\ref{BCP}) with the exact solution $u(\bx) = (1-x^2)_+(1-y^2)_+$,
i.e., the solution $u \in C^{0, 1}({\mathbb R}^2)$.  
The right hand side function $f$ is computed in the same manner as that in Example 3. 
Table \ref{Tab2} shows numerical errors $\|e_u\|_\infty$ and $\|e_u\|_2$ for various $\ap$, where $\lambda = 0.5$.
\begin{table}[htb!]
\begin{center}
\begin{tabular}{|c|c| r r r r r r|}
\hline
\diagbox[width=3em]{$\ap$}{$h$} && 1/16 & 1/32 & 1/64 & 1/128 & 1/256  & 1/512 \\
\hline
\multirow{4}{*}{$0.5$} & \multirow{2}{*}{$\|e\|_\infty$}& 2.692E-3 &  1.284E-3 & 6.072E-4 & 2.881E-4 & 1.376E-4 & 6.569E-5 \\
& & c.r. & 1.0677 & 1.0806 & 1.0753 & 1.0661 & 1.0669  \\
&  \multirow{2}{*}{$\|e\|_2$}& 2.732E-3 & 1.020E-3 & 3.622E-4 &  1.248E-4 &  4.225E-5 & 1.406E-5  \\
& & c.r. & 1.4213 & 1.4939 & 1.5368 & 1.5631 & 1.5870  \\
\hline
\multirow{4}{*}{$1$} & \multirow{2}{*}{$\|e\|_\infty$}& 2.688E-3 &    1.370E-3 & 6.836E-4 & 3.387E-4 & 1.673E-4 & 8.197E-5 \\
& & c.r. &0.9722 &1.0032 &1.0131 &1.0171 &1.0296 \\
&  \multirow{2}{*}{$\|e\|_2$}& 3.188E-3 &  1.370E-3 & 5.546E-4 &  2.162E-4 &  8.210E-5 & 3.040E-5  \\
& & c.r. &1.2188 &1.3045 &1.3591 &1.3968 &1.4331  \\
\hline
\multirow{4}{*}{$1.8$} & \multirow{2}{*}{$\|e\|_\infty$}& 8.247E-4 &   4.620E-4 & 2.467E-4  & 1.272E-4 & 6.444E-5 & 3.202E-5 \\
& & c.r. & 0.8358 & 0.9054 & 0.9557 & 0.9809 & 1.0088  \\
&  \multirow{2}{*}{$\|e\|_2$}& 1.315E-3 & 7.216E-4 & 3.718E-4 &  1.840E-4 &  8.870E-5 & 4.167E-5  \\
& & c.r. & 0.8660 & 0.9568 & 1.0144 & 1.0531 & 1.0897 \\
\hline
\end{tabular}
\caption{Numerical errors in solving the 2D Poisson problem (\ref{Poisson})--(\ref{BCP}) with $\lambda = 0.5$, where $f$ is chosen such that the exact solution is $u(\bx) = (1-x^2)_+(1-y^2)_+$.
}
\label{Tab2}
\end{center}
\end{table}
From it,  we find that the accuracy  in $\infty$-norm is ${\mathcal O}(h)$, independent of the values of $\ap$ and $\lambda$. 
The accuracy rate in 2-norm is higher -- the smaller the value of $\alpha$, the higher the accuracy rate in $2$-norm.  
As $\ap \to 2^-$, the accuracy in $2$-norm becomes ${\mathcal O}(h)$. 
From Table \ref{Tab2} and our extensive simulations, we find that if the solution satisfies $u \in C^{0, s}(\bar{\Og})$ for $s\in(0, 1]$, our method has the accuracy in $\infty$-norm ${\mathcal O}(h^s)$ in solving the fractional Poisson problem. 

\bigskip
\noindent{\bf Example 5.}  We solve the problem (\ref{Poisson})--(\ref{BCP}) with $f = 1$. 
In this case, the regularity of the solution is much lower.  
Table \ref{Tab3} shows numerical errors $\|e_u\|_\infty$ and $\|e_u\|_2$ for various $\ap$, where $\lambda = 0.5$. 
Even though the regularity of the solution in this case is much lower than those in Examples 3--4, our method approximates it with a reasonable accuracy.
\begin{table}[htb!]
\begin{center}
\begin{tabular}{|c|l r r r r r r|}
\hline
\diagbox[width=3em]{$\ap$}{$h$} && 1/16 & 1/32 & 1/64 & 1/128 & 1/256  & 1/512 \\
\hline
\multirow{4}{*}{$0.5$} & \multirow{2}{*}{$\|e_u\|_\infty$}&  1.737E-1 &  1.312E-1 &  1.013E-1 &  7.987E-2 &  6.401E-2 &  5.196E-2 \\
& & c.r. & 0.4048 & 0.3724 & 0.3434 & 0.3195 & 0.3008 \\
&  \multirow{2}{*}{$\|e_u\|_2$}& 1.935E-1 & 1.110E-1 & 6.248E-2 &  3.488E-2 &  1.944E-2 &  1.086E-2 \\
& & c.r. & 0.8016 & 0.8289 & 0.8412 & 0.8436 & 0.8396 \\
\hline
\multirow{4}{*}{$1$} & \multirow{2}{*}{$\|e_u\|_\infty$}& 2.634E-2 & 1.798E-2 & 1.238E-2 & 8.593E-3 & 6.000E-3 &  4.209E-3 \\
& & c.r. & 0.5509 & 0.5381 & 0.5269 & 0.5181 & 0.5117 \\
&  \multirow{2}{*}{$\|e_u\|_2$}& 3.380E-2 & 1.875E-2 & 1.014E-2 & 5.392E-3 & 2.836E-3 & 1.481E-3  \\
& & c.r. & 0.8504 & 0.8868 & 0.9109 & 0.9269 & 0.9377 \\
\hline
\multirow{4}{*}{$1.8$} & \multirow{2}{*}{$\|e_u\|_\infty$}& 2.917E-3 & 1.560E-3 & 8.433E-4 & 4.545E-4 & 2.442E-4 & 1.310E-4 \\
& & c.r. & 0.9030 & 0.8874 & 0.8917 & 0.8961 & 0.8985 \\
&  \multirow{2}{*}{$\|e_u\|_2$}& 5.271E-3 & 2.711E-3 & 1.390E-3 & 7.080E-4 & 3.584E-4 & 1.806E-4  \\
& & c.r. & 0.9590 & 0.9637 & 0.9735 & 0.9824 & 0.9888  \\
\hline
\end{tabular}
\caption{Numerical errors in solving the 2D Poisson problem (\ref{Poisson})--(\ref{BCP}) with $\lambda = 0.5$ and $f = 1$.}
\label{Tab3}
\end{center}\vspace{-5mm}
\end{table}
It shows that the accuracy in $\infty$-norm is ${\mathcal O}(h^{\fl{\ap}{2}})$ for $\ap \in (0, 2)$, while ${\mathcal O}(h^{\min\{1, \fl{1}{2}+\fl{\ap}{2}\}})$ in 2-norm, that is, the $2$-norm errors for $\ap \ge 1$ are uniformly ${\mathcal O}(h)$.  
%

\section{Applications to tempered fractional PDEs}
\setcounter{equation}{0}
\label{section5}

In this section, we apply our methods to solve various fractional problems with the tempered operator $(-\Dt + \lambda)^{\fl{\ap}{2}}$, so as to study the effects of the fractional power $\ap$ and damping constant $\lambda$ on their solutions. 
In the following applications,  the spatial discretization is done by our finite difference methods, and the temporal discretization is realized by the Crank--Nicolson method. 
In practice, fast algorithms via the fast Fourier transform are used for efficient computations, and at each time step the computational costs are ${\mathcal O}(M\log M)$ with $M$ the number of spatial unknowns.

\subsection{Fractional Allen--Cahn equation}
\label{section5-2}

Consider the two-dimensional tempered fractional Allen--Cahn equation of the form: 
\begin{eqnarray}\label{AC-1}
	\p_tu(\bx, t) = -(-\Delta + \lambda)^{\fl{\alpha}{2}}u - \frac{1}{\varepsilon^\ap}\,u(u^2-1), \ && \bx \in \Omega = (0, 1)^2, \quad t>0,\qquad\qquad \\ \label{AC-2}
	u(\bx, t) = -1, \  && \bx \in \Omega^{c}, \quad t\geq 0, 
\end{eqnarray}
where $\varepsilon > 0$ describes the diffuse interface width. 
In the special case with $\lambda = 0$ and $\ap \to 2^-$,   (\ref{AC-1})--(\ref{AC-2}) reduces to the well-known classical Allen--Cahn equation -- one of the most popular phase field models in materials science and fluid dynamics. 
Here, we study the coalescence of two ``kissing" bubbles, a benchmark problem in the phase field models. 
We take the initial condition as 
\beas\label{u0-two_bubble}
	u(\bx, 0) = 
		1- \tanh\bigg(\frac{|\bx - \bx_1| - {0.12}}{{\varepsilon}} \bigg) - \tanh\bigg(\frac{|\bx - \bx_2| - {0.12}}{{\varepsilon}} \bigg), \qquad \bx \in \Og
\eeas
with $\bx_1$ and $\bx_2$ denoting the initial center of  two bubbles, which are chosen such that 
two bubbles are initially osculating or ``kissing". 
Note that the boundary condition in (\ref{AC-2}) is nonzero constant. 
Letting $\bar{u} = u + 1$, we can rewrite the problem  (\ref{AC-1}) as an equation of $\bar{u}$ with the extended homogeneous boundary conditions, so that our method can be directly applied.
In our simulations, we choose the mesh size $h = 1/1024$ and the time step $\tau = 0.0005$.  
\begin{figure}[htb!]
\centerline{
\includegraphics[height=3.36cm,width=3.480cm]{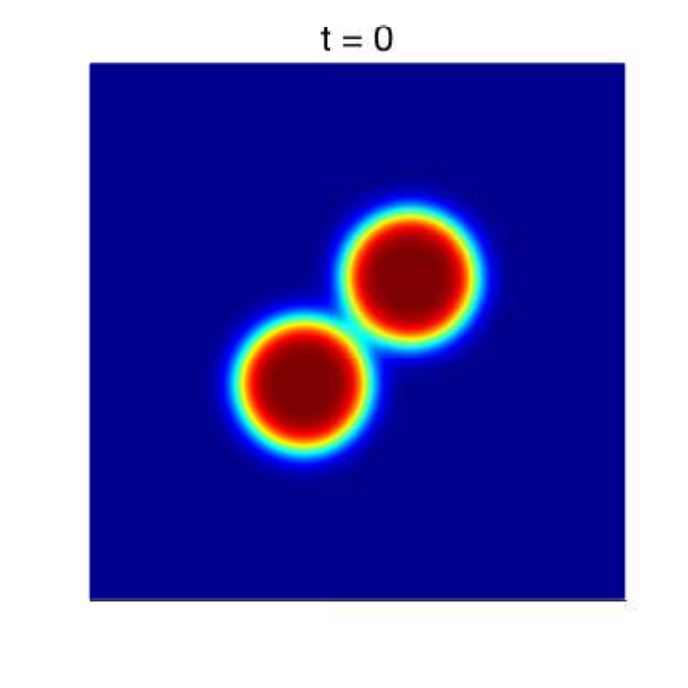}\hspace{-4mm}
\includegraphics[height=3.36cm,width=3.480cm]{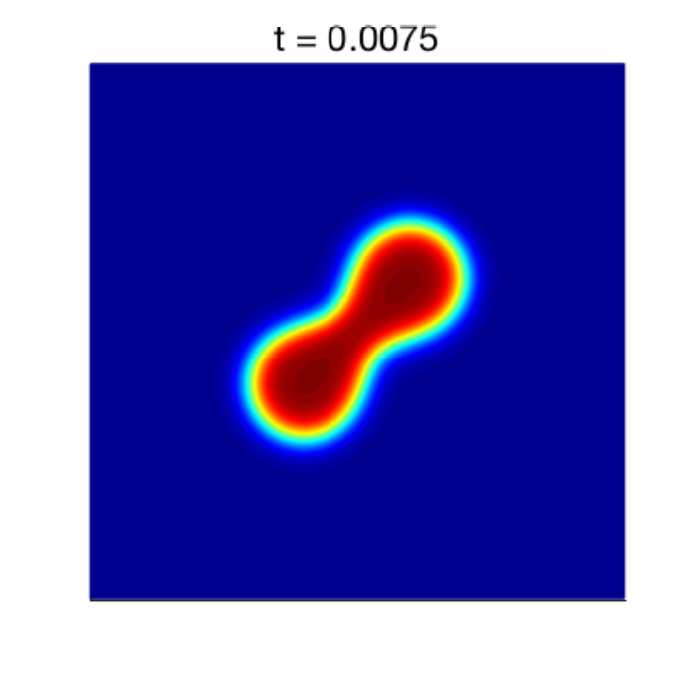}\hspace{-4mm}
\includegraphics[height=3.36cm,width=3.480cm]{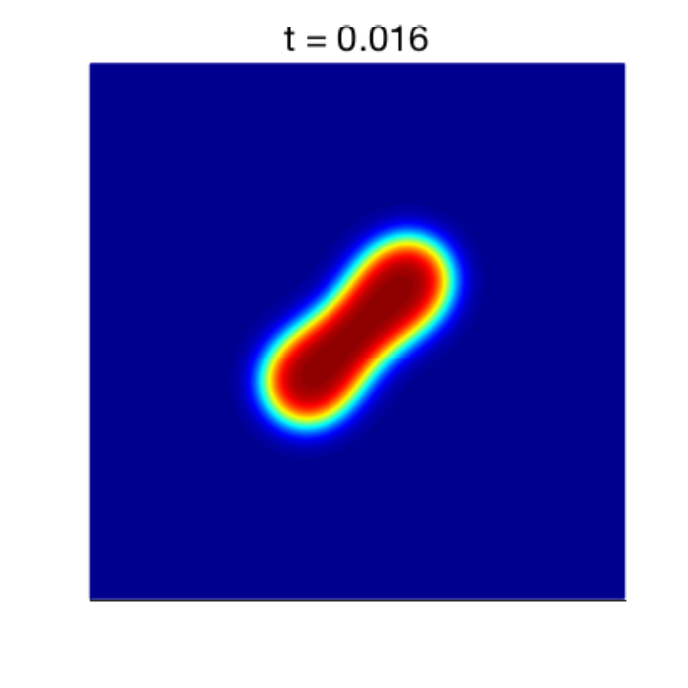}\hspace{-4mm}
\includegraphics[height=3.36cm,width=3.480cm]{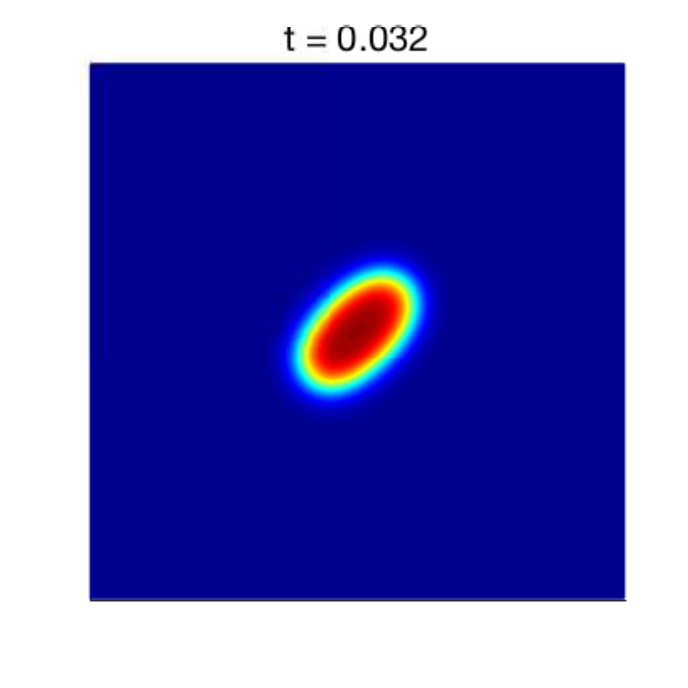}}
\vspace{-5mm}
\centerline{
\includegraphics[height=3.36cm,width=3.480cm]{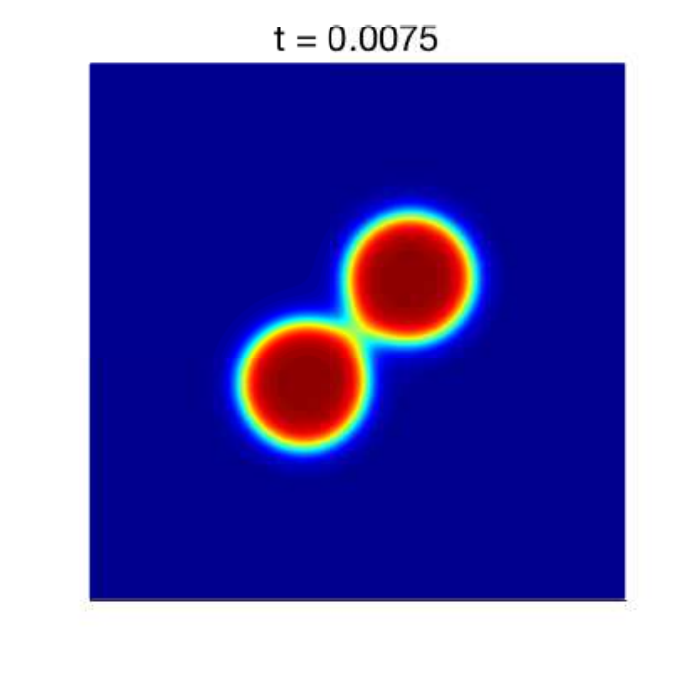}\hspace{-4mm}
\includegraphics[height=3.36cm,width=3.480cm]{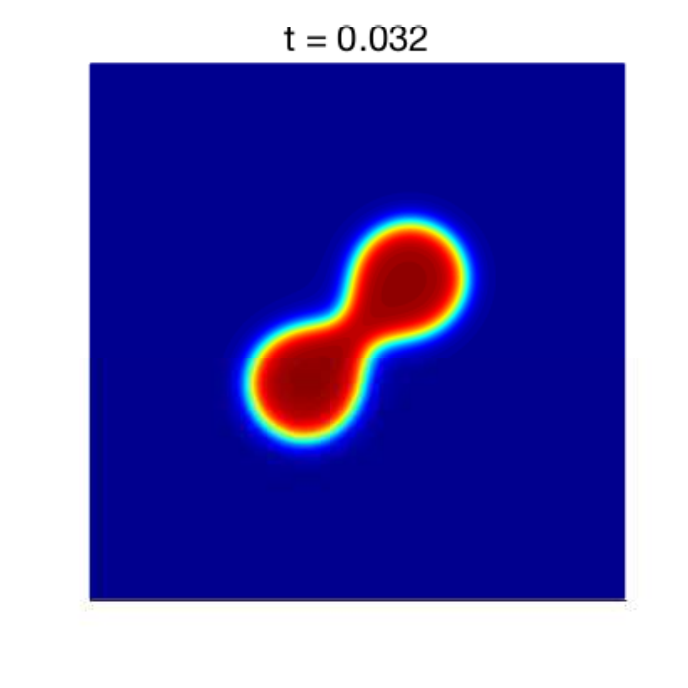}\hspace{-4mm}
\includegraphics[height=3.36cm,width=3.480cm]{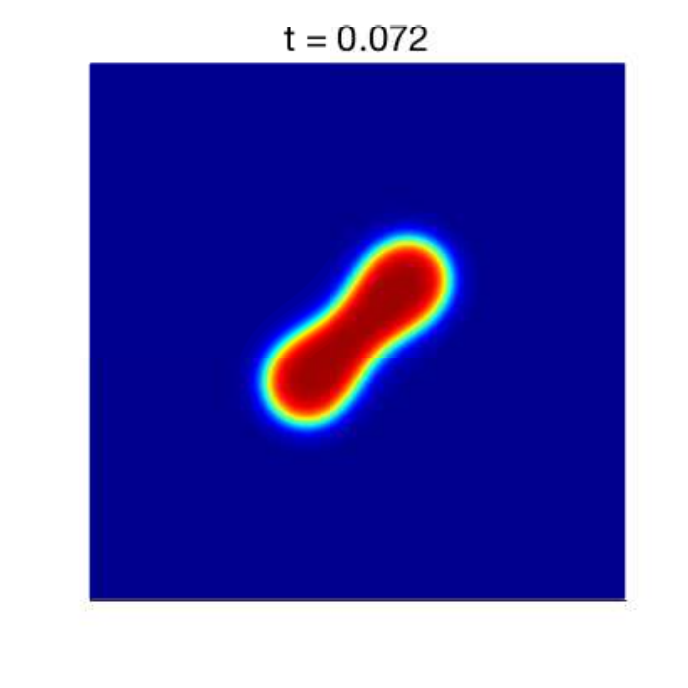}\hspace{-4mm}
\includegraphics[height=3.36cm,width=3.480cm]{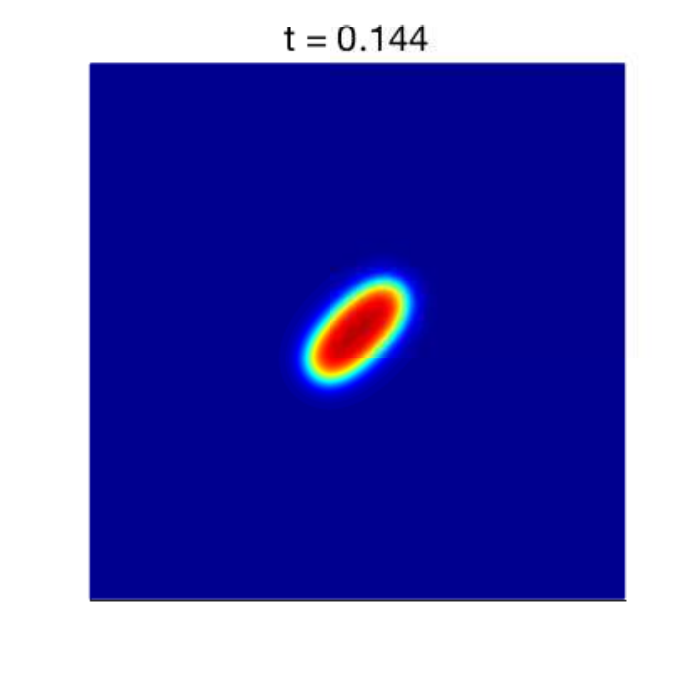}}
\vspace{-5mm}
\centerline{
\includegraphics[height=3.36cm,width=3.480cm]{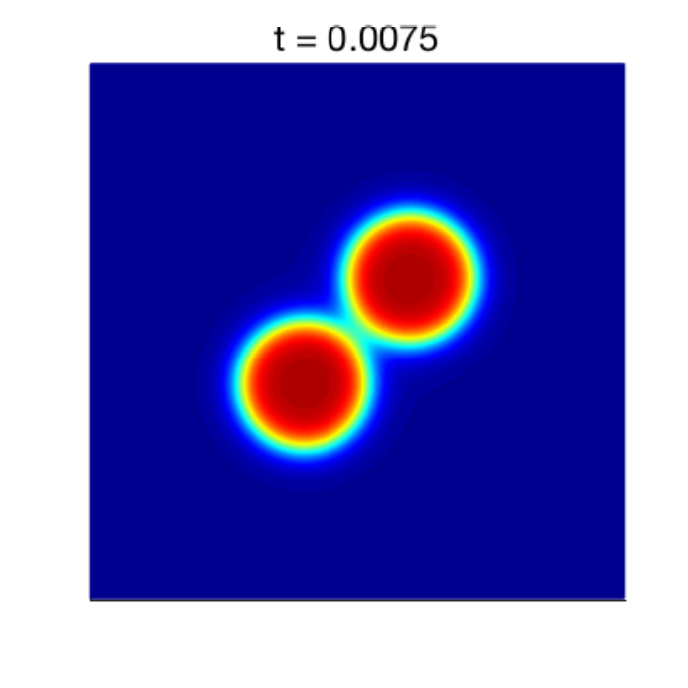}\hspace{-4mm}
\includegraphics[height=3.36cm,width=3.480cm]{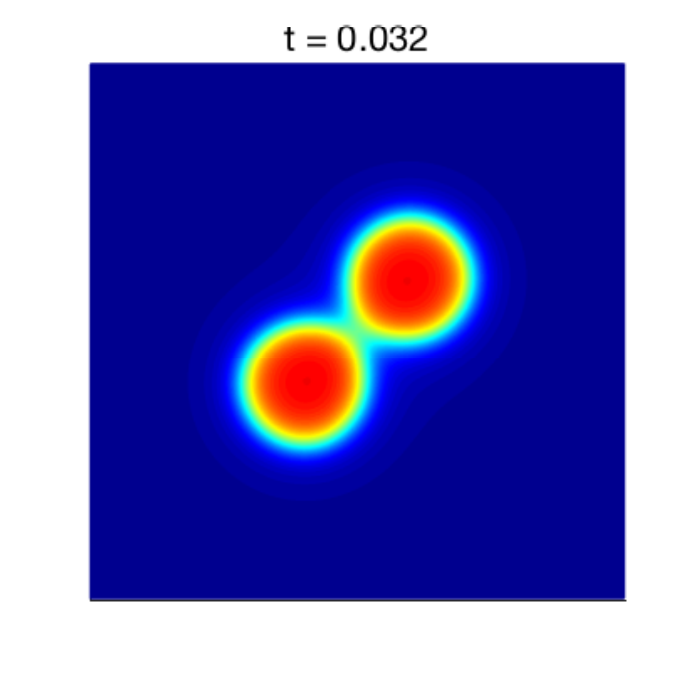}\hspace{-4mm}
\includegraphics[height=3.36cm,width=3.480cm]{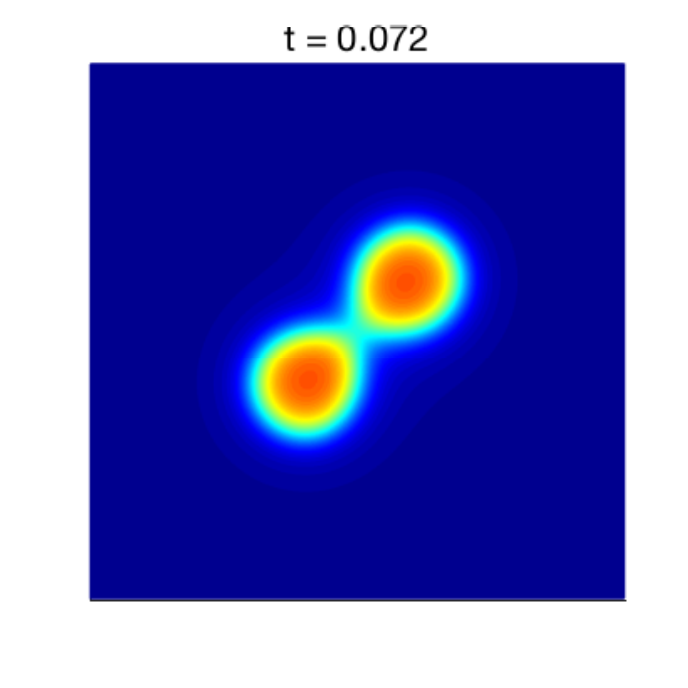}\hspace{-4mm}
\includegraphics[height=3.36cm,width=3.480cm]{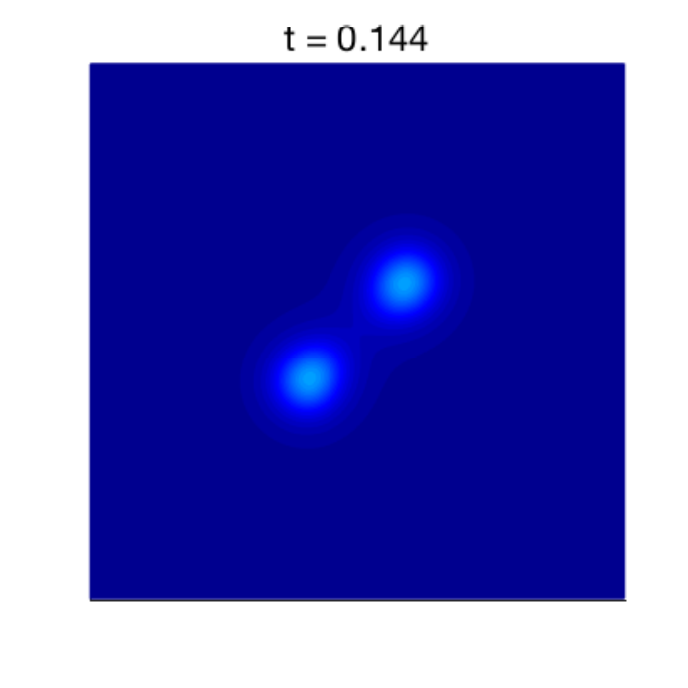}}
\vspace{-5mm}
\caption{Dynamics of the two kissing bubbles in the two-dimensional tempered fractional Allen--Cahn equation with $\varepsilon = 0.03$ and $\lambda = 0.2$.  
From top to bottom:  $\alpha = 1.9, 1.5, 1$. }\label{Fig-AC1}
\end{figure}

Figs. \ref{Fig-AC1}--\ref{Fig-AC2} show the time evolution of two bubbles in (\ref{AC-1})--(\ref{AC-2}) with $\varepsilon = 0.03$,  for various $\ap$ and $\lambda$.
Initially,  two bubbles are centered at $\bx_1 = (0.4, 0.4)$ and $\bx_2 = (0.6, 0.6)$, respectively.
It is well known that in the classical Allen--Cahn equation, two bubbles first coalesce into one, and then this newly formed bubble shrinks and  eventually disappear.
In Fig.  \ref{Fig-AC1} with fixed $\lambda = 0.2$, we find that the dynamics of two bubbles  are similar to the behaviors in the classical Allen--Cahn equation. 
With $\ap$ decreasing, the merging and shrinking of the bubbles becomes much slower (cf. Fig. \ref{Fig-AC1} for $\ap = 1.9$ and $1$).  
When further reducing $\ap$ (e.g., $\ap = 1$),  the two bubbles never merge completely. 

In Fig. \ref{Fig-AC2} with fixed $\ap = 1.8$, the effects of the damping term on the dynamics of two bubbles are studied for various $\lambda$.
\begin{figure}[htb!]
\centerline{
\includegraphics[height=3.36cm,width=3.48cm]{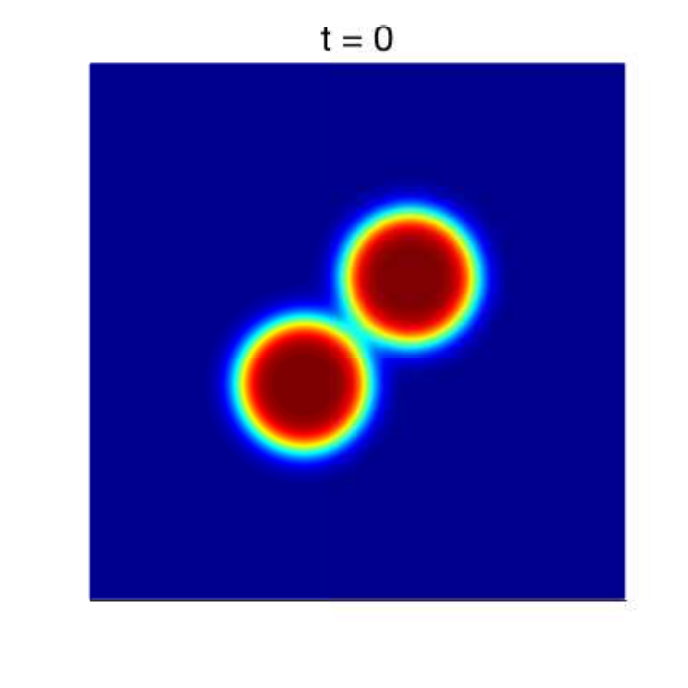}
\hspace{-4mm}
\includegraphics[height=3.36cm,width=3.480cm]{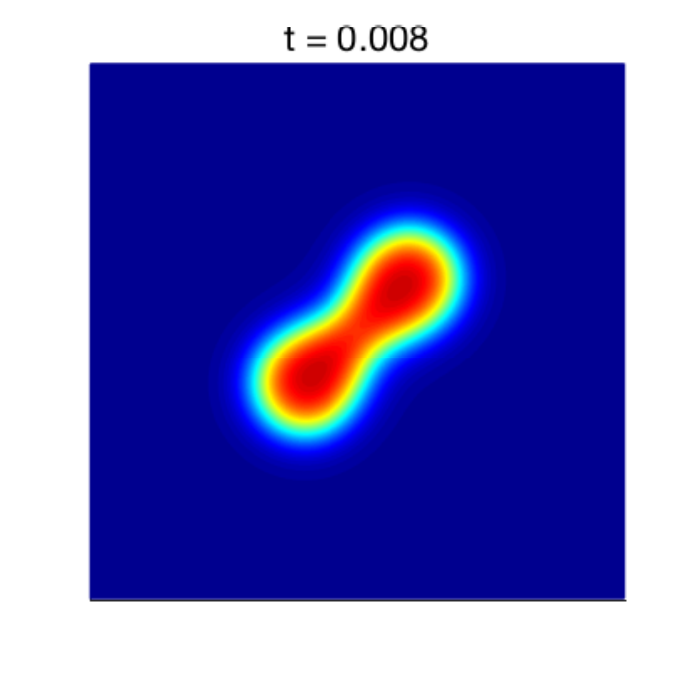}
\hspace{-4mm}
\includegraphics[height=3.36cm,width=3.480cm]{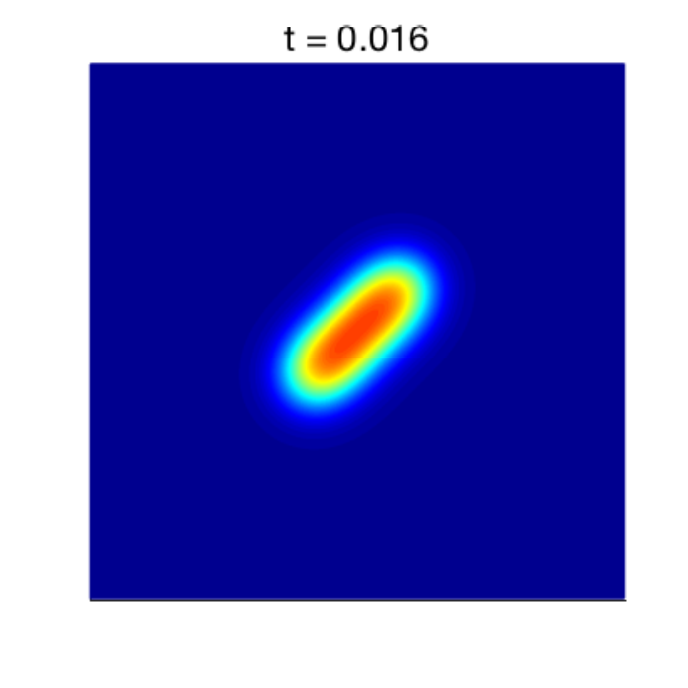}
\hspace{-4mm}
\includegraphics[height=3.36cm,width=3.480cm]{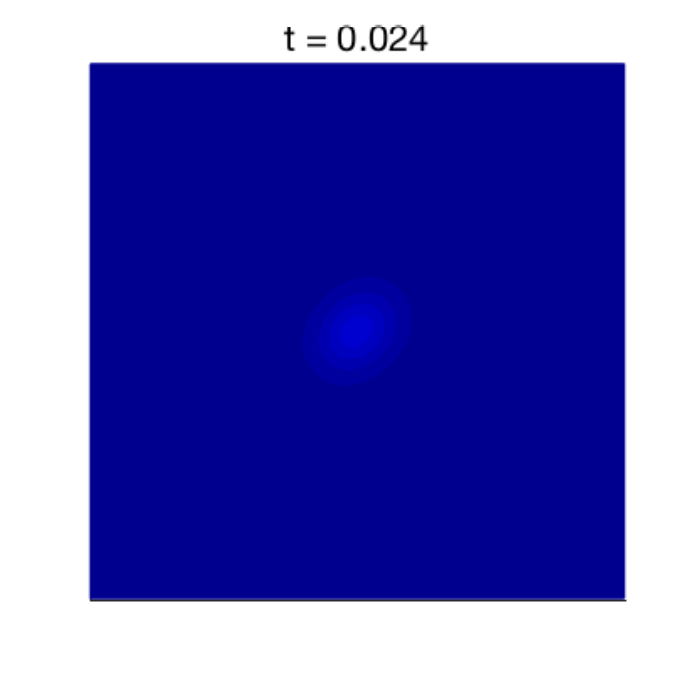}}
\vspace{-5mm}
\centerline{
\includegraphics[height=3.36cm,width=3.480cm]{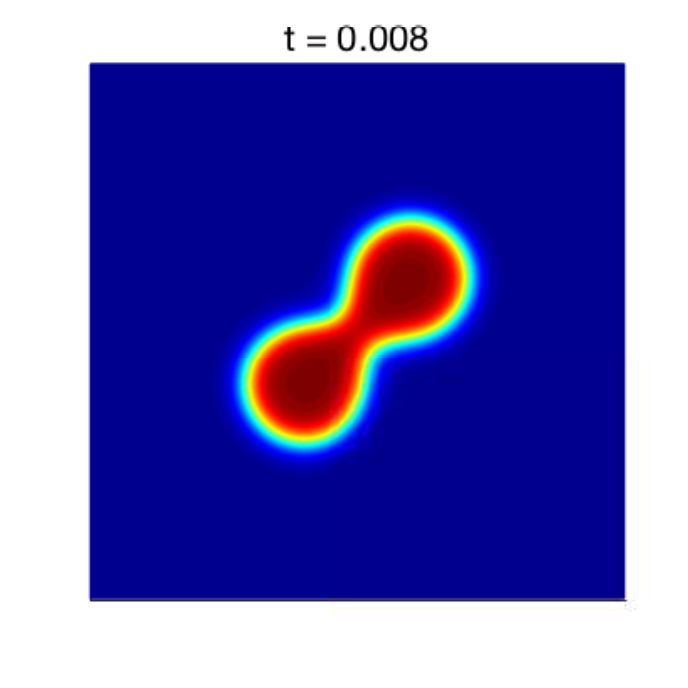}
\hspace{-4mm}
\includegraphics[height=3.36cm,width=3.480cm]{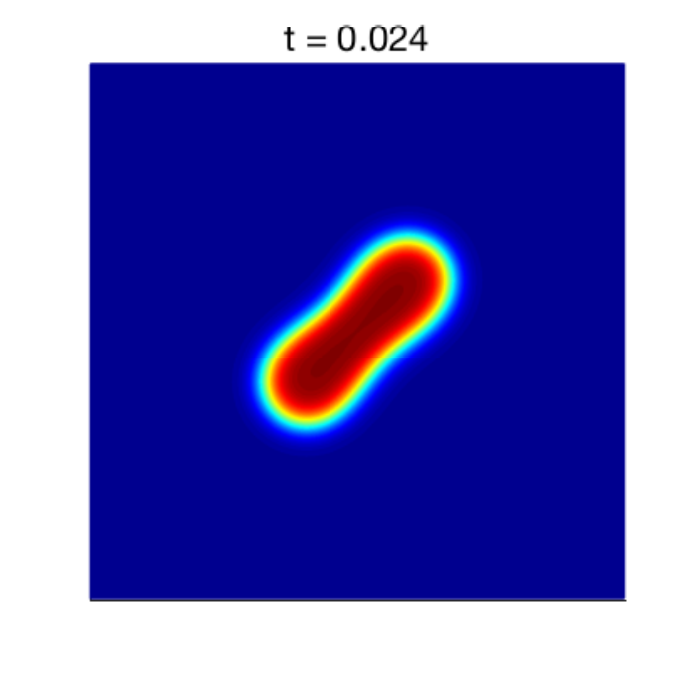}
\hspace{-4mm}
\includegraphics[height=3.36cm,width=3.480cm]{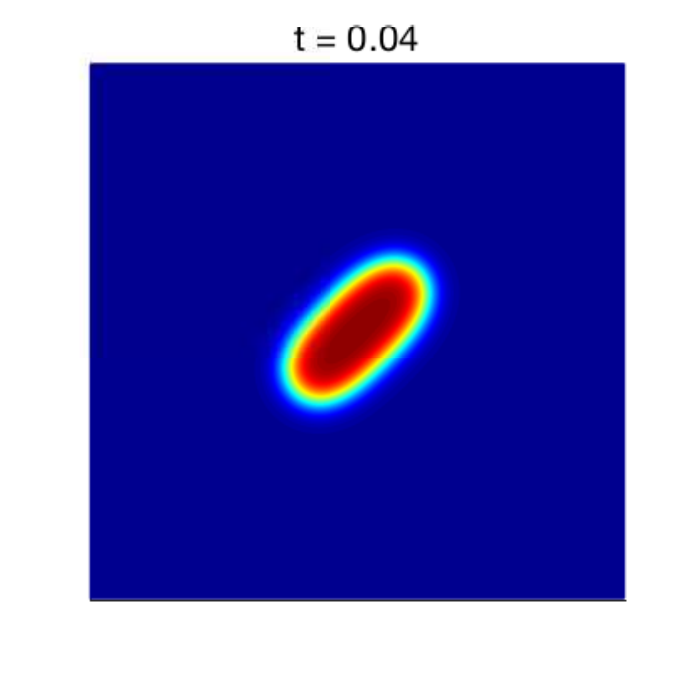}
\hspace{-4mm}
\includegraphics[height=3.36cm,width=3.480cm]{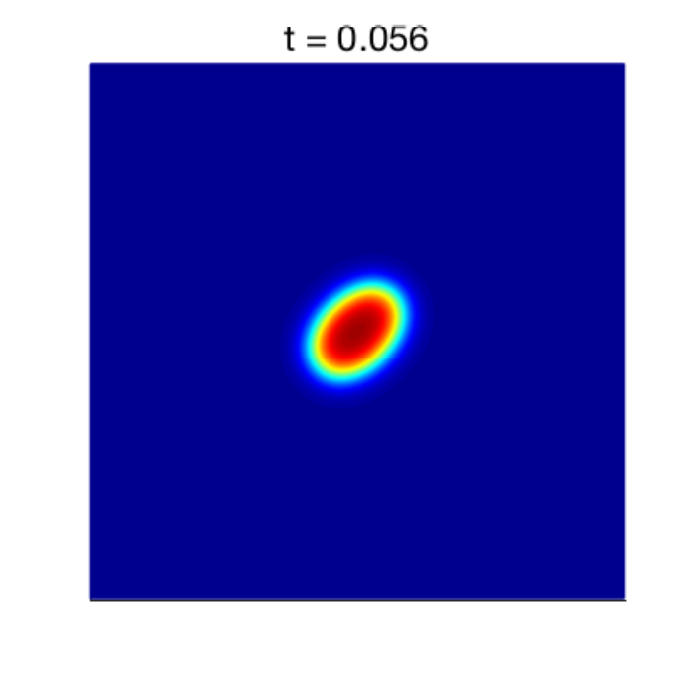}}
\vspace{-5mm}
\caption{Dynamics of the two kissing bubbles in the two-dimensional tempered fractional Allen--Cahn equation with $\varepsilon = 0.03$ and $\ap = 1.8$.  
From top to bottom:  $\lambda = 0, 5$.}\label{Fig-AC2}
\end{figure}
It shows that including the tempered term $e^{-\lambda|\bx - {\bf y}|}$ reduces the long-range interactions in the fractional Laplacian, so as to  slow down the evolution of two bubbles. 
Moreover, the larger the parameter $\lambda$, the slower the evolution, and consequently it takes much longer time for the bubbles to vanish for a larger $\lambda$. 

\subsection{Fractional Gray--Scott equations}
\label{section5-3}

Consider the  fractional Gray--Scott equations of the following form:
\bea\label{GS1}
&&u_t = -\kappa_1(-\Dt + \lambda)^{\fl{\ap}{2}} u - uv^2 + a (1-u), \\
\label{GS2}
&&v_t = -\kappa_2(-\Dt + \lambda)^{\fl{\ap}{2}} v + uv^2 - (a+b)v, 
\eea
where $u$ and $v$ denote the concentration of two species, respectively,  $\kappa_1$ and $\kappa_2$ are diffusion coefficients, $a$ is the feed rate, and $b$ is the depletion rate. 
Here, we take $\kappa_1 = 2 \times 10^{-5}$,  $\kappa_2 = 10^{-5}$, $a = 0.04$, and $b = 0.065$.
Let the domain $\Og = (0, 2.5)^d$.
The system (\ref{GS1})--(\ref{GS2}) admits a trivial solution: $(u, v)  \equiv (1, 0)$.  
We choose the initial condition as  $(u, v) = (1, 0)$ with a perturbation at this  center of the domain, i.e., $(u, v) = (0.5, 0.25)$ {for $\bx \in [1.201,1.299]^2$ in 2D and $\bx \in [1.152,1.348]^3$ in 3D.} 
The boundary conditions of (\ref{GS1})--(\ref{GS2}) are:  $u(\bx, t) = 1$ and $v(\bx, t) = 0$,  for $\bx \in\Og^c$ and $t \ge 0$.

Figs. \ref{Fig-GS1}--\ref{Fig-GS2} show  the pattern formation in the 2D  fractional Gray--Scott equation for various $\ap$ and $\lambda$. 
In our simulations, we choose  $N_x = N_y =  1024$, and time step $\tau = 0.5$. 
It shows that the pattern starts to emerge from the initial perturbation area, and if $\lambda$ is small, it quickly propagates to the boundary of the domain.  
In the classical Gray--Scott equation, a spot pattern was observed for this parameter regime (referred to as pattern-$\lambda$ in \cite{Pearson1993}). 
By contrast, the pattern formation in the fractional cases is more exotic, which significantly depends on the parameter $\ap$ and $\lambda$.  

\begin{figure}[htb!]
\centerline{
\includegraphics[height=3.36cm,width=3.60cm]{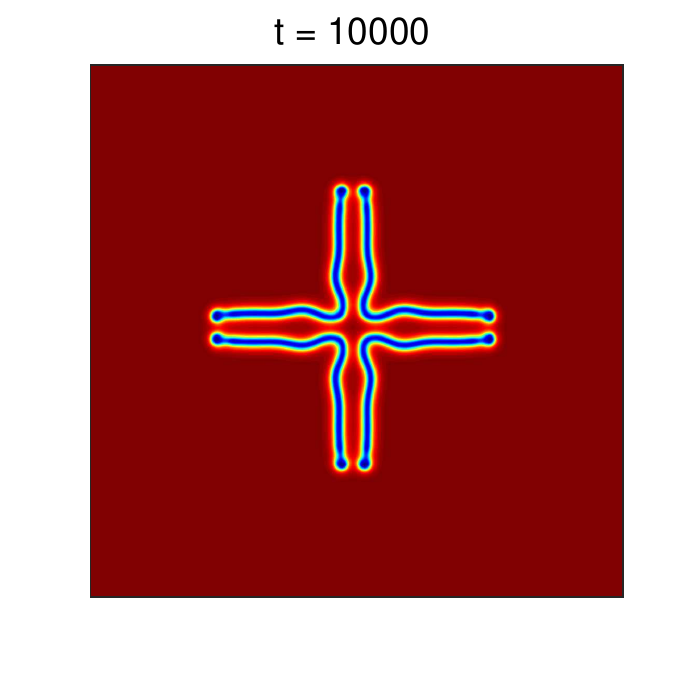}
\hspace{-6mm}
\includegraphics[height=3.36cm,width=3.6cm]{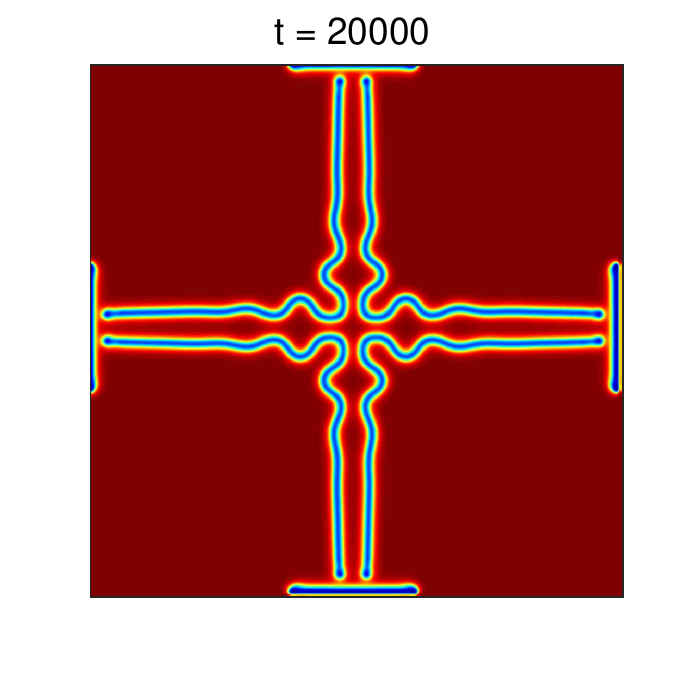}
\hspace{-6mm}
\includegraphics[height=3.36cm,width=3.6cm]{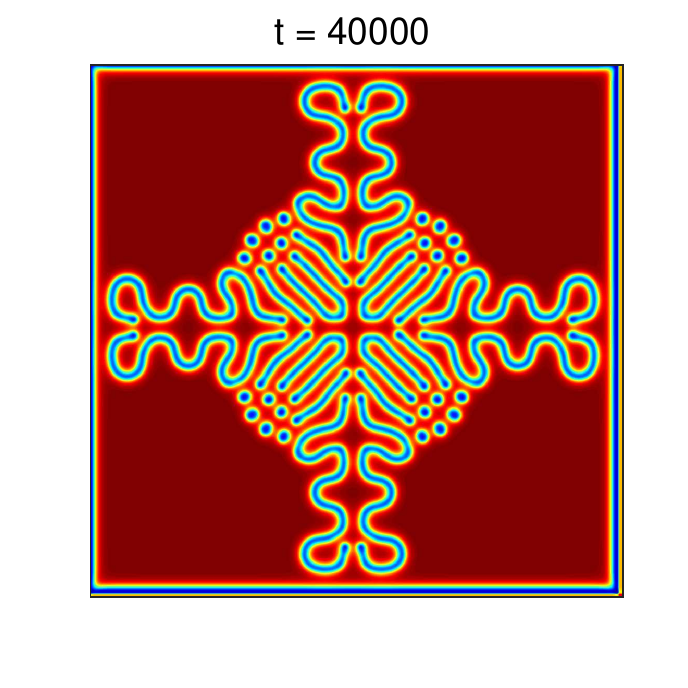}
\hspace{-6mm}
\includegraphics[height=3.36cm,width=3.6cm]{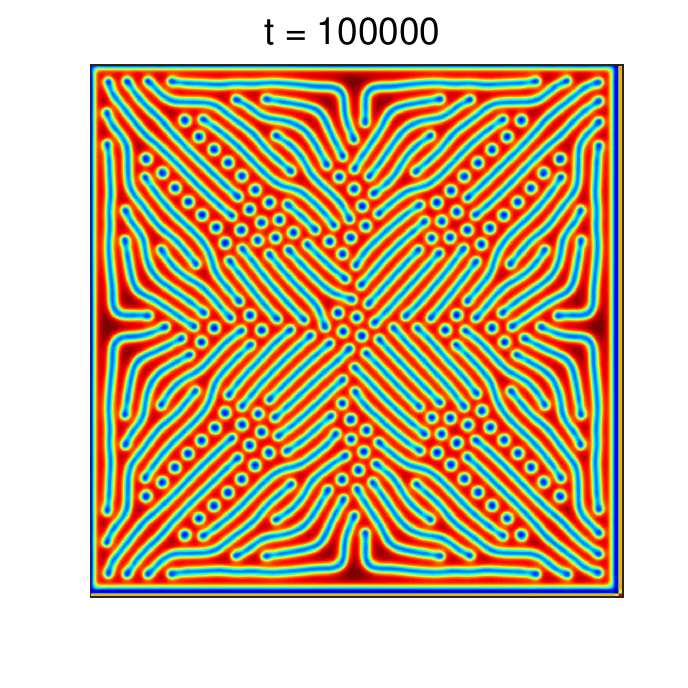}}
\vspace{-5mm}
\centerline{
\includegraphics[height=3.36cm,width=3.6cm]{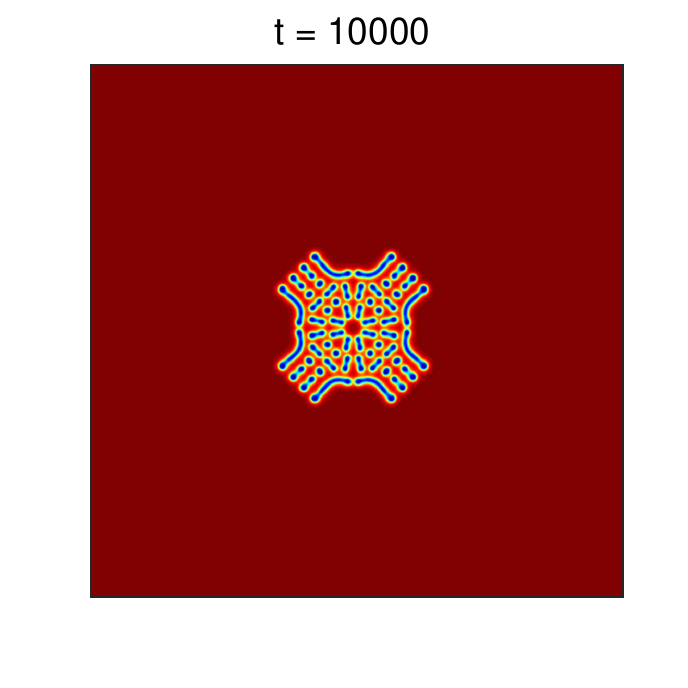}
\hspace{-6mm}
\includegraphics[height=3.36cm,width=3.6cm]{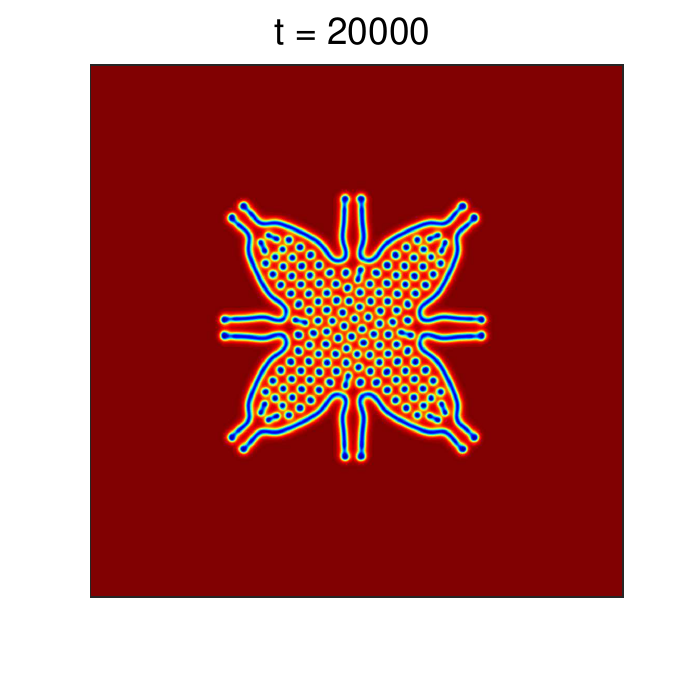}
\hspace{-6mm}
\includegraphics[height=3.36cm,width=3.6cm]{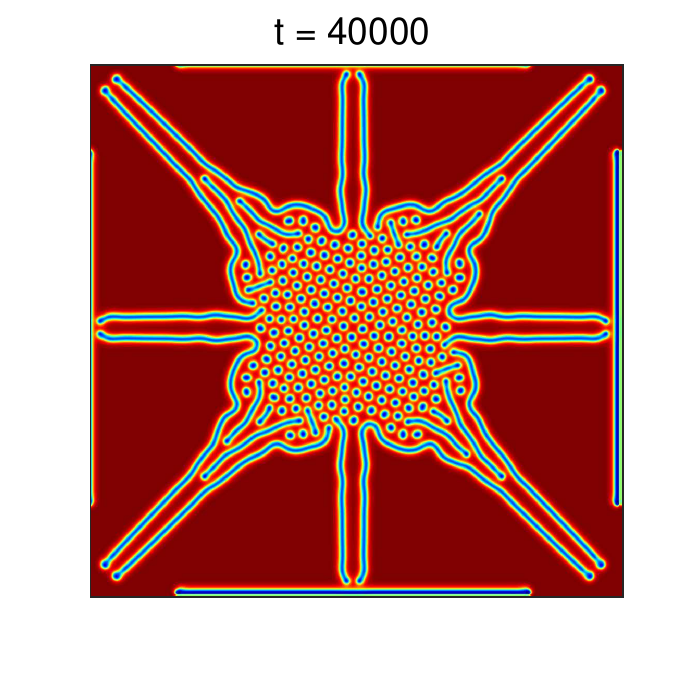}
\hspace{-6mm}
\includegraphics[height=3.36cm,width=3.6cm]{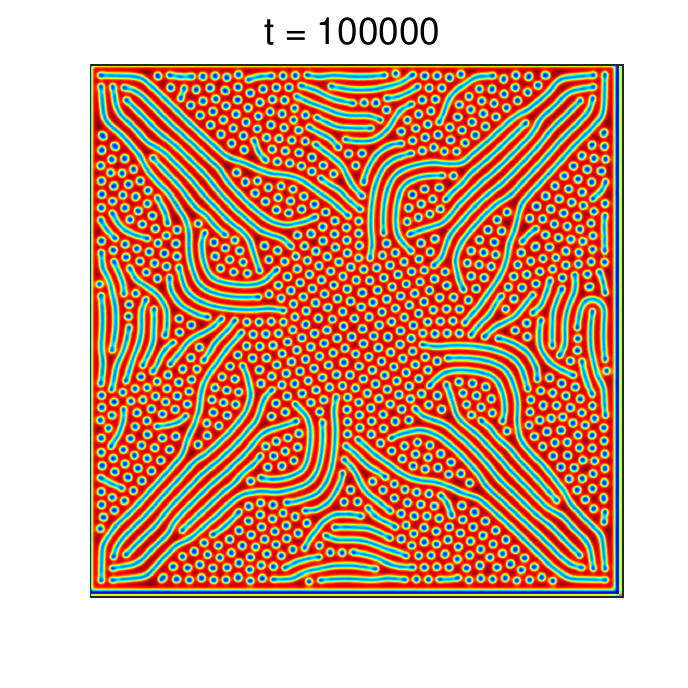}}
\vspace{-5mm}
\centerline{
\includegraphics[height=3.36cm,width=3.6cm]{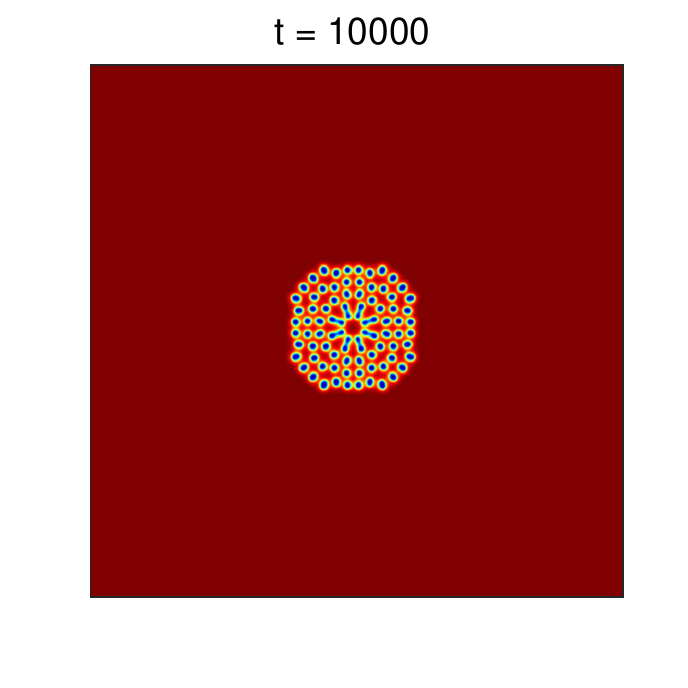}
\hspace{-6mm}
\includegraphics[height=3.36cm,width=3.6cm]{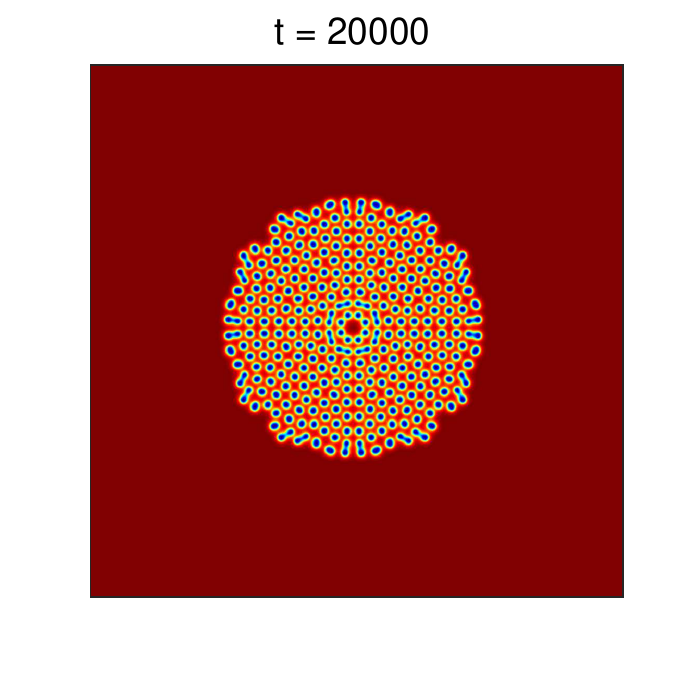}
\hspace{-6mm}
\includegraphics[height=3.36cm,width=3.6cm]{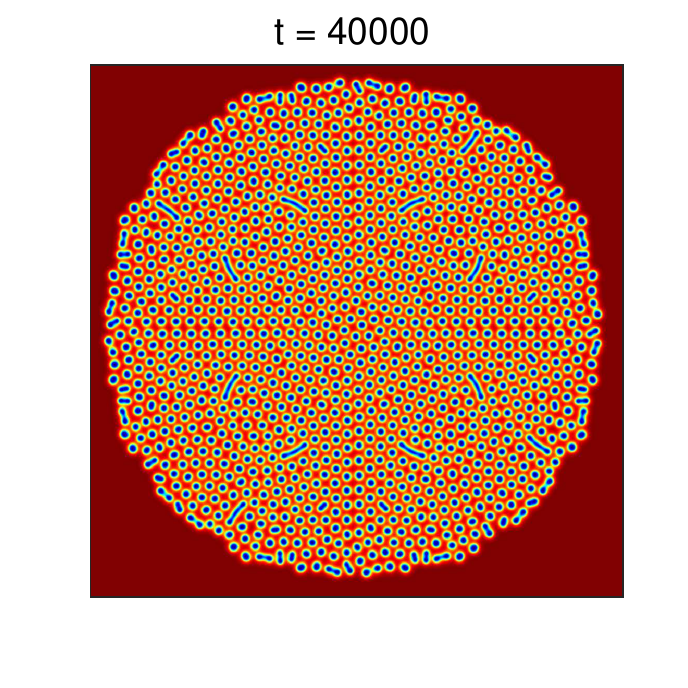}
\hspace{-6mm}
\includegraphics[height=3.36cm,width=3.6cm]{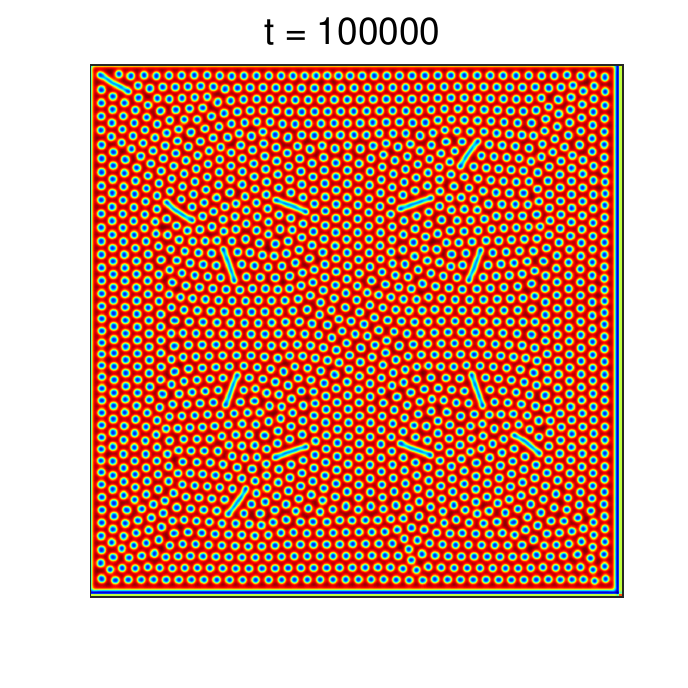}}
\vspace{-5mm}
\caption{Pattern formation in the 2D  Gray--Scott system with $\alpha = 1.8$ and $\lambda = 0, 5, 20$ (from top to bottom). }\label{Fig-GS1}
\end{figure}

In Fig. \ref{Fig-GS1} with fixed $\ap = 1.8$, a mixed pattern of spots and stripes is observed in the steady state of $\lambda = 0$.  
As $\lambda$ increases, the diffusion becomes slow, and the stripes quickly reduce. 
If $\lambda$ is large enough,  a pattern of spots is observed similar to the classical cases, but the structure is much finer due to the fractional dynamics.
In Fig. \ref{Fig-GS2}, we focus on the effects of the superdiffusive power $\ap$ by fixing $\lambda = 5$.  
For a larger $\ap$ (e.g. $\ap = 1.95$), a spot pattern is formed.
It is similar to the classical case in \cite{Pearson1993}, but the spot scale is much smaller. 
With $\ap$ decreasing, a pattern of mixed spots and stripes appears. 
The smaller the values of $\ap$, the more the stripes in the final pattern,  the finer the structure. 
These simulations show the effectiveness of our method in the study of pattern formations even with fine structures.
\begin{figure}[htb!]
\centerline{
\includegraphics[height=3.36cm,width=3.6cm]{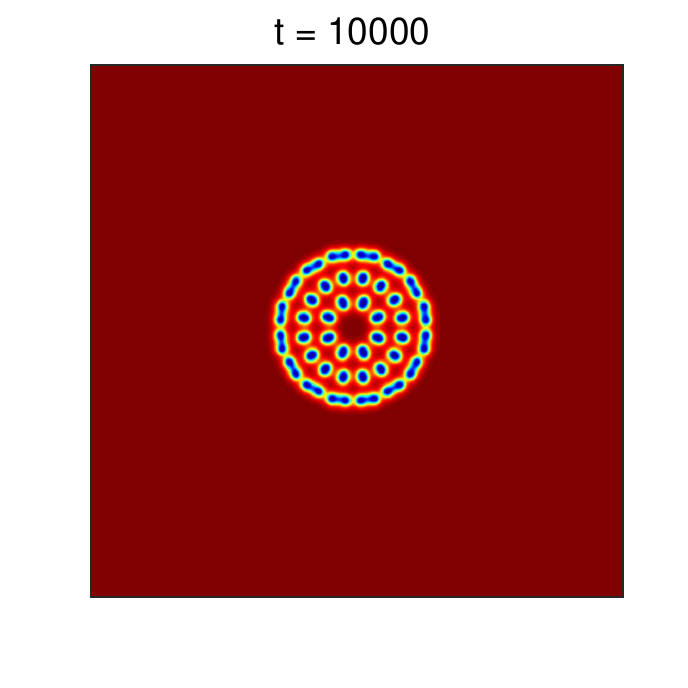}
\hspace{-6mm}
\includegraphics[height=3.36cm,width=3.6cm]{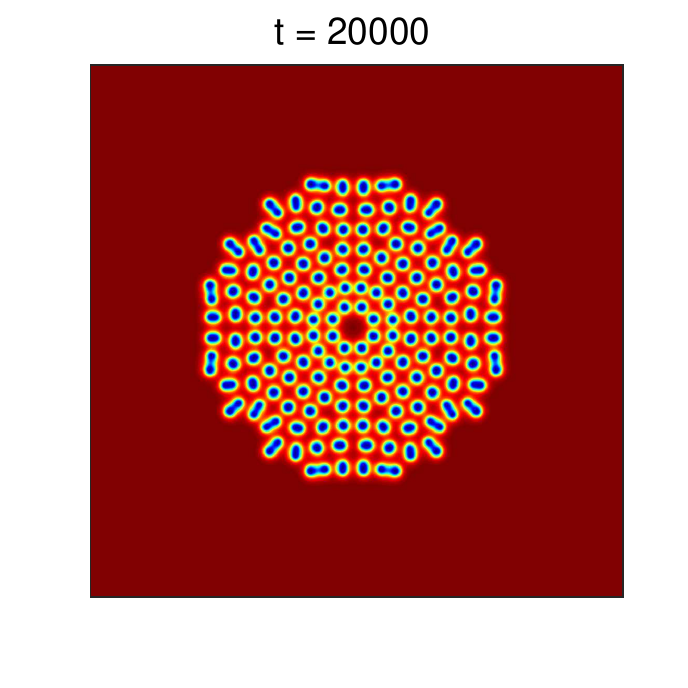}
\hspace{-6mm}
\includegraphics[height=3.36cm,width=3.6cm]{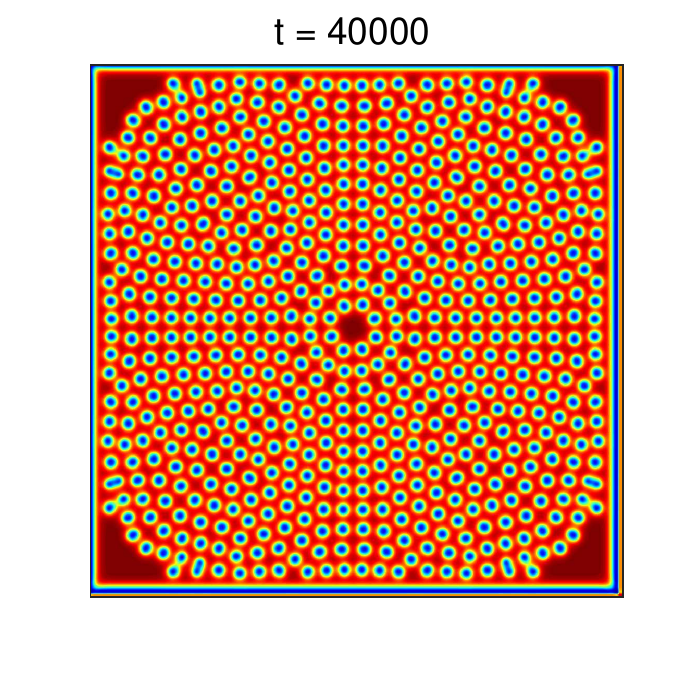}
\hspace{-6mm}
\includegraphics[height=3.36cm,width=3.6cm]{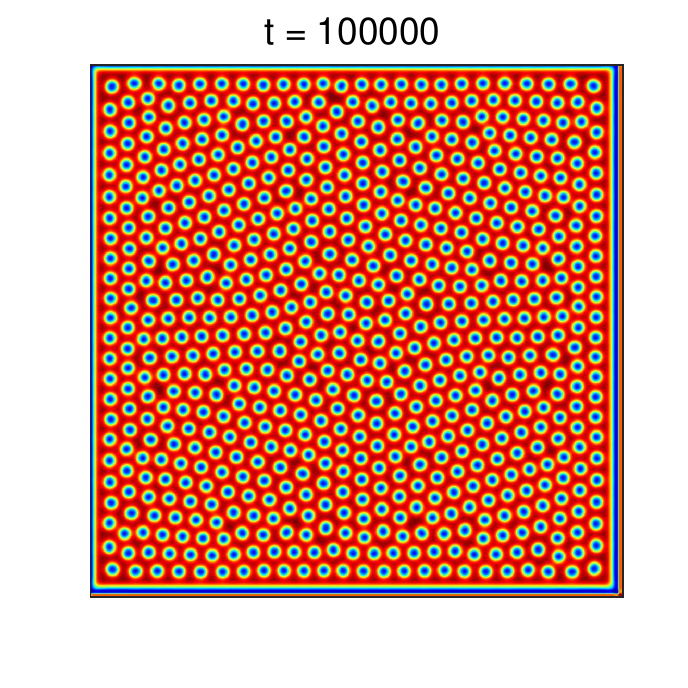}}
\vspace{-5mm}
\centerline{
\includegraphics[height=3.36cm,width=3.6cm]{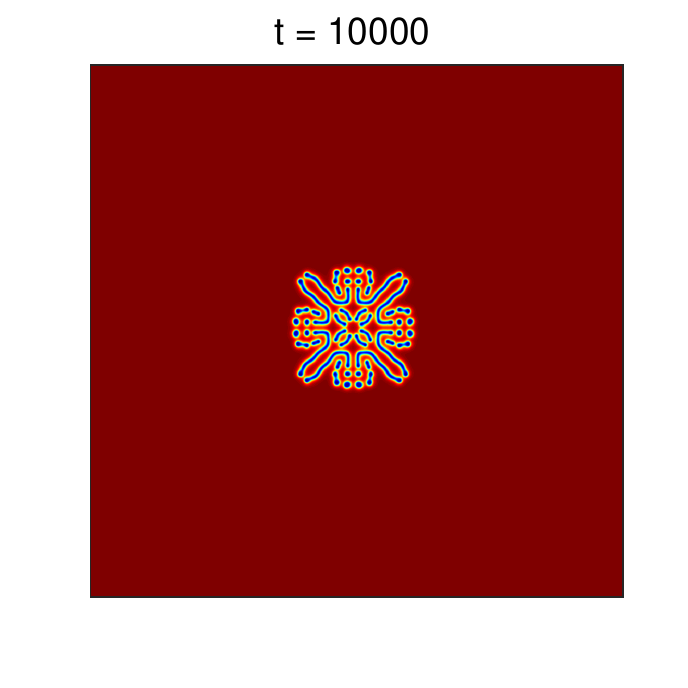}
\hspace{-6mm}
\includegraphics[height=3.36cm,width=3.6cm]{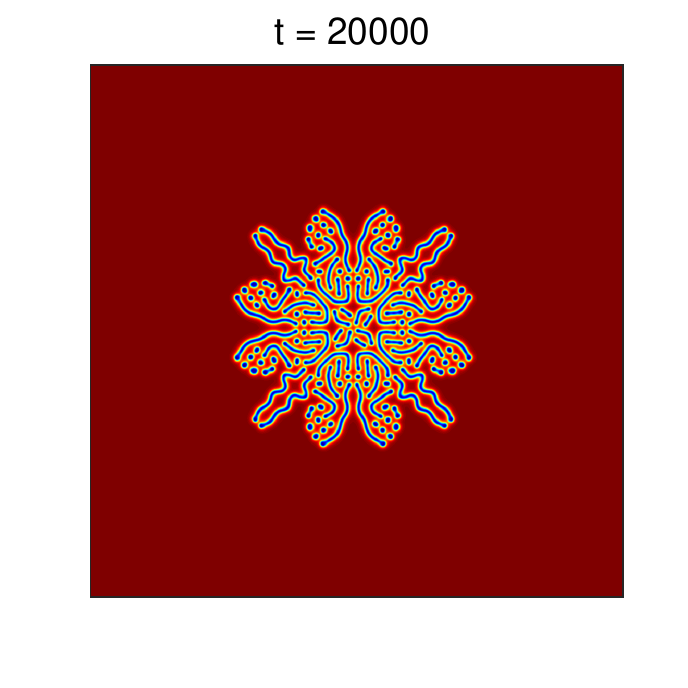}
\hspace{-6mm}
\includegraphics[height=3.36cm,width=3.6cm]{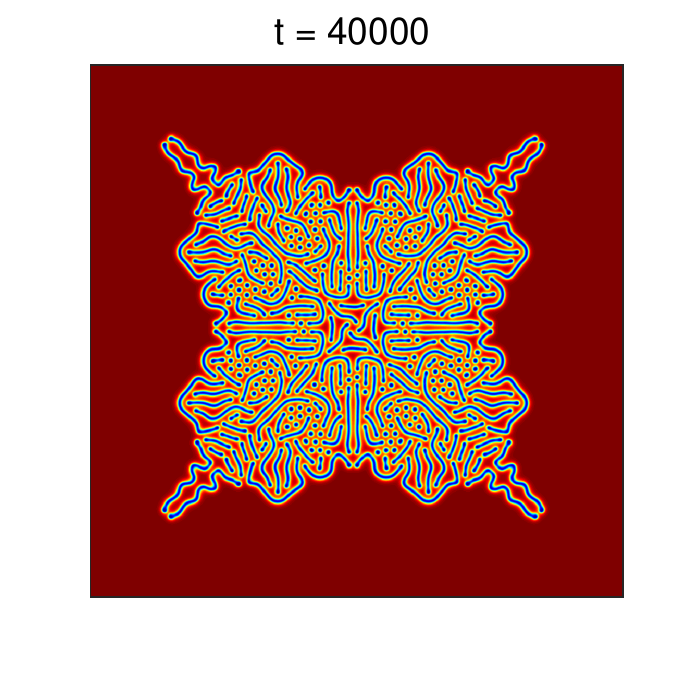}
\hspace{-6mm}
\includegraphics[height=3.36cm,width=3.6cm]{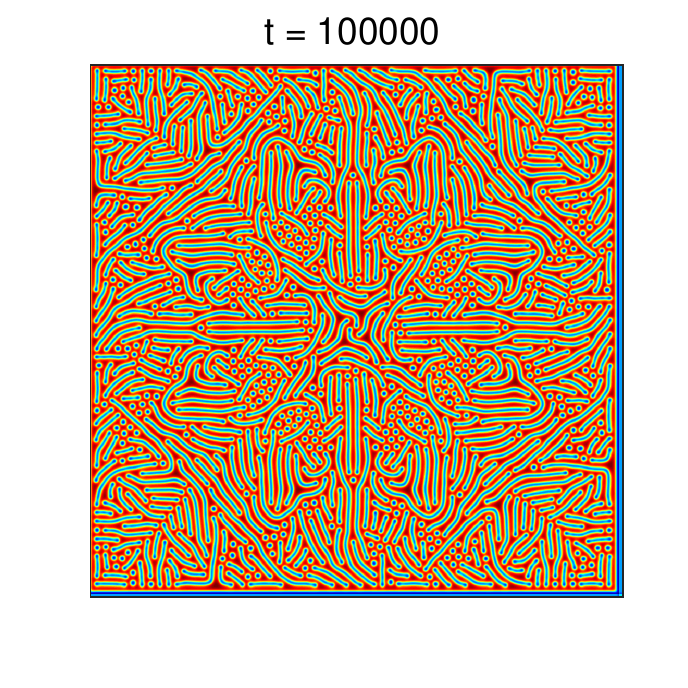}}
\vspace{-5mm}
\caption{Pattern formation in the 2D  Gray--Scott system with $\lambda= 5$, and $\alpha$ = 1.95 (top), 1.7 (bottom).}\label{Fig-GS2}
\end{figure}

\begin{figure}[htb!]
\centerline{
\includegraphics[height=4.45cm,width=4.10cm]{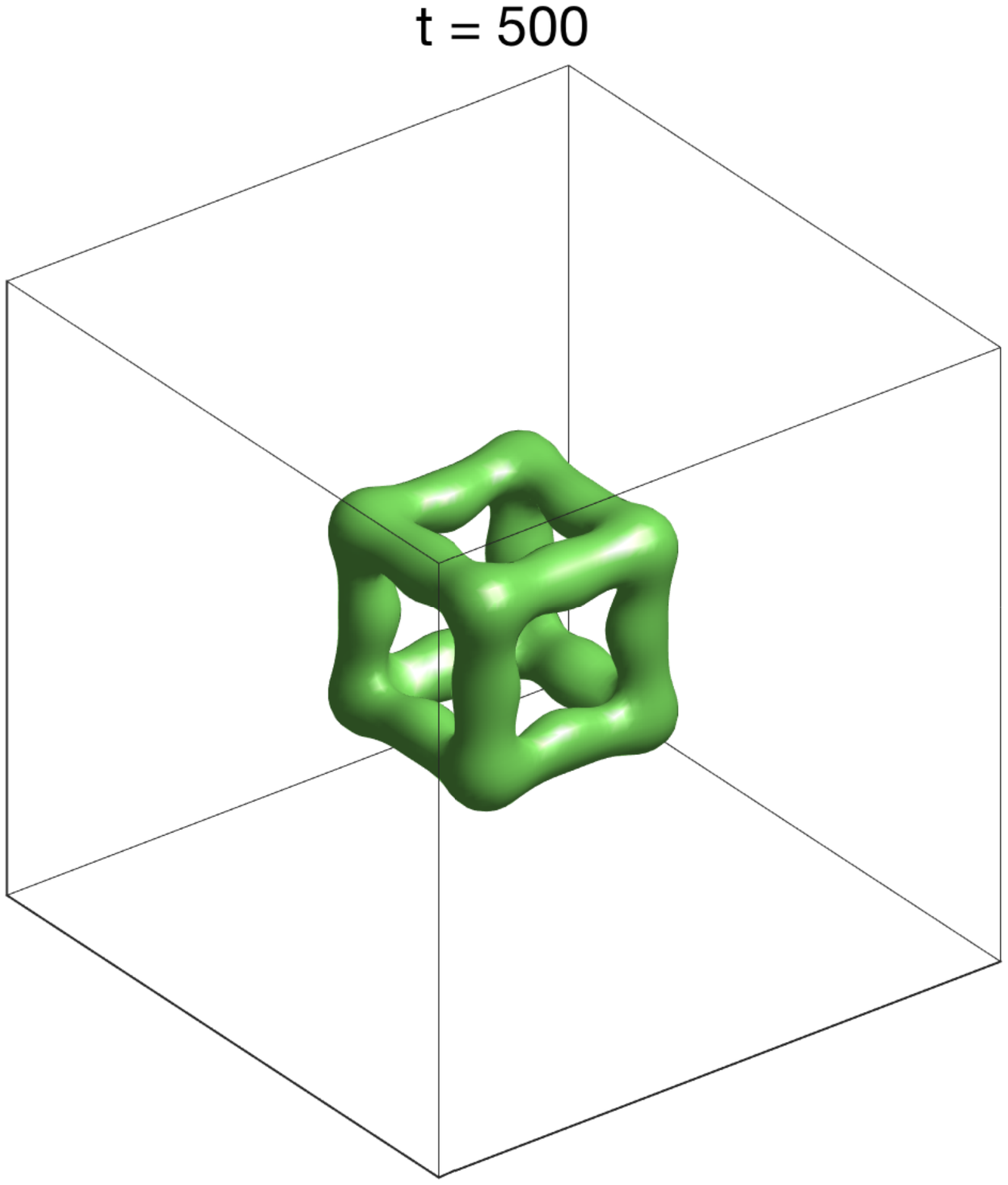}
\hspace{-5mm}
\includegraphics[height=4.45cm,width=4.10cm]{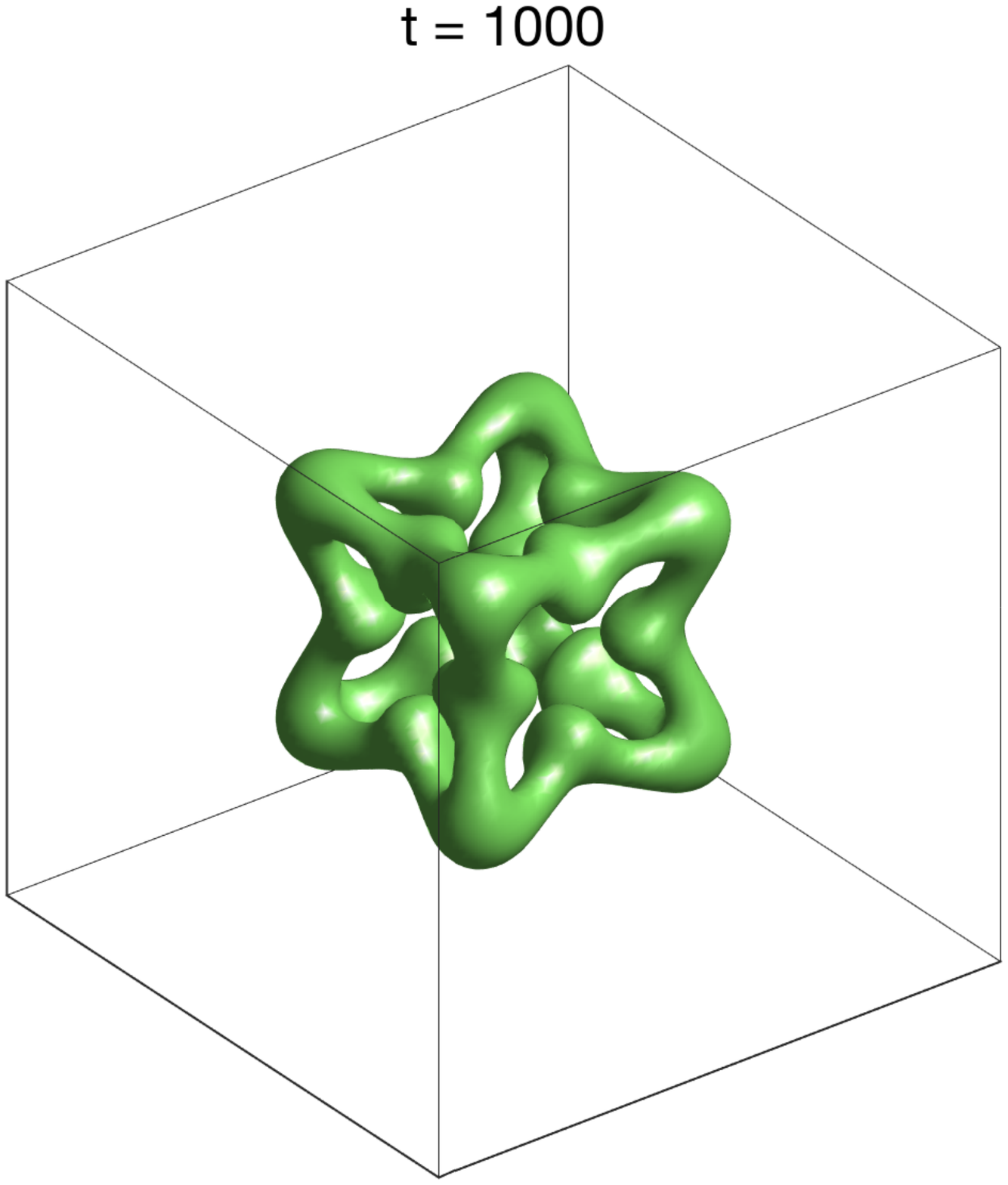}
\hspace{-5mm}
\includegraphics[height=4.45cm,width=4.10cm]{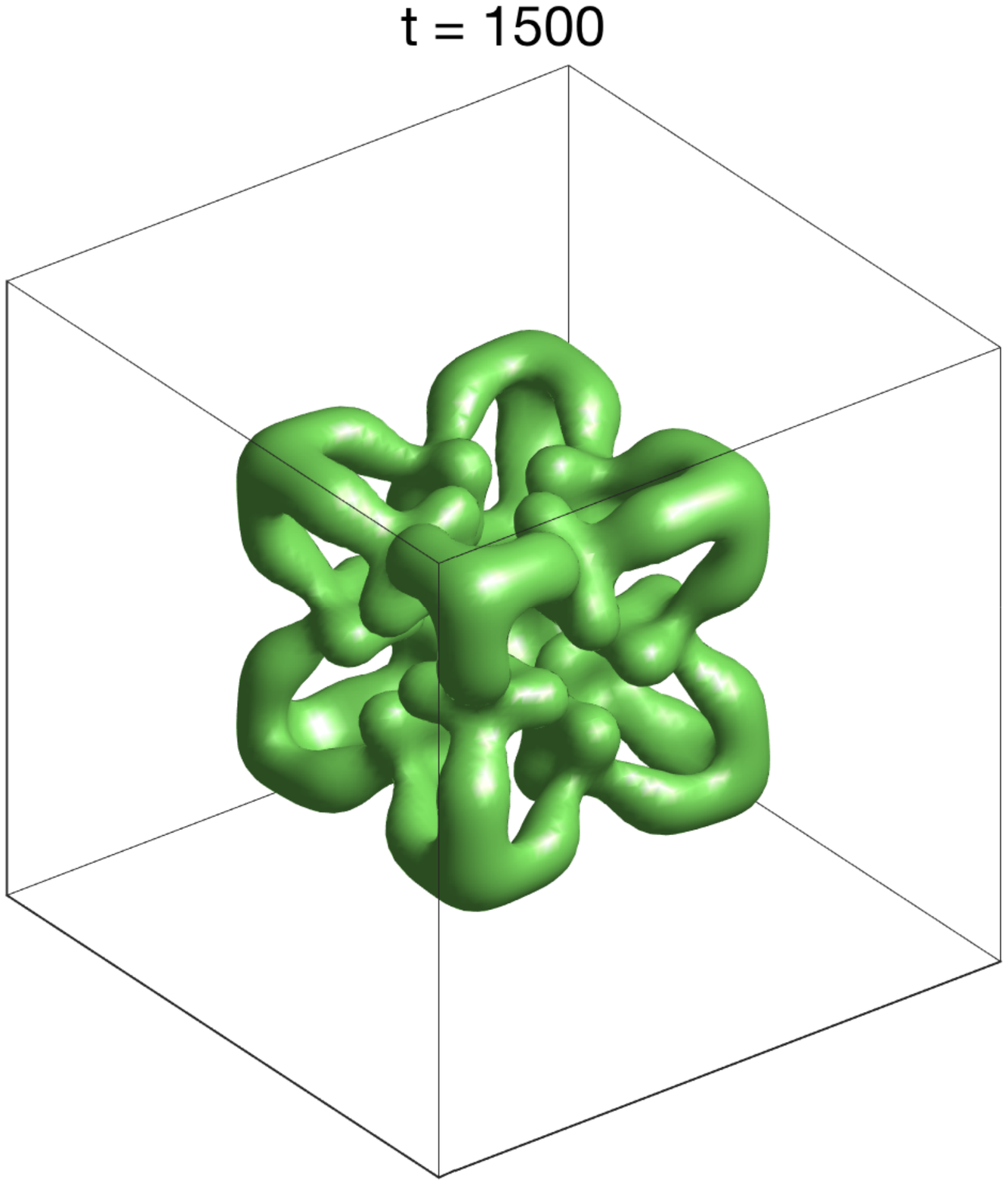}
\hspace{-5mm}
\includegraphics[height=4.45cm,width=4.10cm]{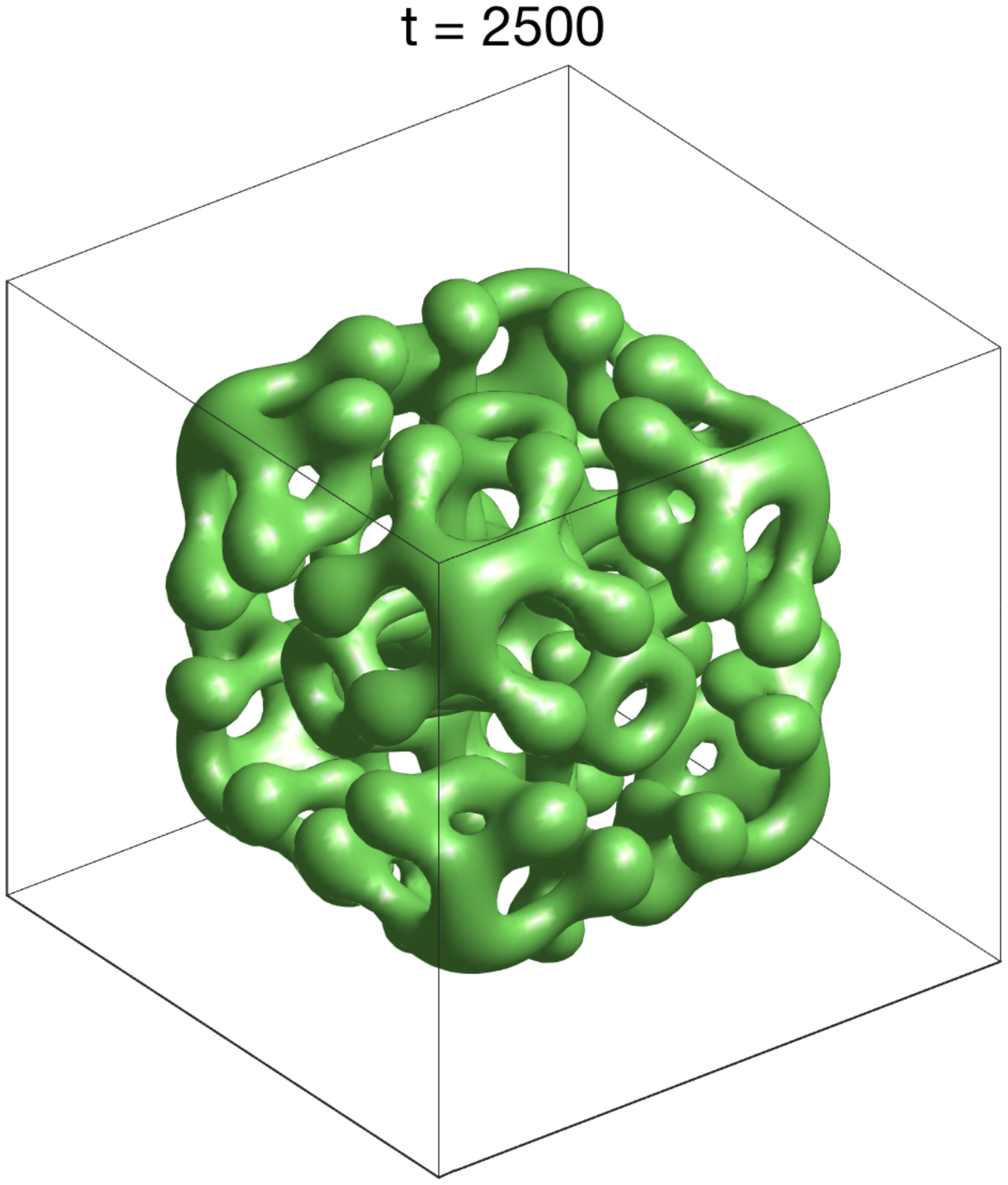}}
\vspace{-8mm}
\caption{Isosurface plots of $u = -0.5$ in the 3D  Gray--Scott  equation with $\alpha = 1.9$ and $\lambda = 0$. } \label{Fig6}
\end{figure}

Next, we further demonstrate the effectiveness of our method by studying the pattern formation in the 3D fractional Gray--Scott equations. 
To the best of our knowledge, so far no numerical results can be found  on the fractional PDEs with the 3D tempered fractional Laplacian, due to the considerable numerical challenges in discretizing the operator. 
Fig. \ref{Fig6} shows the isosurface plots of the component $u$ at different time $t$, where $\ap = 1.9$. 
For a better resolution, only the region of $[0.9,1.6]^3$ is displayed. 
It shows that the 3D fractional Gray-Scott model exhibits more exotic patterns than the 2D cases. 
Comparing to the 2D cases, the computations of 3D systems become more challenging, and our method and fast algorithms  are effective in the simulations. 

\section{Conclusions}
\label{section6}
\setcounter{equation}{0}

We proposed simple and accurate finite difference methods to discretize the $d$-dimensional ($d \ge 1$) tempered fractional Laplacian and provided detailed numerical analysis on their local truncation errors. 
Our analysis not only provides a sharp consistency conditions of our methods but also gives the accuracy  under various smoothness conditions. 
We showed that  the accuracy of our methods can be improved to ${\mathcal O}(h^2)$, independent of the fractional power $\ap$ and damping constant $\lambda$. 
Comparing to other existing methods \cite{Zhang0017, Sun0018}, our method can achieve higher accuracy with low regularity requirements, and are simpler to implement. 
The {multilevel} Toeplitz {stiffness} matrix enables us to develop fast algorithms for the efficient matrix-vector products with computational complexity of order $\mathcal{O}(M\log M)$ and memory storage $\mathcal{O}(M)$ with $M$ the total number of unknowns in space.

Extensive numerical examples were provided to verify the effectiveness of our methods. 
We numerically studied the accuracy of our method in solving tempered fractional Poisson problems and found that to achieve the second order of accuracy, it  only requires the solution $u \in C^{1,1}(\bar{\Og})$ for $\ap \in (0, 2)$. 
Moreover, extensive studies showed that if the solution $u \in C^{p, s}(\bar{\Og})$ for $p = 0, 1$ and $0 \le s \le 1$, our methods have the accuracy of ${\mathcal O}(h^{p+s})$ in solving fractional Poisson problems.
Finally,  the tempered effects were studied in the fractional Allen--Cahn equation and  Gray--Scott system. 
For example, the pattern formation in the tempered  Gray--Scott equation reveals the features of both classical and fractional Laplacian. 
More studies will be carried out in the future to further understand the coupling effects of the normal and anomalous diffusion in  the tempered fractional problems.

\bigskip
{\bf Acknowledgements}. 
This work was supported by the US National Science Foundation under grant number DMS-1620465.
\bibliographystyle{plain}

\begin{thebibliography}{10}

\bibitem{Acosta2017}
G.~Acosta and J.~P. Borthagaray.
\newblock A fractional {L}aplace equation: Regularity of solutions and finite
  element approximations.
\newblock {\em SIAM J. Numer. Anal.}, 55:472--495, 2017.

\bibitem{Baeumer2010}
B.~Baeumer and M.~M. Meerschaert.
\newblock Tempered stable {L}{\'e}vy motion and transient super-diffusion.
\newblock {\em J. Comput. Appl. Math.}, 233:2438--2448, 2010.

\bibitem{Carr2002}
P.~Carr, H.~Geman, D.~B. Madan, and M.~Yor.
\newblock The fine structure of asset returns: an empirical investigation.
\newblock {\em J. Bus.}, 75:303--325, 2002.

\bibitem{Carr2003}
P.~Carr, H.~Geman, D.~B. Madan, and M.~Yor.
\newblock Stochastic volatility for l{\'e}vy processes.
\newblock {\em Math. Finance}, 13:345--382, 2003.

\bibitem{Cartea2007}
\.{A}. Cartea and D.~del Castillo-Negrete.
\newblock Fractional diffusion models of option prices in markets with jumps.
\newblock {\em Physica A}, 374:749--763, 2007.

\bibitem{Chechkin2005}
A.~V. Chechkin, V.~Yu. Gonchar, J.~Klafter, and R.~Metzler.
\newblock Natural cutoff in {L}\'evy flights caused by dissipative
  nonlinearity.
\newblock {\em Phys. Rev. E}, 72:010101, 2005.

\bibitem{Dubrulle1998}
B.~Dubrulle and J.-P. Laval.
\newblock Truncated {L}\'evy laws and 2{D} turbulence.
\newblock {\em Eur. Phys. J. B}, 4:143--146, 1998.

\bibitem{Duo2018}
S.~Duo, H.~W. van Wyk, and Y.~Zhang.
\newblock A novel and accurate finite difference method for the fractional
  {L}aplacian and the fractional {P}oisson problem.
\newblock {\em J. Comput. Phys.}, 355:233--252, 2018.

\bibitem{Duo2015}
S.~Duo and Y.~Zhang.
\newblock Computing the ground and first excited states of the fractional
  {S}chr\"odinger equation in an infinite potential well.
\newblock {\em Commun. Comput. Phys.}, 18:321--350, 2015.

\bibitem{Duo2019}
S.~Duo and Y.~Zhang.
\newblock  Accurate numerical methods for two and three dimensional integral fractional Laplacian with applications. 
\newblock {\em Comput. Method Appl. Mech. Eng. }, 355:639-662, 2019.

\bibitem{Duo2016}
S.~Duo and Y.~Zhang.
\newblock Mass-conservative Fourier spectral methods for solving the fractional
  nonlinear schr\"{o}dinger equation.
\newblock {\em Comput. Math. Appl.}, 77:2257--2271, 2016.

\bibitem{Javanainen2013}
M.~Javanainen, H.~Hammaren, L.~Monticelli, J.-H. Jeon, M.~S. Miettinen,
  H.~Martinez-Seara, R.~Metzler, and I.~Vattulainen.
\newblock Anomalous and normal diffusion of proteins and lipids in crowded
  lipid membranes.
\newblock {\em Faraday Discuss.}, 161:397--417, 2013.

\bibitem{Khan2013}
A.~R. Khan, J.~Pe{\v c}ari{\'c}, and M.~Praljak.
\newblock Weighted {M}ontgomery's identities for higher order differentiable
  functions of two variables.
\newblock {\em Rev. Anal. Num\'er. Th\'eor. Approx.}, 42:49--71, 2013.

\bibitem{Kirkpatrick2016}
K.~Kirkpatrick and Y.~Zhang.
\newblock Fractional {S}chr\"odinger dynamics and decoherence.
\newblock {\em Phys. D}, 332:41--54, 2016.

\bibitem{Koponen1995}
I.~Koponen.
\newblock Analytic approach to the problem of convergence of truncated
  {L}{\'e}vy flights towards the {G}aussian stochastic process.
\newblock {\em Phys. Rev. E}, 52:1197--1199, 1995.

\bibitem{Laskin2000}
N.~Laskin.
\newblock Fractional quantum mechanics and {L}\'evy path integrals.
\newblock {\em Phys. Lett. A}, 268:298--305, 2000.

\bibitem{Mantegna1994}
R.~N. Mantegna and H.~E. Stanley.
\newblock Stochastic process with ultraslow convergence to a {G}aussian: the
  truncated {L}\'evy flight.
\newblock {\em Phys. Rev. Lett.}, 73:2946--2949, 1994.

\bibitem{Meerschaert2008}
M.~M. Meerschaert, Y.~Zhang, and B.~Baeumer.
\newblock Tempered anomalous diffusion in heterogeneous systems.
\newblock {\em Geophys. Res. Lett.}, 35:L17403, 2008.

\bibitem{Minden0018}
V.~Minden and L.~Ying.
\newblock A simple solver for the fractional laplacian in multiple dimensions.
\newblock {\em arXiv:1802.03770}.

\bibitem{Pearson1993}
J.~E. Pearson.
\newblock Complex patterns in a simple system.
\newblock {\em Science}, 261:189--192, 1993.

\bibitem{Rosinski2007}
J.~Rosi{\'n}ski.
\newblock Tempering stable processes.
\newblock {\em Stochastic Process. Appl.}, 117:677--707, 2007.

\bibitem{Sokolov2004}
I.~M. Sokolov, A.~V. Chechkin, and J.~Klafter.
\newblock Fractional diffusion equation for a power-law-truncated {L}\'evy
  process.
\newblock {\em Physica A}, 336:245251, 2004.

\bibitem{Sun0018}
J.~Sun, D.~Nie, and W.~Deng.
\newblock Algorithm implementation and numerical analysis for the
  two-dimensional tempered fractional {L}aplacian.
\newblock preprint, 2018.

\bibitem{Tang0019}
T.~Tang, L.~Wang, H.~Yuan, and T.~Zhou.
\newblock Rational spectral methods for PDEs involving fractional 
Laplacian in unbounded domains.
\newblock {\em arXiv:1905.02476}.

\bibitem{Zhang2012}
Y.~Zhang, M.~M. Meerschaert, and A.~I. Packman.
\newblock Linking fluvial bed sediment transport across scales.
\newblock {\em Geophys.Res.Lett.}, 39:L20404, 2012.

\bibitem{Zhang0017-2}
Z.~Zhang, W.~Deng, and H.~Fan.
\newblock Finite difference schemes for the tempered fractional {L}aplacian.
\newblock {\em Numer. Math. Theor. Meth. Appl.}, 12:492--516, 2019.

\bibitem{Zhang0017}
Z.~Zhang, W.~Deng, and G.~E. Karniadakis.
\newblock A {R}iesz basis {G}alerkin method for the tempered fractional
  {L}aplacian.
\newblock {\em SIAM J. Numer. Anal.}, 56:3010--3039, 2018.

\end{thebibliography}

\end{document}